\documentclass[fleqn]{scrartcl}
\setkomafont{disposition}{\normalfont\bfseries}
\makeatletter\let\TTTemp\cap\makeatother
\usepackage[greek,english]{babel}
\usepackage{teubner}
\makeatletter\let\cap\TTTemp\makeatother
\usepackage{eurosym}
\usepackage{amssymb}
\usepackage{amsmath}
\usepackage{amsfonts}
\usepackage{lgreek}
\usepackage{graphicx}
\usepackage{textcomp}
\usepackage{cancel}
\usepackage{soul}
\usepackage{cjhebrew}
\usepackage{mathtools}
\usepackage{rotating}
\usepackage{esvect}
\usepackage{natbib}
\usepackage{MnSymbol}
\usepackage{centernot}
\usepackage{cancel}
\usepackage{scalerel}
\usepackage{hyperref}
\usepackage{pict2e}
\usepackage[nottoc,numbib]{tocbibind}
\usepackage{bbding}
\usepackage{mathtools}
\usepackage{amsthm}
\swapnumbers

\makeatletter
\DeclareRobustCommand{\cvdash}{%
  \mathrel{\mathpalette\cvd@sh\relax}
}

\newcommand{\cvd@sh}[2]{%
  \sbox\z@{$\m@th#1\vdash$}%
  \setlength{\unitlength}{1.1\wd\z@}%
  \begin{picture}(1,0.75)
  \roundcap\roundjoin
  \polyline(0.125,0)(0.4,0)(0.4,0.75)(0.125,0.75)
  \polyline(0.4,0.375)(0.925,0.375)
  \end{picture}%
}

\DeclareRobustCommand{\cVdash}{%
  \mathrel{\mathpalette\cVd@sh\relax}
}
\newcommand{\cVd@sh}[2]{%
  \sbox\z@{$\m@th#1\vdash$}%
  \setlength{\unitlength}{1.1\wd\z@}%
  \begin{picture}(1,0.75)
  \roundcap\roundjoin
  \polyline(0.125,0)(0.4,0)(0.4,0.75)(0.125,0.75)(0.125,0)
  \polyline(0.4,0.375)(0.925,0.375)
  \end{picture}%
}

\DeclareRobustCommand{\cVvdash}{%
  \mathrel{\mathpalette\cVvd@sh\relax}%
}

\newcommand{\cVvd@sh}[2]{%
  \sbox\z@{$\m@th#1\vdash$}%
  \setlength{\unitlength}{1.1\wd\z@}%
  \begin{picture}(1,0.75)
  \roundcap\roundjoin
  \polyline(0.125,0)(0.55,0)(0.55,0.75)(0.125,0.75)(0.125,0)
  \polyline(.3375,0)(.3375,0.75)
  \polyline(0.6,0.375)(0.925,0.375)
  \end{picture}%
}
\makeatother

\allowdisplaybreaks

\newbox\gnBoxA
\newdimen\gnCornerHgt
\setbox\gnBoxA=\hbox{$\ulcorner$}
\global\gnCornerHgt=\ht\gnBoxA
\newdimen\gnArgHgt
\def\Godelnum #1{%
\setbox\gnBoxA=\hbox{$#1$}%
\gnArgHgt=\ht\gnBoxA%
\ifnum     \gnArgHgt<\gnCornerHgt \gnArgHgt=0pt%
\else \advance \gnArgHgt by -\gnCornerHgt%
\fi \raise\gnArgHgt\hbox{$\ulcorner$} \box\gnBoxA %
\raise\gnArgHgt\hbox{$\urcorner$}}

\newcommand{\bleq}{\mathrel{\mathpalette\bleqinn\relax}}
\newcommand{\bleqinn}[2]{%
  \ooalign{%
    \raisebox{0.04ex}{\scalebox{1.24}[1.15]{$#1\blacktriangleleft$}}\cr
    $#1\leq$\cr
  }%
}

\newcommand{\blt}{\mathrel{\mathpalette\bltinn\relax}} \newcommand{\bltinn}[2]{ \ooalign{ \raisebox{0.04ex}{\scalebox{1.23}[1.22]{$#1\blacktriangleleft$}}\cr $#1$\cr } }

\newcommand\fat[1]{\ThisStyle{\hstretch{1.4}{\ooalign{%
  \kern.46pt$\SavedStyle#1$\cr\kern.33pt$\SavedStyle#1$\cr%
  \kern.2pt$\SavedStyle#1$\cr$\SavedStyle#1$}}}}
\def\fvee{\mathbin{\fat{\vee}}}

\newcommand*{\aoverb}[2]{\ensuremath{\genfrac{}{}{0pt}{}{#1}{#2}}}

\renewcommand*{\koppa}{\ensuremath{\text{\textgreek\coppa}}}
\renewcommand*{\Koppa}{\ensuremath{\text{\textgreek\Coppa}}}

\newtheorem{theorem}{Theorem}[section]

\newtheorem{cannon}[theorem]{Cannon}

\newtheorem{corollary}[theorem]{Corollary}

\newtheorem{definition}[theorem]{Definition}

\newtheorem{exercise}[theorem]{Exercise}

\newtheorem{lemma}[theorem]{Lemma}

\newtheorem{postulate}[theorem]{Postulate}

\newtheorem{proposition}[theorem]{Proposition}
\newtheorem{remark}[theorem]{Remark}

\title{Elements of Librationism}

\author{Frode Bj\o rdal}

\date{}

\begin{document}

\maketitle
\thispagestyle{empty}
\setcounter{page}{-2}
\thispagestyle{empty}

\newpage
\tableofcontents\thispagestyle{empty}

\newpage

\noindent

\section{Introduction}

$\cvdash\vdash$

The librationist system now named \pounds \ (libra) is detailed some in \citep{Frode2011} and \citep{Frode2012a},
but we go beyond those accounts here and inter alia focus more incisively upon aspects
related to how it gives rise to new perspectives concerning inferential
principles, and the nature of connectives. The purpose of \pounds \ is to deal with the semantical and set theoretical paradoxes in a novel manner, and it offers a novel set theoretic foundation of mathematical reasoning as well as a theory of truth in a semantically closed language withouth compromising classical logic. In the following presentation there are some complexities that need justification, and we will explain some of these to provide their motivation.

\pounds \ is primarily accounted for semantically. We work in a segment $\textbf{L}_{\varsigma}$ of G\"{o}del's constructible hierarchy $\textbf{L}$, and the  countable ordinal $\varsigma$ needed is the least $\Sigma_{3}$-admissible ordinal. One important aspect of our approach, as pointed out in Section \ref{Formations as numbers}, is that we by means of the austere formation rules of Section \ref{Fundamentals} are provided an external coding which identify formulas and expressions of \pounds \ with finite von Neumann ordinals of $\textbf{L}_{\varsigma}$; an important advantage of this approach is that we much more transparently and explicitly than usual find a covering ordinal for the Herzberger style semantic process using $\Sigma_{3}$-collection in the proof of theorem \ref{Herzberger} (i), and without any invocation of the L\"{o}wenheim-Skolem theorem or other notions which may be thought of as presupposing uncountable sets.

In Section \ref{Formations as numbers} a second coding layer is introduced to provide G\"{o}del codings of formulas of \pounds \ in \pounds \ itself, and this is done in such a way that for an expression $E$ of \pounds \ \textbf{L} believes a natural number is $E$ iff \pounds \ believes it is $\Godelnum{E}$. This involves some intricacies, but there are some major payoffs. The motivation for the following pedantic policies pertaining to the formal language of \pounds \ is stated in Remark \ref{codingremark} of Section \ref{Formations as numbers}, and these policies enable our statement of semantical principle $P5$ in Section \ref{The semantics of pounds} and the articulation of \textit{the truth prescription} in Section \ref{prescriptions}.

We use the term \textit{thesis} for a formula which is librationistically valid, and \textit{theorem} is used as expected for results concerning \pounds. We consider a formula $A$ an \textit{anti thesis} of a system iff its negjunction ("negation") $\lnot A$ is a thesis of the system. A theory $ T $ extends theory $ S $ \emph {soberly} iff the set of theses of $ S $ is a proper subset of the set of theses of $ T $ and no thesis of $ T $ is an anti thesis of $ S $. $ T $ soberly interprets $ S $ iff $ T $ interprets $ S $ and for no \emph{interpretans} $A^{i}$ in $ T $ of the \emph{interpretandum} $A$ of $ S $ does
$ T $ have $A^{i}$ as a thesis and an anti thesis. As it is, \pounds \ soberly extends classical logic and impredicative mathematics in the sense of reverse mathematics. %By \citep{Frode2013}, \pounds \ soberly interprets ZF if ZF plus ``there are omega inaccessibles'' is consistent. By section \ref{definable echelon} below, \pounds \ plus the Skolem-Fraenkel Postulate interprets $ZFC$ if $ZFC$ is consistent. 

The central matters we cover are
central to our understanding of thinking and rationality, and to a large extent confounding. We will point out that unlike other foundational systems that are on offer librationism is in a very precise sense a \textit{disconnectionist} point of view as \pounds \ has theses which are $disconnected$. \pounds \ is not a non-classical system as it extends classical logic \textit{soberly}. Unlike paraconsistent systems engendered by adding naive comprehension to paraconsistent logics suggested in the literature for dealing with the paradoxes, \pounds \ is neither inconsistent nor contradictory. As divulged in \cite{Frode2013} we show how \pounds \ is able to soberly interpret $ZF$. As divulged in \cite{Frode2014a} we show that \pounds \ also makes it possible to isolate the definable real numbers and so prepare a space for definable analysis. In section \ref{definable echelon} we show that \pounds \ has an interpretation of $ZFC$ given what we name the \textit{Skolem-Fraenkel Postulate} (Postulate 21.20).

Unlike in \citep{Frode2012a} and in earlier superseded accounts of the librationist approach we use \textit{prescription} where one would expect \textit{axiom schema} and \textit{regulation} where one would expect \textit{inference rule}; moreover, \textit{prescript} is used where one would expect \textit{(proper) axiom} and \textit{regula} should be used if one wants to denote a specific instance of a regulation. A \textit{prescribe} is a fundamental thesis which cannot be universally generalized. Just prescribes, prescripts and prescriptions are \textit{posits}. The terms \textit{cannon} and \textit{postulate} are for statements that one thinks should be provable as theorems though for reasons connected with G\"{o}del's second incompleteness theorem only by means much stronger than those used to isolate the notion of being a theorem of \pounds.

This and further terminology is introduced in order to make distinctions called for by new concepts, and also to underline the fact that \pounds \ is not an axiomatic or formal system in the common sense. \pounds \ is \textit{super-formal}, or \textit{semi-formal} according to what the author takes as unfortunate contemporary terminology. We suggest to also consider \pounds \ a \textit{contentual} system as it is categorical and so rather more unambiguous with respect to content than merely formal systems. Nevertheless, one may as much as $\omega$-\textit{logic} is partly denoted by the term ``logic'' also understand \pounds \ as a logic, especially as Frege in the opening phrases of \textit{Der Gedanke} nails down logic's subject matter to be truth itself. For truth is indeed a most central theme of \pounds, and it should be compared with axiomatic theories of truth as overviewed in \cite{sep-truth-axiomatic}; \pounds \ much extends such formal relatives, and provides its sentences with names.

Our considerations concerning librationism are inherently semantical, and soundness and completeness considerations are irrelevant. As we focus upon \textit{one} intended model considerations concerning \textit{compactness} are extraneous. The posits we isolate are only examples of formula schemas which hold librationistically and the regulations we display are likewise just examples of principles which tell us when thesishood is regulated from that of others.  Nevertheless, the prescriptions and prescripts and regulations we do isolate are quite informative and comprehensive, and they moreover provide librationist justifications for important axiomatic systems.

\section{Fundamentals of \pounds} \label{Fundamentals}

As in \citep{Frode2012a} we continue to consider \pounds \ a $\textit{theory  of  sorts}$ and thence also a $\textit{theory of properties}$. However, in the final analysis the language of \pounds \ may be taken as just that of the ordinary language of set theory without identity including set brackets plus two special extra sort constants for $truth$ and $enumeration$. We take sorts or sets that are designated by terms (\textit{pronomina}) that are formed (in a sense made more precise below) without using the alethizor or the enumerator to be \textit{sets}. Hence \pounds \ is also an alternative theory of sets, and it contains paradoxical sets as e.g. Russell's set $\{x|x\notin x\}$. So $pace$ G\"{o}del and others this author now thinks that there always were and always will be set theoretical paradoxes.

To avert confusion between object language and metalanguage statements we use the following boldface syntax for metalanguage statements to denote objects of $\textbf{L}_{\varsigma}$: parentheses \fat ( and \fat ), set brackets \fat \{ and \fat \}, element sign \fat \in, disjunction $\fvee$, conjunction \fat \&, negation $\fat \sim$, implication $\fat \Rightarrow$, biimplication $\fat \Leftrightarrow$, variables \fat x, \fat y, \fat z, ..., ordinals \fat \alpha, \fat \beta, \fat \gamma, ... existential quantifier \fat \Sigma \ and universal quantifier \fat \Pi; $\fat \prec$ and $\fat \preceq$ are the usual orderings of the ordinals of $\textbf{L}_{\varsigma}$ and $\mathbf{N}$ is its set of finite von Neumann ordinals or natural numbers.

The austere Polish language $\texttt{L}(\pounds)$ of \pounds \ is its alphabet
$\texttt{A}(\pounds)$ plus its formation rules $\texttt{F}(\pounds)$.
$\texttt{A}(\pounds)$ is the two signs  . (dot) and $ | $ (bar). $\texttt{F}(\pounds)$ is  $\texttt{F}^S(0) - \texttt{F}^S(3)$, $\texttt{F}^F(0) - \texttt{F}^F(9)$, $\texttt{F}^E(0) - \texttt{F}^E(9)$ plus $\texttt{F}^V(0) - \texttt{F}^V(4)$ as in the following. Minuscule letters from the beginning of the Latin alphabet range over terms and their capital counterparts range over formulas. We let $u$, $v$, $w$, $x$, $y$, $z$, $u'$ and so on stand for arbritrary \textit{noemata} (see the formation rules right below). We take $m$ and $n$ to range over natural numbers. For any numeral $ \underline{n} $ we let $ |_{\underline{n}} $ be bar followed by $ n $ dots so that $ |_{\underline{0}} $ is $ | $ and $ |_{\underline{n+1}} $ is $ |_{\underline{n}}.$; when just the latter convention is used in presenting we say that the presentation is in \textit{bare form}, and when the convention is suppressed so that only bars and dots are used we say that it is in \textit{austere form}.

\bigskip 
\noindent
$\texttt{F}^S(0)$: $ |_{\underline{0}} $ is the \textit{sortifier}.\\
$\texttt{F}^S(1)$: $ |_{\underline{1}} $ is the \textit{universalizor}.\\
$\texttt{F}^S(2)$: $ |_{\underline{2}} $ is the \textit{norifyer}.\\
$\texttt{F}^S(3)$: Just the sortifier, the universalizor and the norifyer are \textit{syncategoremata}.
\smallskip\\
$\texttt{F}^F(0)$: $ |_{\underline{3}} $ is the \textit{alethizor}.\\
$\texttt{F}^F(1)$: $ |_{\underline{4}} $ is the \textit{enumerator}.\\
$\texttt{F}^F(2)$: $ |_{\underline{5}} $ is a \textit{noema}. \\
$\texttt{F}^F(3)$: If $ |_{\underline{n}}$ is a noema then $ |_{\underline{n+1}}$ is a noema. \\
$\texttt{F}^F(4)$: Nothing else is a noema.\\
$\texttt{F}^F(5)$: Just the alethizor, the enumerator and noemata are \textit{praenomina}.\\
$\texttt{F}^F(6)$: Just syncategoremata and praenomina are \textit{symbols}.\\
$\texttt{F}^F(7)$: If $ p $  is a symbol then $ p $ is a \textit{formation}.\\
$\texttt{F}^F(8)$: If $ p $ and $ q $ are formations then $ pq $ is a formation.\\
$\texttt{F}^F(9)$: Nothing else is a formation.
\smallskip\\
$\texttt{F}^E(0)$: Praenomina are \textit{terms}. \\
$\texttt{F}^E(1)$: If $a$ and $b$ are terms then $ba$ is a \textit{formula}. \\
$\texttt{F}^E(2)$: If $A$ and $B$ are formulas then $ |_{\underline {2}}AB$ is a formula. \\
$\texttt{F}^E(3)$: If $a$ and $b$ are terms then $|_{\underline{2}}ba$ is a \textit{term}. \\
$\texttt{F}^E(4)$: If $A$ is a formula and $ y $ is a noema then $ |_{\underline {1}}yA$ is a formula. \\
$\texttt{F}^E(5)$: If $A$ is a formula and $ y $ is a noema then
  $ |_{\underline {0}}yA$ is a term. \\
$\texttt{F}^E(6)$: Nothing else is a term or a formula.\\
$\texttt{F}^E(7)$: Just terms and formulas are \textit{expressions}.\\
$\texttt{F}^E(8)$: Just formulas are \textit{sentences}.\\
$\texttt{F}^E(9)$: Just terms are \textit{nomina} and \textit{sort constants}.
\smallskip\\
$\texttt{F}^V(0)$: If $A$ is a formula and $ y $ is a noema then all occurrences of $y$ in $ |_{\underline {1}}yA$ are variables.\\
$\texttt{F}^V(1)$: If $A$ is a formula and $ y $ is a noema then all occurrences of $y$ in $ |_{\underline {0}}yA$ are variables.\\
$\texttt{F}^V(2)$: The first occurrence of $y$ in $ |_{\underline {1}}yA$, or $ |_{\underline {0}}yA$, is the binding variable.\\
$\texttt{F}^V(3)$: All occurrences of $y$ in $ |_{\underline {1}}yA$ and in $ |_{\underline {0}}yA$ are bound variables.\\
$\texttt{F}^V(4)$: Nothing else is a variable. 

\medskip

The semantical principles of \pounds \ in the final analysis compel us to the unusual policies embodied by $\texttt{F}^E(8)$ and $\texttt{F}^E(9)$ (cfr. section \ref{curries} and \ref{sec:nominism} for the deeper motivation). The resulting \textit{nominist} view supported by \pounds \ is accounted for in more detail in section \ref{sec:nominism}, and the related and necessitated \textit{nominist turn} is explained in section \ref{Negjunction}. \pounds \ as a consequence has no free \textit{variables}. We have instead opted for using the expressions \textit{noema }%
(singular) and \textit{noemata} (plural) where one would expect \textit{free variable}. This is inter alia justified by
the fact that one meaning of the word \textit{noema} as listed in the Oxford
English Dictionary is: \textit{A figure of speech whereby something stated
obscurely is nevertheless intended to be understood or worked out}. Also,
the Greek letter \begin{greek} n \end{greek} in the original Greek word \begin{greek} n'ohma \end{greek} typographically very much resembles lower case $ v$. We also use variants of Latin \textit{nomen} for terms.\footnote{It may be that our knowledge of proto Indo European does not entirely justify us in thinking that there is an etymological relatedness between Greek \begin{greek} n'ohma \end{greek} (\textit{noema}) and Greek \begin{greek} >'onoma \end{greek} (\textit{onoma}), but the terms are surely conceptually related enough e.g. in as far as we often come to know things by their names. Quite possibly, ancient Greek philosophers did not make a connection between these notions. However these matters may be, in \pounds \ we take noemata to also be nomina.}

%Although e.g. $ |_{\underline{5}} $ and $ |_{\underline{6}}$ are considered noemata as taken in
%isolation, the bare term $ |_{\underline{0}}|_{\underline{6}} |_{\underline{6}}|_{\underline{5}}$ is taken to contain precisely $ |_{\underline{5}}$ as a noema and %the two occurences of $ |_{\underline{6}} $ in the term are
%considered \textit{variables} and not noemata; the first occurrence is the \textit{binding} variable and both occurrences are \textit{bound} variables. On this cfr. %$\texttt{F}^V(0) - \texttt{F}^V(4)$. 

We follow the convention that the expression ``X $ \triangleq $ Y'' stands for the idea that X is defined as Y, and ``$ \triangleq $'' is accordingly pronounced as ``is defined as'' or its likes. 

Let us define:
\begin{itemize}
  \item[D0:] $\hat{} \, yA \triangleq |_{\underline{0}} y A$
  \item[D1:] $\forall yA\triangleq |_{\underline{1}} y A$
  \item[D2:] $  \downarrow \! AB \triangleq |_{\underline{2}}AB$  
\item[D3:] $  \downarrow \! ab \triangleq |_{\underline{2}}ab$
 \item[D4:] $\mathrm{T}w\triangleq |_{\underline{3}}w$
  \item[D5:] $ $\euro$ w \hspace{2pt}\triangleq  |_{\underline{4}}w$
  \item[D6:] $v_{\underline{n}} \triangleq |_{\underline{n+5}}$
  \end{itemize}

With these notions we use Hebrew $\textcjheb{n}$ (Nun) in our definition of the set of noemata of expressions:
\begin{definition} \phantomsection\label{freevariables}

\begin{align*}
\textcjheb{n}(u) & \triangleq  \fat \{u\fat \} \\
\textcjheb{n}(\mathrm{T})&\triangleq \fat \{\fat\} \\
\textcjheb{n}(\mbox{\euro})&\triangleq \fat \{\fat\} \\
\textcjheb{n}(ba) &\triangleq \textcjheb{n} (a)\cup \textcjheb{n} (b) \\
\textcjheb{n} (\hspace{1pt} \hat{}\hspace{1,5pt}uA) &\triangleq \textcjheb{n}(A)\setminus\fat \{u\fat\} \\
\textcjheb{n} (\forall uA) &\triangleq \textcjheb{n}(A)\setminus\fat \{u\fat\} \\
\textcjheb{n}(\downarrow \hspace{-2pt}AB) &\triangleq \textcjheb{n} (A)\cup \textcjheb{n} (B) \\
\textcjheb{n}(\downarrow \hspace{-2pt}ab) &\triangleq \textcjheb{n} (a)\cup \textcjheb{n} (b) \\
\end{align*}

\end{definition}

A noema $u$ is \textit{present} in an expression $A$ iff $u\fat\in \textcjheb{n}(A)$. A formula $A$ is a \textit{proposition} iff no noema is present in $A$.  A formula $A$ is
\textit{atomic} iff $A$ is of the form $ba$ with terms $a$ and $b$. With this terminology, all propositions are sentences, so we do not presuppose that propositions are
extralinguistic entities in our framework. In \pounds \ it turns out that all formulas are \textit{parivalent} (see section \ref{Negjunction} below) with a proposition. 

A term 
$a$ is a \textit{cognomen} iff no noema is present in $a$. Some terms are neither cognomina nor praenomina. A term is a \textit{pronomen} iff it is a cognomen and neither $ \mathrm{T} $ (the alethizor) nor \euro \ (the enumerator) occurs in it. In \pounds \ we take a sort to be a  \textit{set} iff \pounds \ shows that it is identical with a pronomen.

For any term $a$ and noema $u$, $(a/u)$ is a substitution function from expressions to expressions:\bigskip

$(a/u)w\triangleq a$ if $ u=w $, otherwise $%
(a/u)w\triangleq w$

$(a/u)cb\triangleq(a/u)c(a/u)b$

$(a/u)\hspace{-3pt} \downarrow \hspace{-3pt}cb\hspace{1pt}\triangleq \hspace{-1pt}\downarrow \hspace{-2pt} (a/u)c(a/u)b$

$(a/u)\mathrm{T}\triangleq \mathrm{T}$

$(a/u) \mbox{\euro}\triangleq\mbox{\euro}$

$(a/u)\hspace{3pt}\hat{}\hspace{2pt}yA\triangleq \hat{}\hspace{2pt}y(a/u)A$ if $u\neq y$, else $(a/u)\hspace{2pt}\hat{}\hspace{2pt}yA\triangleq \hspace{1pt}\hat{}\hspace{1pt}yA$

$(a/u) (\forall y)A\triangleq (\forall y)(a/u)A$ if $u\neq y$, else 
$(a/u) (\forall y)A\triangleq (\forall y)A$

$(a/u) \hspace{-2pt} \downarrow\hspace{-2pt} AB\triangleq\hspace{-1pt} \downarrow\hspace{-2pt}(a/u)A(a/u)B$

\bigskip
We will make use of a suffix notation and write $A(a/u)$ for $%
(a/u)A$. Iterated uses of the substitution function like $A(a/u)(b/w)(c/y)$ should be written $A(a/u,b/w,c/y)$.

In the metalinguistic account we also presuppose the following definitions:

\begin{itemize}
   \item[D7:] $\lnot A\triangleq \downarrow \! AA$
  \item[D8:] $A\vee B\triangleq \downarrow \downarrow AB \! \downarrow AB$
  \item[D9:] $A\wedge B\triangleq \downarrow \downarrow AA \downarrow BB$
  \item[D10:] $A\rightarrow B\triangleq \downarrow \downarrow \downarrow AAB \downarrow \downarrow AAB$
  \item[D11:] $A\leftrightarrow B\triangleq (A\rightarrow B)\wedge (B\rightarrow A)$
  \item[D12:] $ \overline{a}  \triangleq \downarrow \! aa$
  \item[D13:] $a\cup b\triangleq \downarrow \downarrow ab \! \downarrow ab$
  \item[D14:] $a\cap b\triangleq \downarrow \overline{a} \overline{b}$
  \item[D15:] $a\setminus b\triangleq a\cap\overline{b}$
  \item[D16:] $\exists yA\triangleq \lnot \forall y\lnot A$
  \item[D17:] $a\hspace{-2pt}\in\hspace{-2pt} b\triangleq ba$
  \item[D18:] $\{y|A\} \triangleq \hat{} \, yA$
    \item[D19:] $\mathbb{T}A\triangleq (\exists y)(y\in\{y|A\})$, for $y\fat\notin\textcjheb{n}(A) \ \fat \& \ \fat\Pi m\fat (m \ is \ a \ noema  \ \fat \&  \ m$\fat \prec $y \  \fat\Rightarrow \ m\fat\in\textcjheb{n}(A)\fat ) $ \phantomsection\label{D19}
\end{itemize}

The metalinguistic connectives have the precedence order $ \lnot$,  $\wedge$,  $ \vee$, $ \rightarrow$ and then $ \leftrightarrow $. Parentheses are suppressed accordingly, and also associativity taken into account and outer parentheses omitted in order to better the legibility of the metalinguistic presentation. We call the $\downarrow$ the \textit{joint}, and the joint is a connective when operating on formulas and a \textit{juncture} when operating on terms; just junctures and connectives defined in terms of the joint are \textit{concourses}. As already stated, we use $u$, $v$, $w$, $ x $, $y$, $ z $, $u'$ etc. to range over noemata and variables in the metalinguistic account when availing ourselves of D(0)-D(19). When needed we use subscripts such as in $ x_{0} $ and $ x_{1}$. $ A(x,y,z) $ signifies that $ x $, $ y $ and $ z $ are noemata present in $ A $. We use $ \vv{x} $ in $ A(\vv{x})$  for $ A(x_{0},...,x_{n-1}) $ where $ n $ is unspecified for vectors of arbritrary dimension; the quantified expression $ \forall \vv{x}A$ is to be interpreted accordingly. Sentences and nomina that are presented (more or less) in line with these metalinguistic conventions are said to be in (more or less) \textit{presentable form}.

\section{Formations as Numbers, Mathematicalism and Coding} \label{Coding}
\label{Formations as numbers}

We adopt the policy of taking a formation of \pounds \ to $be$ the natural number (finite von Neumann ordinal) of \textbf{L}$_{\varsigma}$ denoted by the binary numeral which is gotten by replacing every occurrence of ``$ | $'' in its austere expression with ``$ 1 $'' and every occurrence of ``.'' in its austere expression with ``$ 0 $''.  Alternatively put, if one in the austere expression takes ``$ | $'' as the numeral ``$ 1 $'' and ``.'' as the numeral ``$ 0 $'' one may take the formation as the natural number denoted by it when taken as a binary numeral. The number corresponding to any symbol $ |_{\underline {n}}$ is $2^{n}$. The length of a natural number $n$\ in the binary system is $l(n)=\mu y(2^{y}>n)$. Define, for $ m\cdot n >0 $, $m\frown n\triangleq (m\cdot2^{l(n)}+n).$ $l(m \frown n)=l(m)+l(n)$ can be shown to hold by induction, and associativity then follows easily. For formations $ e $ and $ e' $ of \pounds, $ ee' $ is taken as $ e \frown e' $.

Our approach to formations as natural numbers is one which can be taken in general to formal and super (semi) formal systems. As an upshot the sets of theses of these systems may be understood as real numbers, i.e. sets of natural numbers. This supports a $mathematicalist$ point of view which holds that mathematics is more fundamental than logic. One may want to take the consequence relation of a logic as more fundamental than its theses or consequences from the empty set of assumptions. However that may be, from a mathematicalist point of view as suggested here the consequence relation of any given logic just \textbf{is} a function from real numbers to real numbers.

%\section{Coding} 

The coding function $\Godelnum{} \colon \mathbf{N} \to \mathbf{N}$ is given by the following recursion wherein ${}_{\bigtriangleup}$ is taken to abbreviate $|_{\underline{1}}|_{\underline{5}}|_{\underline{2}}|_{\underline{2}}|_{\underline{2}}|_{\underline{5}}|_{\underline{6}}|_{\underline{5}}|_{\underline{6}}|_{\underline{5}} {}_{{\scriptsize \Godelnum{n}}}|_{\underline{2}}|_{\underline{2}}|_{\underline{5}}|_{\underline{6}}|_{\underline{5}}|_{\underline{6}}|_{\underline{5}} {}_{{\scriptsize \Godelnum{n}}}$:\\

$\Godelnum{m} \triangleq \begin{cases}
|_{\underline{0}}|_{\underline{5}}|_{\underline{1}}|_{\underline{6}}|_{\underline{2}}|_{\underline{2}}|_{\underline{2}}|_{\underline{6}}|_{\underline{5}}|_{\underline{6}}|_{\underline{5}}|_{\underline{6}}|_{\underline{5}}|_{\underline{2}}|_{\underline{2}}|_{\underline{6}}|_{\underline{5}}|_{\underline{6}}|_{\underline{5}}|_{\underline{6}}|_{\underline{5}} &\mbox{if} \ m=.\\
|_{\underline{0}}|_{\underline{5}}|_{\underline{2}}|_{\underline{2}} {}_{{\scriptsize \Godelnum{n}}}|_{\underline{5}}{}_{\bigtriangleup}|_{\underline{2}}{}_{{\scriptsize \Godelnum{n}}}|_{\underline{5}} {}_{\bigtriangleup} & \mbox{if} \ m=n+1
\end{cases}$\\

\smallskip
\medskip

\begin{remark}
Recall that we adopt the numerical policy of taking ``$|$'' and ``.'' as the numerals ``1'' and ``0'' of the binary number system denoting finite von Neumann ordinals of $\mathbf{L}_{\varsigma}$, so that we take expressions of \pounds \ to \textbf{be} natural numbers of \textbf{L}$_{\varsigma}$.
\end{remark}
\begin{remark}
 $\Godelnum{.}$ is stated in bare form and $\Godelnum{n+1}$ in bare form modulo $\Godelnum{n}$; it is left as an exercise to work these out in more presentable forms given definitions D0-D19 above.
\end{remark}

\begin{remark}\label{codingremark}
Our coding function and numerical policy are so devised that for any formation $\mathbf{f}$ of \pounds \ a natural number is $\mathbf{f}$ according to \textbf{L} iff it is  $\Godelnum{\mathbf{f}}$ according to \pounds. In light of our policy laid down above concerning expressions of \pounds \ as numbers of \textbf{L}$_{\varsigma}$, this means that whenever $\mathbf{ef}$ is a formation of \pounds, then $\Godelnum{\mathbf{ef}}$ is $\Godelnum{\mathbf{e}\frown \mathbf{f}}$; with the numerical policy in mind, the following definition is useful.
\end{remark}

\begin{definition}
$\Godelnum{\mathbf{e}}\Godelnum{\frown}\Godelnum{\mathbf{f}} \triangleq \Godelnum{\mathbf{e}\frown \mathbf{f}}=\Godelnum{\mathbf{e}\cdot2^{l(\mathbf{f})}+\mathbf{f}}$\label{Concatenation}
\end{definition}

\begin{remark}
The central idea for the base case is to find the smallest number of \textbf{N} of \textbf{L} that denotes 0 according to \pounds \ without invoking noemata. In more presentable form $\ulcorner . \urcorner$ corresponds with $\{ v_{\underline{0}}| \lnot \forall v_{\underline{1}} (v_{\underline{0}} \in v_{\underline{1}} \rightarrow v_{\underline{0}} \in v_{\underline{1}}    ) \}$, but notice that the bare or austere form is strictly speaking needed to be exact and that the choice of variables is not arbitrary; to the last point, cfr. remark \ref{remark7} just below. If my count is right $\ulcorner . \urcorner$ is about the size of $ 2^{111}$. Work the opposite way for the successor case so that the definiens of $\ulcorner n+1 \urcorner$ in more presentable form is $\{v_{\underline{0}}| v_{\underline{0}} \in \ulcorner n \urcorner \vee \forall v_{\underline{1}} (v_{\underline{0}} \in v_{\underline{1}} \rightarrow \ulcorner n \urcorner \in v_{\underline{1}}   )\}$. Obviously, one could presuppose other choices for zero and successor.
\end{remark}
\begin{remark}
 In the definientia of $\ulcorner n+1 \urcorner$ and of $\ulcorner .\urcorner$=$\ulcorner 0 \urcorner$ as explicated in the previous remark the Leibnizian-Russellian theory of identity justified in section 4 of \citep{Frode2012a} is presupposed.
\end{remark}
\begin{remark}\label{remark7}
Notice that $\Godelnum{.}$ is a different number from $|_{\underline{0}}|_{\underline{8}}|_{\underline{1}}|_{\underline{7}}|_{\underline{2}}|_{\underline{2}}|_{\underline{2}}|_{\underline{7}}|_{\underline{8}}|_{\underline{7}}|_{\underline{8}}|_{\underline{7}}|_{\underline{8}}|_{\underline{2}}|_{\underline{2}}|_{\underline{7}}|_{\underline{8}}|_{\underline{7}}|_{\underline{8}}|_{\underline{7}}|_{\underline{8}}$ in \textbf{L}, but $\cVvdash \Godelnum{.}=|_{\underline{0}}|_{\underline{8}}|_{\underline{1}}|_{\underline{7}}|_{\underline{2}}|_{\underline{2}}|_{\underline{2}}|_{\underline{7}}|_{\underline{8}}|_{\underline{7}}|_{\underline{8}}|_{\underline{7}}|_{\underline{8}}|_{\underline{2}}|_{\underline{2}}|_{\underline{7}}|_{\underline{8}}|_{\underline{7}}|_{\underline{8}}|_{\underline{7}}|_{\underline{8}}$ is a maxim of \pounds \ given our policy on alphabetological variants (see Section \ref{alphabetological} on this notion) as per principle $P6$ in the semantic recursion in Section \ref{The semantics of pounds} below. So the notion of identity of \pounds \ does not coincide with that of \textbf{L}.
\end{remark}

\section{Substitution}\phantomsection\label{substitution}

We define a substitution function on triples of natural numbers by a course of value recursion:

\medskip

sub$(\ulcorner v_{\underline{i}} \urcorner, i, y )$=$y$
\smallskip

sub$(\ulcorner v_{\underline{i}} \urcorner, j, y )$=$\ulcorner v_{\underline{i}} \urcorner$ if $i\neq j$

\smallskip

sub$(\ulcorner ab \urcorner, i, y)$=sub$(\ulcorner a \urcorner, i, y)\frown$sub$(\ulcorner b \urcorner, i, y)$ for atomic formula $ab$

\smallskip

sub$(\ulcorner$$\downarrow$$  AB \urcorner, i, y)$ = $\ulcorner$$\downarrow$$  \urcorner \frown$ sub $(\ulcorner A \urcorner, i, y) \frown$ sub $(\ulcorner B \urcorner, i, y)$

\smallskip

sub$(\ulcorner \forall v_{\underline{i}}A\urcorner, i, y)$=$\ulcorner \forall v_{\underline{i}}A\urcorner$

\smallskip
sub$(\ulcorner \forall v_{\underline{j}}A\urcorner, i, y)$=$\ulcorner \forall \urcorner \frown \ulcorner v_{\underline{j}} \urcorner \frown$ sub $(\ulcorner A \urcorner, i, y)$ if $i \neq j$
\smallskip

sub$(\ulcorner \, \hat{} \, v_{\underline{i}}A\urcorner, i, y)$=$\ulcorner \, \hat{} \, v_{\underline{i}}A\urcorner$
 
\smallskip
sub$(\ulcorner \, \hat{} \,v_{\underline{j}}A\urcorner, i, y)$=$\ulcorner \, \hat{} \, \urcorner \frown \ulcorner v_{\underline{j}} \urcorner \frown$ sub $(\ulcorner A \urcorner , i, y)$ if $i \neq j$

sub$(x, i, y)$=\hspace{2pt}x if x is not of one of the above forms.

\medskip
We see that for any term $t$ and formula $A(v_{\underline{i}})$, sub$(\ulcorner A(v_{\underline{i}}) \urcorner, i, \ulcorner t \urcorner)$=$\ulcorner A(t)\urcorner$. Compare here and in the following \citep{Smorynsky1977}, p. 837.

We define Sub(x,y)=sub(x, i, y) \phantomsection\label{Sub} whenever x is the code of a formula with a noema and $i$ is the least number such that noema $ v_{\underline{i}}$ occurs in the formula x is a code of, and Sub(x,y)=x if x is not the code of a formula with a noema. We define {\scriptsize SUB}(x,y)=Sub(x,$ \ulcorner y \urcorner $) and abbreviate {\scriptsize SUB}($\ulcorner A(x) \urcorner, y$ ) by $\ulcorner A(\dot{y}) \urcorner$. Iterated uses are as expected, and if the vector's dimensionality is unspecified we write $\Godelnum{ A(\dot{\vv{x}})}$. 

\section{Alphabetological Variants and their Enumeration}\label{alphabetological}

Let $\mathrlap{\mathbf{T}}\mathbf{L} (A \leftrightarrow B)$ hold in \textbf{L} iff $A$ and $B$ are propositions (``closed sentence'') according to classical logic in the language of \pounds \ which are provably equivalent in classical logic, i.e. they are \textit{Lindenbaum-Tarski congruent}. Proposition $A$ is an \textit{alphabetological variant} of proposition $B$ iff $\mathrlap{\mathbf{T}}\mathbf{L} (A \leftrightarrow B )$ or there are cognomina (i.e. terms wherein no noema occurs) $a$ and $b$ which are alphabetological variants of each other and such that $ A $ is an alphabetological variant of $B(a/b)$. Cognomina $a$ and $b$ are alphabetological variants of each other iff $a=b=\mathrm{T}(=|_{\underline{3}})$ or $a=b=$\euro$ \hspace{2pt}(= |_{\underline{4}})$, or there are cognomina $c$ and $d$ and $e$ and $f$ such that $a=\downarrow \hspace{-3pt}cd$  and $b=\downarrow \hspace{-3pt}ef$ and either $c$ is an alphabetological variant of $e$ and $d$ is an alphabetological variant of $f$ or $c$ is an alphabetological variant of $f$ and $d$ is an alphabetological variant of $e$, or there are formulas $A$ and $B$ so that $a=\{x:A(x)\}$ and $b=\{y:B(y)\}$ and $( \exists x)A(x)$ is an alphabetological variant of $ (\exists y) B(y)$.

%\section{Enumerating alphabetological variants of cognomina}

Cognomina of \pounds, as other formations of \pounds,  have a natural order according to their sizes as natural numbers or finite von Neumann ordinals of $\textbf{L}_{\varsigma}$.  We presuppose a Kuratowskian definition of ordered pairs here and use the following notation to distinguish object language and meta language statements involving ordered pairs: $ \langle a, b \rangle$ is an object language ordered pair of \pounds \ and $\fat < \hspace{0pt} a, b\hspace{0pt} \fat >$ is an ordered pair of $\textbf{L}_{\varsigma}$. We define: 

\begin{definition}
$\fat <  n, b \fat > \fat \in \textbf{e}$ $\triangleq$ $n\fat\in\mathbf{N}$ and $b$ is a cognomen and an alphabetological variant $c$ of $b$ is the smallest cognomen such that $\fat\sim \fat \Sigma m\fat (m\fat\in\mathbf{N} \ \fat \& \ m \fat \prec n \ \fat \& \ \fat < m,c\hspace{0pt} \fat > \fat{\in} \textbf{e}\fat )$.
\end{definition}

\section{The Semantics of \pounds} \label{The semantics of pounds}

We first define the notion `$a$ \textit{is substitutable for} $u$ 
\textit{in} \ldots '\ for terms $a$ and noemata $u$ by the recursion: $a$ is substitutable for $u$ in $ y $ when $y$ is a noema; $a$ is substitutable for $u$ in $cb$
iff $a$ is substitutable for $u$ in $b$ and in $c$; $a$ is substitutable for $u$ in $ \downarrow\hspace{-2pt} bc $ iff $a$ is substitutable for $u$ in $b$ and in $c$; $a$ is substitutable for $u$ in $ \downarrow\hspace{-2pt} AB $ iff $a$ is substitutable for $u$ in $A$ and in $B$; $a$ is
substitutable for $u$ in $\hat{}\hspace{2pt} y A$ iff either $y$ is not a noema in $a$ or $u$ is not a noema in $A$ and $a$ is substitutable for $u$ in $A$; $a$ is substitutable
for $u$ in $\forall yA$ iff either $y$ is not a noema in $a$ or $u$ is not a noema in $A$ and $a$ is substitutable for $u$ in $A$. We have here adapted the account of \cite{enderton:2001a} p. 113. We also say that $a$ is free for $u$ in an expression iff $a$ is substitutable for $u$ in the expression, and write $\fat F(a,u,A)$ when the expression is a formula $A$ and $\fat F(a,u,b)$ when the expression is a term $b$.

In stating the semantical principles below we use almost presentable form. 
In defining the semi inductive Herzbergerian style semantic process (cfr.  \cite{Gupta1982-GUPTAP} and descendent literature for the related \textit{revisionary} approach)  we use the syntax specified at the beginning of Secion \ref{Fundamentals}. $\mathfrak{F}$ is the arithmetical predicate of $\textbf{L}_{\varsigma}$ for formulas of \pounds, and $\mathfrak{T}$ is its arithmetical predicate for terms of \pounds. We avail ourselves of the class of von Neumann ordinals in the transfinite recursion, and let minuscule Greek letters $\fat \alpha$, $\fat \beta$, ... plus the archaic $\fat \Koppa $ (koppa) denote ordinals and let $ m $ and $ n $ range over natural numbers. In \texttt{P}$5$, \texttt{P}$6$ and \texttt{P}$7$ we presuppose the Leibnizian-Russellian definition shown adequate in \citep{Frode2012a} section 4, so $ a=b\triangleq\forall u (a\in u \rightarrow b \in u) $. $\vDash$ is a function from ordinals to real numbers taken as sets of natural numbers of $\textbf{L}_{\varsigma}$, and we write $\fat \alpha\hspace{-2pt} \vDash\hspace{-2pt} A$ for $A \fat \in \hspace{-2pt}\vDash\hspace{-3pt} (\hspace{-1pt}\fat \alpha\hspace{-1pt})$. We presuppose that formulahood is fulfilled. For  $\mathbb{T}$ in \texttt{P}$5$ and below, recall D19 on page \pageref{D19}.

%In \texttt{P}$7$ $\mathbb{N}$ is $\{ x|\forall y(\emptyset \in y \wedge \forall z(z \in y \rightarrow z' \in y) \rightarrow x \in y))) \}$ where $\emptyset$ is $\{x|x \neq x\}$ and $z'$ is $\{x| x \in z \vee x=z \}$. 

\medskip

\noindent \texttt{P}$1$: $\fat\alpha \vDash a \in \{u|A\}  \Leftrightarrow $ \fat F(a,u,A) \fat \& {\footnotesize $\fat\Sigma \fat \beta \fat (\fat \beta \fat \prec\fat \alpha \ \fat \&\  \fat \Pi \fat \gamma \fat (\fat \beta \fat \preceq \fat \gamma \fat \prec\fat\alpha \fat\Rightarrow \fat\gamma \vDash A(a/u)\fat )\fat )$}\phantomsection\label{P1}\\ 
\texttt{P}$2$: $\fat \alpha \vDash \downarrow\hspace{-2pt} AB \Leftrightarrow$ $\fat\alpha \nvDash A$ \& $\fat\alpha \nvDash B$ \\
\texttt{P}$3$: $\fat\alpha \vDash a \in \downarrow\hspace{-2pt} bc \Leftrightarrow$ $\fat\alpha \nvDash a\in b$ \& $\fat\alpha \nvDash a\in c$\\
\texttt{P}$4$: $\fat\alpha \vDash \forall y A \Leftrightarrow \fat \Pi a\fat (\fat F(a,y,A)\Rightarrow \fat\alpha \vDash A(a/y)\fat )$\\
\texttt{P}$5$: $\fat\alpha \vDash \mathrm{T}u \Leftrightarrow \fat\Sigma A \fat (\mathfrak{F}(A) \fat \& \fat\alpha \vDash u=\Godelnum{A}\wedge\mathbb{T} A\fat )$ \\
\texttt{P}$6$: For $\fat < n,a\fat > \fat \in \mathbf{e}$: If {\small $\fat\Pi  m \fat (m\fat \prec n \ \fat \Rightarrow \ \fat \alpha \vDash v_{\underline{m}} \neq a\fat ) $, $\fat \alpha \vDash v_{\underline{n}}=a$};  else $\fat \alpha \vDash v_{\underline{n}}=v_{\underline{n-1}}$\\
\texttt{P}$7$: $\fat\alpha \vDash \hspace{-2pt} u \hspace{-3pt}\in$\hspace{-3pt}  \euro \hspace{2mm}$\Leftrightarrow \fat \Sigma a \fat \Sigma n \fat (a\fat\in\mathbf{N}\fat \& n\fat\in\mathbf{N}\fat \& \mathfrak{T}(a)\fat \& \fat\alpha \vDash u=\langle \Godelnum{n},a \rangle  \wedge a=v_{\underline{n}}\fat ) $

\bigskip

\noindent We define: \medskip

\begin{definition}\phantomsection\label{6.1}
$\vDash _{\fat\gamma} \triangleq \fat \{A\fat | \fat \gamma \vDash \mathbb{T}A \fat \}$

\end{definition}

\begin{definition}
$IN_{\vDash }\triangleq \fat \{A\fat |\mathfrak{F}(A)\  \fat \& \ \fat \Sigma \fat \beta \fat \Pi \fat \gamma \fat (\fat \beta \fat \preceq \fat \gamma
\fat \Rightarrow A\fat\in \vDash_{\fat \gamma}\fat )\fat \}$
\end{definition}

\begin{definition}
$OUT_{\vDash }\triangleq \fat \{A\fat | \mathfrak{F}(A)\ \fat \& \ \fat \Sigma \fat \beta \fat \Pi \fat \gamma \fat (\fat \beta \fat \preceq \fat \gamma
\fat \Rightarrow A\fat\notin \vDash_{\fat \gamma}\fat )\fat \}$
\end{definition}
 
\begin{definition}
$STAB_{\vDash }\triangleq IN_{\vDash}\hspace{1pt}\fat\bigcup\hspace{1pt} OUT_{\vDash }$
\end{definition}

\begin{definition}
$UNSTAB_{\vDash }\triangleq \fat \{A\fat|\mathfrak{F}(A)\fat \}\backslash STAB_{\vDash }$
\end{definition}

\begin{definition}
Limit $\fat\kappa $ \textit{covers } $\vDash $ $ \triangleq $ $\fat \Pi \fat \gamma\fat (\fat \kappa \fat \preceq \fat \gamma\fat \Rightarrow \fat (IN_{\vDash }\hspace{3pt}\fat\subset \hspace{-1pt} \vDash _{\fat\gamma}\fat \&\vDash _{\fat\gamma}\hspace{-1pt}\fat\subset \hspace{1pt}IN_{\vDash}\hspace{1pt}\fat\bigcup\hspace{1pt} UNSTAB_{\vDash }\fat ) \fat )$
\end{definition}

\begin{definition}
Limit $\fat \sigma $ \textit{stabilizes }$\vDash $ $ \triangleq $ $\fat \sigma $ covers $\vDash $ and 
$\vDash_{\fat \sigma}\ \subset IN_{\vDash}$
\end{definition}

\begin{theorem}[Herzberger]\ \label{Herzberger}

\begin{enumerate}
  \item[(i)] There is an ordinal $\fat\kappa$ which covers $\vDash$.	
  \item[(ii)] There is an ordinal $\fat\sigma$ which stabilizes $\vDash$.
\end{enumerate}

\end{theorem}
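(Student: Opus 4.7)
The plan for (i) is to locate, for each stably-in formula $A$, the least ordinal $\fat\beta_{A}$ with $A \fat\in \vDash_{\fat\gamma}$ for every $\fat\gamma \fat\succeq \fat\beta_{A}$, and dually $\fat\beta'_{A}$ for each stably-out formula; any limit $\fat\kappa$ strictly above all such $\fat\beta_{A}$ and $\fat\beta'_{A}$ then fulfills both clauses of covering by construction. The remaining task is to place such a $\fat\kappa$ inside $\textbf{L}_{\varsigma}$: the semantic recursion $\vDash$ is $\Delta_{1}$-definable over $\textbf{L}_{\varsigma}$ by admissibility, so that $IN_{\vDash}$ and $OUT_{\vDash}$ sit at $\Sigma_{2}$ and the stabilization functions $A \mapsto \fat\beta_{A}$ and $A \mapsto \fat\beta'_{A}$ at about the $\Sigma_{3}$ level over the $\Delta_{1}$-definable set of formula codes. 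Because $\varsigma$ is the least $\Sigma_{3}$-admissible ordinal, $\Sigma_{3}$-collection bounds their joint image strictly below $\varsigma$, and any limit $\fat\kappa$ above this bound serves.

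For (ii) the plan takes the $\fat\kappa$ from (i) and builds a limit $\fat\sigma \fat\succ \fat\kappa$ with $\vDash_{\fat\sigma} \fat\subset IN_{\vDash}$; covering then transfers from $\fat\kappa$ to $\fat\sigma$. Since formulas are natural numbers by Section \ref{Formations as numbers}, the unstable formulas enumerate as $A_{0}, A_{1}, \ldots$, and for each $A_{m}$ the out-set $\fat\{\fat\gamma : \fat\gamma \nvDash A_{m}\fat\}$ is cofinal above $\fat\kappa$. I would select an increasing sequence $\fat\gamma_{0} \fat\prec \fat\gamma_{1} \fat\prec \cdots$ above $\fat\kappa$ by a triangular scheme so that every $A_{m}$ is witnessed out by cofinally many $\fat\gamma_{n}$, and set $\fat\sigma = \sup_{n} \fat\gamma_{n}$. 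Unwinding $A \fat\in \vDash_{\fat\sigma}$ via D19 and \texttt{P}$1$, this asserts that $A$ holds on some final segment $[\fat\beta, \fat\sigma)$; if $A$ were unstable then, being in the enumeration, its out-stages would be cofinal below $\fat\sigma$, contradicting such a final segment, so $A$ must lie in $IN_{\vDash}$.

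The main obstacle I anticipate is legitimizing both constructions inside $\textbf{L}_{\varsigma}$ rather than externally. The enumeration of unstable formulas, the diagonal choice, and the supremum $\fat\sigma$ must all be defined at a level within reach of the available collection principle; here the coding policy of Section \ref{Formations as numbers}, which makes formulas literally finite von Neumann ordinals of $\textbf{L}_{\varsigma}$, is what lets the stabilization map $A \mapsto \fat\beta_{A}$ and its companions be expressed without any cardinality descent, so no L\"{o}wenheim-Skolem argument is required and the whole construction stays at the countable level advertised in the introduction.
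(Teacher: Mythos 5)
Your proposal is correct and follows essentially the same route as the paper: part (i) bounds the stabilization ordinals by a collection principle available in $\mathbf{L}_{\varsigma}$ (the paper invokes $\Pi_{2}$-collection and notes this yields $\Sigma_{3}$-collection, hence the choice of the least $\Sigma_{3}$-admissible $\varsigma$) and takes a limit above the resulting bound, while part (ii) diagonalizes above a covering ordinal against an enumeration of the unstable formulas in which each recurs cofinally, takes the supremum, and uses D19 with \texttt{P}$1$ to conclude that nothing unstable can lie in $\vDash_{\sigma}$, so that covering forces $\vDash_{\sigma}\subset IN_{\vDash}$. The only cosmetic differences are that you treat $OUT_{\vDash}$ explicitly in (i) and in (ii) choose stages at which the targeted formula fails outright, whereas the paper chooses stages at which its status flips relative to the previous stage; both devices do the same work.
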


\begin{proof}[Proof:] (i) By definition, $\fat \Pi A(A\fat \in IN_{\vDash }\Rightarrow\fat \Sigma \beta \fat \Pi \gamma (\beta \fat\preceq \gamma
\Rightarrow A\fat \in \hspace{-4pt}\vDash_{\gamma}))$. This is equivalent to $\fat \Pi A\fat\Sigma\beta\fat\Pi\gamma(A\fat \in IN_{\vDash }\Rightarrow (\beta \fat\preceq \gamma
\Rightarrow A\fat \in \hspace{-4pt}\vDash_{\gamma}))$. As $\mathbf{L}_{\varsigma}$ is closed under sufficient collection we have that
 $\fat \Pi A(A\fat \in IN_{\vDash }\Rightarrow \fat\Sigma\beta\fat\in a\fat \Pi \gamma (\beta \fat\preceq \gamma
\Rightarrow A\fat \in \hspace{-4pt}\vDash_{\gamma}))$ for some $a$. The ordinal $\nu =\fat\bigcup\fat \{x\fat |x\fat\in a \fat \& Ord(x) \fat\}$ is a covering ordinal. 
We gauge the amount of collection needed by using the definition of $IN_{\vDash}$ to see that we have invoked $\Pi_{2}$-collection, and as $\Pi_{n}$-collection implies $\Sigma_{n+1}$ collection in the context of Kripke Platek set theory, this justifies our choice of $\mathbf{L}_{\varsigma}$ with $\varsigma$ as the least $\Sigma_{3}$-admissible ordinal. (ii) Let $\delta $ be the least ordinal which covers $\vDash$.
Let $\fat\{f(n)\fat |n\fat \in \fat\omega \fat\}$ by an adaptation of Cantor's pairing function be an enumeration of all
elements of $UNSTAB_{\vDash }$ where each element recurs infinitely often so that if $%
B\textbf{=}f(m)$ and $m \fat \prec n\fat \in \omega $, then there is a natural
number $n^{\prime }$, $n \fat \prec n^{\prime }\fat \in \omega ,$ such that $f(n^{\prime
})=B$. Let $F(0)=\delta $ and $F(n+1)=$
the least $\nu >F(n)$ such that $f(n)\fat \in \vDash(\nu )$ iff $f(n)\hspace{0pt}\cancel{\fat \in} \vDash(F(n))$.
Let $\fat \phi =\fat \{\gamma \fat |\fat \Sigma m\fat \Sigma \nu (m\fat \in \omega $ $\&$ $\nu
=F(m) $ $\&$ $\gamma \fat \in \nu )\fat \}$. It is obvious that $\fat \phi $ is a limit
ordinal which covers $\vDash$. It is also clear that if $m\fat \prec n\fat \in \omega $
then $F(m)\fat \prec F(n)$. Since $\fat \phi $ covers $\vDash$, it suffices to show
that $B\fat \in\hspace{-2pt} \vDash_{\fat \phi }$ entails that $B\fat \in STAB_{\vDash }$ to establish that $\fat \phi $ stabilizes $\vDash$. Suppose $B\fat\in\hspace{-2pt} \vDash_{\fat \phi }$. By Definition \ref{6.1} on page \pageref{6.1}, then $\fat \phi\vDash\mathbb{T}B$. By definition D19 of page \pageref{D19}, $\fat \phi\vDash\exists x(x\in\{x|B\})$ for $x\fat\notin\textcjheb{n}(B)$ (cfr. Definition \ref{freevariables} on page \pageref{freevariables}). By $\mathrm{P}1$-$4$ of page \pageref{P1}, there exists an ordinal $\nu$ so that:

\medskip a)\qquad $\fat \Pi \mu \fat (\nu \fat\preceq \mu \fat \prec\fat \phi \Rightarrow 
\mu\vDash B\fat ) $ \medskip

Since $F$ is increasing with $\fat \phi $ as its range, we will for some
natural number $m\fat \in \omega $ have that $\nu \fat\preceq F(m)\fat \prec\fat \phi $, so that
\medskip

b)\qquad $\fat \Pi \mu \fat (F(m)\fat\preceq \mu \fat\prec\fat \phi \Rightarrow  \mu\vDash B\fat )
$

\vspace{3pt}

Suppose $B\fat \notin STAB_{\vDash }$. By our enumeration of
unstable elements where each term recurs infinitely often we have that 
$B=f(n)$ for some natural number $n$, $m\fat\prec n\fat\in\fat \omega $.
It follows that $F(m)\fat \prec F(n)\fat\prec\fat \phi $. From a) and b) we can then infer that $F(n)\vDash B$, since we have supposed that $\fat \phi\vDash B$. But from the construction of the function $F$ it
would then follow that $F(n+1)\not\vDash B$,
contradicting b). It follows that $B\fat\in\hspace{-2pt}\vDash_{\fat \phi}$ only
if $B\fat\in STAB_{\vDash }$, so that $\fat \phi $ stabilizes $\vDash $.   \end{proof}

\begin{lemma}
The least stabilizing ordinal is countable.
\end{lemma}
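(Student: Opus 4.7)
The plan is to verify that the stabilizing ordinal $\fat\phi$ exhibited in the proof of Theorem \ref{Herzberger}(ii) is already countable, which settles the lemma since the least stabilizing ordinal is $\fat\preceq\fat\phi$.

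The Introduction stipulates that $\varsigma$, the base ordinal of the universe $\mathbf{L}_\varsigma$ in which the entire semi-inductive recursion $\vDash$ lives, is countable. To leverage this, I would check that the sequence $F\colon\fat\omega\to\varsigma$ from the proof of Theorem \ref{Herzberger}(ii) actually takes all its values strictly below $\varsigma$. The base value $F(0)=\delta$, the least covering ordinal, sits in $\mathbf{L}_\varsigma$ by part (i). For the recursive step, if $F(n)\fat\prec\varsigma$ and $f(n)$ is unstable, then by the definition of $UNSTAB_{\vDash}$ inside $\mathbf{L}_\varsigma$ the two color classes $\fat\{\gamma\fat\mid f(n)\fat\in\vDash_\gamma\fat\}$ and $\fat\{\gamma\fat\mid f(n)\fat\notin\vDash_\gamma\fat\}$ are both unbounded below $\varsigma$, so the least $\nu\fat\succ F(n)$ at which the status of $f(n)$ flips exists in $\mathbf{L}_\varsigma$, giving $F(n+1)\fat\prec\varsigma$.

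Hence the range of $F$ is a subset of $\varsigma$, and $\fat\phi=\fat\bigcup\fat\{F(n)\fat\mid n\fat\in\fat\omega\fat\}\fat\preceq\varsigma$. Being dominated by the countable ordinal $\varsigma$, $\fat\phi$ is itself countable. By Theorem \ref{Herzberger}(ii) the ordinal $\fat\phi$ stabilizes $\vDash$, so the least stabilizing ordinal is $\fat\preceq\fat\phi$ and therefore countable. The one step that requires more than bookkeeping is the recursion check, namely that the ``next flip'' operation on unstable formulas is internal to $\mathbf{L}_\varsigma$; but this is already built into the statement of $IN_{\vDash}$, $OUT_{\vDash}$ and $UNSTAB_{\vDash}$ inside that universe, and is in any case no stronger than the $\Sigma_{3}$-admissibility of $\varsigma$ already invoked in part (i).
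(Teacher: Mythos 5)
Your argument is correct and is essentially the intended one: the paper leaves the lemma without an explicit proof, relying on the observation (made in the Introduction and in the discussion following the lemma) that the whole semi-inductive construction lives in $\mathbf{L}_{\varsigma}$ with $\varsigma$ the least $\Sigma_{3}$-admissible and hence countable, so the stabilizing ordinal produced in Theorem \ref{Herzberger}(ii) is bounded by $\varsigma$ and therefore countable. Your verification that $F$ stays below $\varsigma$ (base case from part (i), successor case from the unboundedness of both statuses of an unstable formula) is exactly the bookkeeping the paper omits, so this is the same approach, merely spelled out.
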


Compare the antecedent and now superseded account of \cite{Frode2012a}, and relatedly also \cite{Herzberger1980}, \citep{Herzberger1982}, \cite{Cantini1996}, \cite{Welch2003} and elsewhere. The semantic set up we provide here in important respects deviates from and simplifies arguments given earlier. Our rather pedantic semantic machinery has an important pay off as we may realize that the \textit{closure ordinal} (i.e. the least stabilising ordinal) is countable without any hint of appeal to anything uncountable as with L\"{o}wenheim Skolem considerations, as no set is uncountable according to $\Sigma_{3}KP$. 
Notice also that we now presuppose an extension of the formal language which also gives us the real complement of sets, so that at the initial ordinal $0$ for all terms $b$ of \pounds, $0\vDash\forall x(x\notin b)$ or $0\vDash\forall x(x\in b)$. 

We let \fat \Koppa\ - archaic Greek Koppa - be the closure ordinal. We have that $ \fat \Koppa \vDash \mathbb{T}A $  iff for all $ \fat\beta\fat\succeq\fat \Koppa $, $ \fat\beta \vDash A $.

We make the crucial \emph{librationist twist} to isolate the
intended model of librationism, and shift our attention to those formulas (sentences) $A$ which are such that $\fat \Koppa \vDash
\lnot \mathbb{T}\lnot A$. So our official definition of what we take as the \textit{roadstyle} sign
is $\cvdash A\triangleq \fat \Koppa \vDash \lnot \mathbb{T}\lnot A$.

All ordinals $ \fat\alpha $ are \textit{maximally progression consistent} in the sense that $%
\fat\alpha \vDash B$ iff $\fat\alpha \nvDash \lnot B$, and \textit{progression closed} in the sense that $\fat\alpha \vDash A$ and $\fat\alpha \vDash A \rightarrow B$ only if $\fat\alpha \vDash B$. Suppose ${\cancel \cvdash} A$, i.e. not $\cvdash A$. It follows that $\fat \Koppa \vDash \mathbb{T}\lnot A$ as $ \fat \Koppa $ is maximally progression consistent. But $%
\fat \Koppa \vDash \mathbb{T}\lnot A\rightarrow \lnot \mathbb{T} A$ on account of $\pounds \aoverb {2}{M}$ below. As $\fat \Koppa$ is progression closed, it follows that $\fat \Koppa\vDash \lnot \mathbb{T}A$, i.e. $\cvdash \lnot A$.
So $\cvdash A$ or $\cvdash \lnot A$, i.e. \pounds \ is negjunction (``negation'') complete.

Our definition of the roadstyle supports the following precise definitions of maxims (signified with $\cVvdash$) and minors (signified with $\cVdash$): $\cVvdash  A\triangleq \fat \Koppa\vDash \mathbb{T}A$ and $\cVdash A\triangleq\hspace{2pt}\cvdash A$ $\&$ $\cvdash \lnot
A.$ As $ \fat \Koppa $ is  progression consistent we have that $ \cVvdash A $ iff both $\cvdash A  $ and  ${\cancel \cvdash} \lnot A $. As \pounds \ is negjunction complete, $ \cVvdash A $ iff ${\cancel \cvdash} \lnot A $. Intuitively, the maxims are the unparadoxical theses while the minors are the paradoxical ones.

\section{Prescriptions, Prescripts and Prescribes as Posits of \pounds}
\label{prescriptions}

We give a partial list of posits of \pounds. All prescriptions that follow hold with all generalizations, so that generalization is
not a primitive regulation. We can show, however, by an inductive
argument going back to Tarski, that generalization holds as a derived
regulation relative to theses which follow from the prescriptions presupposed with all generalizations. Maximal prescriptions are those which only have maxims as instances, and they are marked with a subscripted capital M in their appellation. All instances of minor prescriptions (which are marked with a subscripted minuscule m in their appellation) are theses, but some of their instances are minors.

\begin{align*}
\pounds^i_\mathrm{M}   & \qquad A\rightarrow (B\rightarrow A) \\
\pounds^{ii}_\mathrm{M} & \qquad (A\rightarrow (B\rightarrow C))\rightarrow ((A\rightarrow B)\rightarrow (A\rightarrow C)) \\
\pounds^{iii}_\mathrm{M}  & \qquad (\lnot B\rightarrow \lnot A)\rightarrow (A\rightarrow B) \\
\pounds^{iv}_\mathrm{M}  & \qquad A\rightarrow \forall xA, \ \text{provided }x\fat\notin\textcjheb{n}(A) \text{} \\
\pounds^{v}_\mathrm{M} & \qquad \forall x(A\rightarrow B)\rightarrow (\forall xA\rightarrow \forall xB)  \\
\pounds^{vi}_\mathrm{M}  & \qquad \forall xA\rightarrow A(a/x), \ \text{if }\fat F(a,x,A)  \\
\pounds^1_\mathrm{M} & \qquad \mathbb{T}(A\rightarrow B)\rightarrow (\mathbb{T}A\rightarrow \mathbb{T}B)  \\
\pounds^2_\mathrm{M}  & \qquad \mathbb{T}A\rightarrow \lnot \mathbb{T}\lnot A \\
\pounds^3_\mathrm{M}  & \qquad \mathbb{T}B\vee \mathbb{T}\lnot B\vee (\mathbb{T}\lnot \mathbb{T} \lnot A\rightarrow \mathbb{T}A) \\
\pounds^4_\mathrm{M}  & \qquad \mathbb{T}B\vee \mathbb{T}\lnot B\vee (\mathbb{T}A\rightarrow \mathbb{TT}A) \\
\pounds^5_\mathrm{M}  & \qquad \mathbb{T}(\mathbb{T}A\rightarrow A)\rightarrow \mathbb{T}A\vee\mathbb{T}\lnot A \\
\pounds^6_\mathrm{M}  & \qquad \exists x\mathbb{T}A\rightarrow \mathbb{T}\exists xA \\
\pounds^7_\mathrm{M}  & \qquad \mathbb{T}\forall xA\rightarrow \forall x\mathbb{T}A \\
\pounds^8_m  & \qquad \mathbb{T}A\rightarrow A \\
\pounds^9_m  & \qquad A\rightarrow \mathbb{T}A \\
\pounds^{10}_m  & \qquad \forall x\mathbb{T}A\rightarrow \mathbb{T}\forall xA \\
\pounds^{11}_m  & \qquad \mathbb{T}\exists xA\rightarrow \exists x\mathbb{T}A
\end{align*}

The \textit{alethic comprehension} prescription:\bigskip

$%
\begin{array}{ll}
\mathrlap{\mathbf{C}}A_{\mathrm{M}}   & \forall x(x\in \{y|A\}\leftrightarrow \mathbb{T}A(x/y))\text{, if }x\text{
is substitutable for }y\text{ in }A\text{}%
\end{array}% 
$
\bigskip

The \textit{truth} prescription:\bigskip

$%
\begin{array}{ll}
\mathfrak{T}_{\mathrm{M}}  & \forall \vv{x}(\mathbb{T}A(\vv{x}) \leftrightarrow \mathrm{T}\hspace{1pt}\Godelnum{ A(\dot{\vv{x}})})
\end{array}% 
$
\bigskip

\begin{definition}\label{Kind}
$KIND(a)\triangleq\forall x(\mathbb{T}x\in a\vee\mathbb{T}x\notin a)$
\end{definition}

The \textit{enumeration} precripts:\bigskip

$
\begin{array}{ll}
\vspace{2pt}
\mbox{\euro}^{1}_{\mathrm{M}} & \forall x\exists y(y\in \mathbb{N}\wedge <y,x>\in\mbox{\euro})\\
\vspace{2pt}
\mbox{\euro}^{2}_{\mathrm{M}} & \forall x(x \in \mbox{\euro} \rightarrow \exists y,z(x=<y,z>))\\
\vspace{2pt}
\mbox{\euro}^{3}_{\mathrm{M}} & \forall x,y,z(<x,y>\in\mbox{\euro}\ \wedge<x,z>\in\mbox{\euro}\rightarrow y=z)\\
\vspace{2pt}
\mbox{\euro}^{4}_{\mathrm{M}} & \forall x,y,z(<x,y>\in\mbox{\euro}\ \wedge<z,y>\in\mbox{\euro}\rightarrow x=y)\\
\mbox{\euro}^{5}_{\mathrm{M}} & KIND(\mbox{\euro})
\end{array}
$

\bigskip

The \textit{prescribes} prescription:\medskip

$
\begin{array}{ll}
\mathrm{P}_{\mathrm{M}} & <n,v_{\underline{n}}>\in\mbox{\euro}$, for $n\in\mathbb{N}$ and corresponding $v_{\underline{n}}\fat \in\mathbf{N}
\end{array}
$

\bigskip
The \textit{disunion} prescript:\medskip

$
\begin{array}{ll}
\downarrow_{\mathrm{M}} & \forall x,y,z(x\in \downarrow \hspace{-2pt}yz \leftrightarrow x\notin y \wedge x\notin z)
\end{array}
$
\medskip

The \textit{complement} prescript:\medskip

$
\begin{array}{ll}
\overline{_{\mathrm{M}}} & \forall x, y(x\in \overline{y}\leftrightarrow x\notin y)
\end{array}
$

\medskip

The \textit{relative complement} prescript:\medskip

$
\begin{array}{ll}
\setminus_{\mathrm{M}} & \forall x, y, z(x\in y\setminus z \leftrightarrow x\in y \wedge x\notin z)
\end{array}
$

\medskip 

The \textit{union} prescript: \medskip

$
\begin{array}{ll}
\cup_{\mathrm{M}} & \forall x, y, z(x\in y\cup z \leftrightarrow x\in y \vee x\in z)
\end{array}
$

\medskip

The \textit{intersection} prescript: \medskip

$
\begin{array}{ll}
\cap _{\mathrm{M}} & \forall x, y, z(x\in y\cap z \leftrightarrow x\in y \wedge x\in z)
\end{array}
$

\begin{definition}
Let $\mathrm{F}\triangleq \mathfrak{F}\setminus \mathrm{T}$
\end{definition}

The \textit{bivalence} prescript: \medskip

$
\begin{array}{ll}
\mathrm{B}_{\mathrm{M}} & \forall x(\mathfrak{F}x\rightarrow\mathrm{F}x\vee\mathrm{T}x)
\end{array}
$
\medskip

I leave it as an exercise here to verify the prescriptions and prescripts above. But see \citep{Frode2012a} for accounts of a number of prescriptions, some of which are lifted from \cite{Cantini1996}. 

There are infinitely many distinct prescribes which contain at least one noema. Importantly, prescribes are not instances of prescripts and cannot be universally generalized upon. Nevertheless, by the prescribes prescription fundamental prescribes are instances of a prescription. As $a=b \triangleq \forall u (a \in u \rightarrow b \in u)$, $ v_{\underline{0}}=\mathrm{T}$ is an example of a prescribe relative to the presupposed enumeration \textbf{e} as $\mathrm{T}$ is the first cognomen in $ \textbf{e} $.

\textit{Librationist comprehension} is constituted by those principles which are engendered by alethic comprehension in cooperation with all posits and regulations of \pounds. 

\section{Regulations of \pounds}\label{Regulations}
The regulations valid in librationism are sensitive as to whether the initial or consequential theses are maxims or minors. We give a partial list of some salient regulations: \bigskip

\noindent $%
\begin{array}{lll}
R_{1} & \cVvdash A\text{ }\&\text{ }\cVvdash(A\rightarrow B)\Rightarrow 
\text{ }\cVvdash B & \text{\textit{modus maximus}} \\ 
R_{2} & \cVdash A\text{ }\&\text{ }\cVvdash (A\rightarrow B)\Rightarrow 
\text{ }\cvdash \hspace{0pt}B & \text{\textit{modus\ subiunctionis}} \\ 
R_{3} & \cVvdash A \hspace{3pt}\& \hspace{4.7pt}\cVdash(A\rightarrow B)\hspace{-0.8pt}\Rightarrow 
\text{ }\hspace{0pt}\cVdash \hspace{-0.3pt}B & \text{\textit{modus\ antecedentiae}} \\ 
R_{4} & \cVvdash A\Rightarrow \text{ }\cVvdash \mathbb{T}A & \text{%
\textit{modus\ ascendens\ maximus}} \\ 
R_{5} & \cVdash A\Rightarrow \text{ }\cVdash\mathbb{T}A & \text{%
\textit{modus\ ascendens\ minor}} \\ 
R_{6} & \cVvdash \mathbb{T}A\Rightarrow \text{ }\cVvdash A & \text{%
\textit{modus descendens maximus}} \\ 
R_{7} & \cVdash\mathbb{T}A\Rightarrow \text{ }\cVdash A & \text{%
\textit{modus descendens minor}} \\ 
R_{8} & \cVvdash \lnot \mathbb{T}\lnot A\Rightarrow \text{ }\cVvdash\mathbb{T}A & \text{\textit{modus\ scandens\ maximus}} \\ 
R_{9} & \cVdash\lnot \mathbb{T}\lnot A\Rightarrow \text{ }\cVdash\mathbb{T}A & \text{\textit{modus scandens minor}} \\ 
R_{10} & \cVvdash \forall x\mathbb{T}A\hspace{0pt}\Rightarrow \text{ }\cVvdash %
\mathbb{T}\forall xA & \text{\textit{modus Barcanicus}} \\ 
R_{11} & \cvdash \hspace{1pt} \mathbb{T}\exists xA\hspace{1.3pt}\Rightarrow \text{ }\cvdash\hspace{0pt} \exists x%
\mathbb{T}A & \text{\textit{modus attestans generalis}} \\ 
R_{12} & \cVdash\mathbb{T}\exists xA\hspace{1.3pt}\Rightarrow \text{ }\cVdash\exists x\mathbb{T}A & \text{\textit{modus attestans minor}} \\ 
R_{13} & \cVdash A\text{ }\&\cVdash B\Rightarrow \text{} \cVdash\lnot 
\mathbb{T}\lnot A\wedge \lnot \mathbb{T}\lnot B & \text{\textit{%
modus\ minor}}%
\end{array}%
\smallskip $

\noindent $%
\begin{array}{lll}
R_{Z}\text{ } & \hspace{-1.7pt}\cVvdash A(x/y)\text{ for all noemata }x\text{ }\Rightarrow 
\text{ }\cVvdash \forall xA(x/y) & \text{\textit{Z-regulation}}%
\end{array}%
\bigskip $

It is reminded that this list of regulations is not
complete, as librationism is not recursively axiomatizable and no such list
can be safeguarded as complete. Moreover, we have aimed at providing a
fairly comprehensive list instead of circumscribing a list of independent regulations. The \text{\textit{Z-regulation}} is named in analogy with the $ \omega $\text{\textit{-rule}} by using the last letter of the Latin alphabet.

Notice that on account of $ R_{1} $ conjoined with $ \pounds\aoverb{i}{M} - \pounds\aoverb{vi}{M} $ and the fact that all generalizations of the latter are presupposed as prescriptions, \pounds \ is super-classical. We take our regulation modus maximus above to be \textit{the} reasonable interpretation of modus ponens as classically intended, so that the novelty of our regulations does not as we see it constitute a weakening of but an extension of classical principles. However, the facial value of modus ponens only appeals to thesishod so we use the novel term to distinguish.

We verify regulation $R_{3}$. Suppose $\cVvdash A$ and $\cVdash A\rightarrow B$. We then have that $\fat \Koppa \vDash \mathbb{T}A$, $\fat \Koppa \vDash \lnot \mathbb{T}%
\lnot (A\rightarrow B)$ and $\fat \Koppa \vDash \lnot \mathbb{T}(A\rightarrow B)$.
That $\fat \Koppa \vDash \lnot \mathbb{T}\lnot (A\rightarrow B)$ means that $%
A\rightarrow B$ is unbounded under $\fat \Koppa $. That $\fat \Koppa \vDash \mathbb{T}A$
means that $A$ holds as from some ordinal below $\fat \Koppa $. As all ordinals below $\fat \Koppa$ are progression closed, $B$ is unbounded under $\fat \Koppa $, i.e. $%
\fat \Koppa \vDash \lnot \mathbb{T}\lnot B$. That $\fat \Koppa \vDash \lnot 
\mathbb{T}(A\rightarrow B)$ means that $A\wedge \lnot B$ is unbounded under $%
\fat \Koppa $. But so a fortiori also $\lnot B$ is unbounded under $\fat \Koppa $, i.e. 
$\fat \Koppa \vDash \lnot \mathbb{T}B$. So $\cVdash B$ and $\cVdash \lnot B$%
, i.e. $\cVdash B$. The other regulations are left at exercises, but \citep{Frode2012a} can be consulted.

%\section{Sobriety of Extensions and Interpretations}\label{sobriety}

\section{\euro, Countability, Order and Kind Choice}

Given the enumeration prescripts of section \ref{The semantics of pounds}, \euro \ is a bijection from $\mathbb{N}$ to a full universe as $\{x|x=x\}$; the non-extensionality of \pounds \ prohibits us from talking about \textit{the} full universe. Let $<$ be the usual order on $\mathbb{N}$:%As stressed and made precise in \cite{Frode2012a}, in librationism all sets or sorts are countable. 

\begin{definition}\label{deforder}~\\

i)\indent \hspace{3pt}$a\blt b\triangleq\exists x,y(x\in\mathbb{N}\wedge y\in\mathbb{N}\hspace{1pt}\wedge <x,a>\in\mbox{\euro}\hspace{2pt}\wedge <y,b>\in\mbox{\euro}\wedge x<y)$

~

ii)\indent $a\bleq b\triangleq a \blt b\vee a=b$

\end{definition}

The following exercise illustrates the paradoxicality of the power set operation and of sets given by the comprehension condition invoked in Cantorian arguments. Sections 7 and 8 of \cite{Frode2012a} should be compared.

\begin{exercise}
Show that if $\cVdash \{x|x=x\}\nsubset a$ then $C=\{x|x\in a\wedge \forall y(<x,y>\in\mbox{\euro}\rightarrow x\notin y)\}$, $f=\{<x,y>|<x,y>\in\mbox{\euro}\wedge y\subset a)\}$ and $\mathcal{P}(a)=\{x|x\subset a\}$ are paradoxical.
\end{exercise}

\begin{theorem}[The Kind Choice Theorem]~\\
If $a$ is a kind with only kind members, $\{x|\exists y(y\in a\wedge x\in y\wedge\forall z(z\in y\rightarrow x \bleq z))\}$ is a kind with precisely one member from each member of $a$.
\end{theorem}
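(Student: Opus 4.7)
The plan is to decompose the theorem into three tasks: (a) verify that the selector set, call it $C=\{x\mid\exists y(y\in a\wedge x\in y\wedge\forall z(z\in y\rightarrow x\bleq z))\}$, is a kind; (b) for each $y\in a$ produce at least one member $x\in y\cap C$; (c) show uniqueness of this $x$. The intuitive content is that $\bleq$ is a well-order on every kind, inherited from the usual order on $\mathbb{N}$ via the bijection \euro, so each member of $a$ has a unique $\bleq$-minimum, and $C$ collects precisely those minima.

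For (b), fix $y\in a$. Since $a$ is a kind with only kind members, $y$ itself is a kind, so for every $x$ the question $x\in y$ is alethically stable. By the prescript $\mathrm{P}_{\mathrm{M}}$ together with $\mbox{\euro}^1_{\mathrm{M}}$, every object $x$ has a unique enumeration index $n$ with $\langle n,x\rangle\in\mbox{\euro}$ (uniqueness by $\mbox{\euro}^3_{\mathrm{M}}$ and $\mbox{\euro}^4_{\mathrm{M}}$). Consider the set of natural numbers $I_y=\{n\in\mathbb{N}\mid\exists x(x\in y\wedge\langle n,x\rangle\in\mbox{\euro})\}$. When $y$ is nonempty, $I_y$ is a nonempty subset of $\mathbb{N}$ and so has a least element $n_0$ under $<$; the unique $x_0$ with $\langle n_0,x_0\rangle\in\mbox{\euro}$ and $x_0\in y$ witnesses, via Definition \ref{deforder}, $\forall z(z\in y\rightarrow x_0\bleq z)$. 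Hence $x_0\in C\cap y$. (If some $y\in a$ happens to be empty then it trivially contributes no member; the natural reading of the statement is that each nonempty $y\in a$ contributes one.)

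For (c), suppose $x_0,x_1\in y$ both satisfy the minimality clause. Then $x_0\bleq x_1$ and $x_1\bleq x_0$. Unpacking Definition \ref{deforder} and invoking that $\mbox{\euro}$ is one-to-one on both coordinates (again via $\mbox{\euro}^3_{\mathrm{M}}$, $\mbox{\euro}^4_{\mathrm{M}}$) together with antisymmetry of $<$ on $\mathbb{N}$, the indices of $x_0$ and $x_1$ must agree, forcing $x_0=x_1$. Combined with (b) this gives "precisely one member from each member of $a$".

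Task (a) is where I expect the main friction, because membership in $C$ is defined by a $\Sigma$–$\Pi$ combination of atomic and enumeration statements, and being a kind is an alethic stability requirement rather than a purely classical one. The strategy is to observe that every atomic clause appearing in the defining formula is stable: $y\in a$ is stable because $a$ is a kind ($\mbox{\euro}^5_{\mathrm{M}}$-style reasoning from $KIND(a)$), $x\in y$ and $z\in y$ are stable because each member of $a$ is a kind, and $x\bleq z$ is stable because \euro \ itself is a kind and equality/ordering on enumerated objects is determined by a stable predicate on $\mathbb{N}$. Because each constituent predicate decides itself alethically under $\mathbb{T}$, one can push $\mathbb{T}$ through the quantifiers using $\pounds^6_\mathrm{M}$, $\pounds^7_\mathrm{M}$ together with the minor converses $\pounds^{10}_m$, $\pounds^{11}_m$ applied at maximal instances, and through the propositional connectives via $\pounds^1_\mathrm{M}$ and the descent/ascent regulations $R_4$–$R_7$. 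This yields $\mathbb{T}(x\in C)\vee\mathbb{T}(x\notin C)$ for every $x$, i.e. $KIND(C)$ by Definition \ref{Kind}. The delicate point is to justify using the minor converses only where the subformulas are maximal; this succeeds here because all the ingredient predicates about kinds and about \euro \ are themselves maximal, not merely thetic.
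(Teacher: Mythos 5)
Since the paper leaves the Kind Choice Theorem as an exercise, there is no official proof to measure you against; judged on its own terms, your decomposition is the natural and, as far as I can tell, the intended argument: use the enumeration to give each nonempty member $y$ of $a$ a $\bleq$-least element, get uniqueness of that least element from $\mbox{\euro}^{3}_{\mathrm{M}}$, $\mbox{\euro}^{4}_{\mathrm{M}}$ and antisymmetry of $<$ on $\mathbb{N}$, and get kindness of the selector by propagating alethic stability through the defining formula. Two refinements would tighten it. In (b), the least-number step needs an explicit anchor: $I_y$ is a \emph{kind} subcollection of $\mathbb{N}$ because $y$ and \mbox{\euro} \ are kind (Definition \ref{Kind}, $\mbox{\euro}^{5}_{\mathrm{M}}$), and it is for kind conditions that \pounds's arithmetic supplies the least-element principle. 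In (a), you do not need the minor prescriptions $\pounds^{10}_m$, $\pounds^{11}_m$ at all, so the hedge about applying them "only at maximal instances" dissolves: for each instantiation the atomic constituents are maxims one way or the other, so $\mathbb{T}$ of the matrix or of its negjunction follows by $\pounds^{1}_{\mathrm{M}}$, passes under $\exists$ by $\pounds^{6}_{\mathrm{M}}$ and under $\forall$ by \textit{modus Barcanicus} $R_{10}$ (a regulation operating on maxims), and the universal closure $\forall x(\mathbb{T}x\in C\vee\mathbb{T}x\notin C)$ is then obtained with $R_{Z}$.

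The one genuine gap is the final clause. What (b) and (c) establish is that each nonempty $y\in a$ has exactly one $\bleq$-minimum and that this minimum lies in $C\cap y$; they do not show that $C\cap y$ is a singleton, and without further hypotheses it need not be, since the minimum of some other $y'\in a$ may lie in $y$ without being $y$'s minimum, so $C\cap y$ can pick up extra elements. Compare Proposition \ref{capturechoice}, where the analogous choice statement explicitly assumes that non-coextensional members have pairwise empty intersections. So either read "precisely one member from each member of $a$" as "a unique canonically selected element from each nonempty member" --- which is exactly what your argument delivers --- or add the disjointness hypothesis and close (c) by observing that, under disjointness, any element of $C\cap y$ must witness minimality for $y$ itself and hence coincide with $x_0$. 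The empty-member caveat you flagged is likewise genuinely needed, as the theorem as stated carries no nonemptiness hypothesis.
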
 

\noindent We leave the proof of the Kind Choice Theorem as an exercise.

\section{Curries and Set Theoretical Paadoxes}\label{curries}

Let $ c^{F}\triangleq \{x:x\in x\rightarrow F \}  $, for some sentence $F$. By $ A \aoverb {C}{M}$, $ c^{F}\in c^{F}\leftrightarrow\mathbb{T}(c^{F}\in c^{F} \rightarrow F) $ is a maxim. $\pounds\aoverb{8}{m}$ and sentence logic give us $\cVdash c^{F}\in c^{F}\rightarrow (c^{F}\in c^{F}\rightarrow F)$. But we have  $\cVvdash(c^{F}\in c^{F}\rightarrow (c^{F}\in c^{F}\rightarrow F))\rightarrow (c^{F}\in c^{F}\rightarrow F)$, so by modus
subiunctionis or modus maximus it follows that $\cVdash (c^{F}\in c^{F}\rightarrow F)$. By modus ascendens
we get $\cVdash \mathbb{T}(c^{F}\in c^{F}\rightarrow F)$, and so next $\cVdash c^{F}\in c^{F}$
follows from alethic comprehension by modus subiunctionis or modus maximus. So we have that
both $\cVdash c^{F}\in c^{F}$ and $\cVdash c^{F}\in c^{F}\rightarrow F$ for any arbitrary sentence 
$F$. Clearly, then, modus ponens as classically stated cannot hold for the roadstyle as otherwise \pounds \ would have been trivial. 

Now, $\cVdash c^{F}\in c^{F}$ being a thesis, it must either be a minor or a
maxim. If we have $\cVvdash c^{F}\in c^{F}$, we easily derive that $F$ is a maxim, i.e. $%
\cVvdash F$. If $c^{F}\in c^{F}$ is a minor it follows that 
$\lnot F$ is a thesis; for then also $\cVdash c^{F}\notin c^{F}$ so by
alethic comprehension and modus subiunctionis $\cVdash \lnot \mathbb{T}%
(c^{F}\in c^{F}\rightarrow F)$ and thence by modus scandens and modus descendens, $%
\cVdash c^{F}\in c^{F}\wedge \lnot F$, so that by tautologies and modus subiunctionis, $%
\cVdash \lnot F$. It follows, by parity of reasoning, that for any
sentence $F$, either $F$ is a maxim, $F$ is a
minor, or $\lnot F$ is a maxim. But in our contentual framework this is as we knew it to be, for we observed in the paenultimate paragraph of section \ref{The semantics of pounds} that \pounds \ is negjunction complete.

If we consider  $ c^{\bot}\triangleq \{x|x\in x\rightarrow \bot \}  $, where $ \bot $ is $ falsum $ or a contradiction, we realize that it behaves semantically as Russell's paradoxical set $ \{x|x \notin x\} $. Much more complicated paradoxicalities can be constructed when $ D $ in $ c^{D} $ iself is paradoxical.

\section{ The  Diagonal Lemma and Semantical Paradoxes}

\begin{theorem}[The Carnap-G\"{o}del Diagonal Lemma]\label{CarnapGodel}
If $A(x)$ is a formula with only $x$ as a noema, then there is a sentence $B$ such that $\cVvdash B\leftrightarrow A(\Godelnum{B})$.
\end{theorem}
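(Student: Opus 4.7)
The plan is to adapt the classical Carnap-G\"odel self-referential construction to \pounds, using the substitution function SUB introduced in Section \ref{substitution}. The crux is to find an object-language term of \pounds \ that represents SUB in such a way that its ground evaluations come out as maxims rather than merely as minors; once this is in place, the diagonalization is formulaic.

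The first step is to verify that there is a term $s(v_{\underline{0}}, v_{\underline{1}})$ of \pounds \ representing SUB, in the sense that for any formula $C(v_{\underline{0}})$ with only $v_{\underline{0}}$ as its noema and any natural number $n$,
\[
\cVvdash s(\Godelnum{C(v_{\underline{0}})}, \Godelnum{n}) = \Godelnum{C(\Godelnum{n})}.
\]
Because \pounds \ soberly extends impredicative mathematics in the sense of reverse mathematics, the recursion defining SUB in Section \ref{substitution} can be captured by alethic comprehension together with definite description, yielding such a functional term. The entire computation lives inside the alethic-free pronominal fragment of \pounds \ on ground numerals, a stable and unparadoxical stratum, so each such identity comes out as a maxim.

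With $s$ in hand the diagonal is routine. Set
\[
D(v_{\underline{0}}) \triangleq A(s(v_{\underline{0}}, v_{\underline{0}})), \qquad d \triangleq \Godelnum{D(v_{\underline{0}})}, \qquad B \triangleq A(s(\Godelnum{d}, \Godelnum{d})).
\]
Applying the representability property with $C \triangleq D$ and $n \triangleq d$, together with the evident equation $\mathrm{SUB}(\Godelnum{D(v_{\underline{0}})}, d) = \Godelnum{D(\Godelnum{d})} = \Godelnum{B}$ from Section \ref{substitution}, gives $\cVvdash s(\Godelnum{d}, \Godelnum{d}) = \Godelnum{B}$. Substitutivity of identity inside maxims, which follows from the Leibnizian-Russellian definition $a = b \triangleq \forall u(a \in u \rightarrow b \in u)$ together with modus maximus applied to the classical-logic posits $\pounds^i_\mathrm{M}$--$\pounds^{vi}_\mathrm{M}$, then yields $\cVvdash A(s(\Godelnum{d}, \Godelnum{d})) \leftrightarrow A(\Godelnum{B})$, which is precisely the desired $\cVvdash B \leftrightarrow A(\Godelnum{B})$.

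The principal obstacle is this representability claim. One must ensure both that SUB, defined by a nontrivial recursion on codes, is captured by an honest term of \pounds, and that every ground evaluation lands on the \emph{maximal} side of the roadstyle; otherwise the key identity could only be propagated via the weaker regulations $R_{2}$ or $R_{3}$ and would not suffice to yield a maximal biconditional. Once representability at the maximal level is secured, the remainder of the proof depends only on a single ground computation of SUB being maximally correct in \pounds.
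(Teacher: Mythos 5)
Your overall route is the same as the paper's: both are Smorynski-style diagonalizations through the substitution function of Section \ref{substitution}, and your insistence that the substitution function be represented by an object-language term whose ground evaluations come out as maxims is a point the paper leaves entirely implicit (its proof simply writes $Sub(x,x)$ inside formulas of \pounds), so on that score your proposal is, if anything, more explicit than the paper's own argument.

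There is, however, a level-of-coding slip in your diagonal instantiation which breaks the final step in this paper's framework. With $C\triangleq D$ and $n\triangleq d$ your representability property yields $\cVvdash s(\Godelnum{D(v_{\underline{0}})},\Godelnum{d})=\Godelnum{D(\Godelnum{d})}$, whereas what you need, given $B\triangleq A(s(\Godelnum{d},\Godelnum{d}))$, is $\cVvdash s(\Godelnum{d},\Godelnum{d})=\Godelnum{B}$: the first arguments do not match. In this paper $\Godelnum{d}$ is $\Godelnum{\Godelnum{D(v_{\underline{0}})}}$, a term denoting the number $d$ rather than the number $D(v_{\underline{0}})$, and these are distinct numbers which \pounds \ will not identify (recall from Section \ref{Formations as numbers} that the code of a formation is an enormously larger number than the formation itself), so the representability instance never delivers the identity you rely on; the familiar classical habit of conflating ``$d$'' with ``the numeral for the G\"{o}del number of $D$'' is exactly what the two-layer coding here forbids, because the term already playing the role of that numeral is $\Godelnum{D(v_{\underline{0}})}$ itself. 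The repair is to diagonalize at that term: put $B\triangleq A(s(\Godelnum{D(v_{\underline{0}})},\Godelnum{D(v_{\underline{0}})}))$ and apply your representability with $n$ taken to be the formula $D(v_{\underline{0}})$ qua number, giving $\cVvdash s(\Godelnum{D(v_{\underline{0}})},\Godelnum{D(v_{\underline{0}})})=\Godelnum{B}$, after which your substitutivity step yields $\cVvdash B\leftrightarrow A(\Godelnum{B})$ as desired. This is precisely the role of $m=\Godelnum{A(Sub(x,x))}$ in the paper's proof, where the same term serves both as the code of the diagonalized formula and as the term substituted into it, with no second layer of corners introduced.
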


\begin{proof}[Proof:]
We follow the related account of \cite{Smorynsky1977}, page 827. Given $A(x)$, define $A(Sub(x,x))$ (see page \pageref{Sub}) by replacing all occurrences of $x$ in $A$ with $Sub(x,x)$. Let $m=\Godelnum{A(Sub(x,x))}$. Consider $A(Sub(m,m))$. As $m=\Godelnum{A(Sub(x,x))}$, $\cVvdash A(Sub(m,m))\leftrightarrow A(Sub(\Godelnum{A(Sub(x,x))},m))$. As $\Godelnum{A(Sub(x,x))}$ is the code of a formula that only has $x$ as a noema, $\cVvdash A(Sub(\Godelnum{A(Sub(x,x))},m))\leftrightarrow A(\Godelnum{A(Sub(m,m))})$. So if we take $B$ as $A(Sub(m,m))$ we have that $\cVvdash B\leftrightarrow A(\Godelnum{B})$. 
\end{proof}

\vspace{5pt}
Recall Definition \ref{Concatenation} and consider the formula $\mathrm{T}x\Godelnum{\frown}\Godelnum{\rightarrow}\Godelnum{\frown}\Godelnum{D}$. By Theorem \ref{CarnapGodel} and Definition \ref{Concatenation} there is a sentence $C$ so that $\cVvdash C\leftrightarrow \mathrm{T}\Godelnum{C\rightarrow D}$. By using a variety of sentences $D$ we can generate a wide variety of paradoxical conditions, and other variations of the Carnap-G\"{o}del Diagonal Lemma ensure yet further variations as with the analogous set theoretic paradoxes of Section \ref{curries}.

\section{\mbox{\textit{Nominism}: Nominalism Released, Platonism Restrained}}
\label{sec:nominism}
As pointed out by Kripke it is common policy in logic to presuppose a \textit{generality-interpretation} of variables, or what we instead take as \textit{noemata}. Such a policy cannot be sustained in librationism. We need only consider e.g. $c^{v _{\underline{13}}={v_{\underline{96}}}}$ to see this. By the previous section, and as \pounds \ is negjunction complete, $ v _{\underline{13}}=v_{\underline{96}}$ is a thesis or $ v _{\underline{13}} \neq v_{\underline{96}}$ is a thesis. As identity is kind (see \citep{Frode2012a}) either $ v _{\underline{13}}=v_{\underline{96}}$ is a maxim of \pounds \ or $ v _{\underline{13}} \neq v_{\underline{96}}$ is a maxim of \pounds. Had one adopted the generality-interpretation for \pounds \ one would have to conclude that there is only one object according to \pounds \ lest \pounds \ be trivial. But a theory that is trivial or only postulates the existence of one object is not interesting and so we do not adopt the generality interpretation. Instead we adopt a \textit{nominality-interpretation} where what is usually taken as free variables are considered noemata and nomina, i.e. names. In our set up $ v _{\underline{13}} \neq v_{\underline{96}}$ is indeed a maxim, but we have no regulation or regula that allows generalizing universally.

We take \textit{nominism} to be the view that all mathematical objects have a name while it nonetheless upholds the platonist view that mathematical objects are abstract. Nominism is supported by librationism by the latters avoidance of Cantor's conclusion that there are uncountable infinities and insistence instead that there are only denumerably many obects; for the reader's comvenience we repeat that it is not claimed anywhere that Cantor's arguments are invalid (cfr. \citep{Frode2012a} and \citep{Frode2011} for more on this). As there, according to \pounds, are only a denumerable infinity of objects, we have enough names to name all mathematical objects and we have invoked a nominality policy with that as objective in our semantics. 

Notice that the term ``platonism'' in the context of set theory is sometimes taken to stand for the \textit{Cantorian} view that the endless hierarchy of alephs exists in a non-relative sense. Here we instead abide by what we take as a more plausible usage of the term in the philosophy of mathematics where it denotes the view that takes mathematical objects to be abstract objects. The nominism we propound of course rejects the Cantorian version of platonism.

\section{Contradiction, Contravalence and Complementarity}
\label{Negjunction}

In this section we develop new ideas which pertain to how the connectives should be understood in the librationist framework. 

%In \pounds \ semantical and mathematical paradoxes are dealt with in ways which significantly and advantageously deviate from other approaches in the
%literature; or at least so I argue. \pounds \ is a semi-formal system by what I take to be unfortunate standard terminology, but so incidentally the slash through %the symbol
%\pounds \ may also be taken as a reminder that \pounds \ is not a logical or formal
%system in the common sense. But \pounds \ is eminently a mathematical system and it is, as we have seen,
%negjunction complete. As it is, \pounds's negjunction completeness in no way
%conflicts with G\"{o}del's insights as \pounds \ is a contentual system which is not recursively axiomatizable; it also incorporates infinitary
%inferential principles such as the $ Z$-$regulation $. As pointed out in section \ref{sobriety}, \pounds \ is a sober extension of classical logic, Peano arithmetic %and much stronger mathematical theories way beyond predicative mathematics. 

As we have seen in section \ref{The semantics of pounds}, the semantics of \pounds \ comes about by elaborations, and a librationist twist, upon the semi inductive type of approach employed by \citep{Herzberger1982} to analyse the Liar's paradox in a way which in some important respects improved upon that of \citep{Kripke75}. However, there is not a simple duality between the librationist semantics and an envisioned Herzbergerian style approach as \pounds \ has noemata and no free variables. $ v_{\underline{13}} \neq v_{\underline{16}} $ may e.g. be a maxim of \pounds \ but $ v_{\underline{13}} = v_{\underline{16}} $ is \textit{not} unbounded in all Herzbergerian semantic processes. One important aspect of the librationist elaboration upon the Herzberger style approach is that we focus upon \textit{one} semantic process in order to gain categoricity; this we dub the \textit{nominist turn}, and it is with the nominist turn accompanied by the librationist twist that \pounds \ becomes a \textit{contentual system}.

Induced principles for truth  and abstraction are
such as to e.g. make $r\in r$ a thesis of \pounds \ and also the negjunction $r\notin r$ a thesis of \pounds \ when $r$ is Russell's set $\{
x:x\notin x \}$ of all and only those sets which are not members of
themselves. However, $r\in r\wedge r\notin r$ is not a thesis
of \pounds; notice that if it had been this would have contradicted
that \pounds \ is a sober extension of classical logic. Pay also heed here to
the fact that the induced regulations of \pounds \ are all novel
and in particular that modus ponens as classically stated is not generally truth preserving in \pounds. 

Librationism indeed comes very close to being a so-called
paraconsistent system. However, I shall on the basis of its semantics argue
that \pounds \ is neither paraconsistent nor inconsistent nor
contradictory. This will involve concepts and a terminological policy from
a crestal (“meta”) level. 

A sentence's \textit{valency} is the set of von Neumann
ordinals of \textbf{L} in the semantical process where it holds. The \textit{valor} of a sentence is
the least upper bound of its valency, and thus 
the union of its valency as we presuppose von Neumann ordinals in the semantic set up.  A sentence has
at least one of two \textit{values}, viz. it can be $ true$  or it can be $ false$. A sentence
is true iff its valor is the closure ordinal \fat \Koppa \ and a formula is false iff its negjunction is true. 

We let ${\mathbb{V}}( A )$   be the valency of $ A $ and   $\mathcal{V}( A )$ be the valor of $ A $. If $\mathcal{V}( A )\prec\fat \Koppa$, $ A $ is false and not true
and we say from our external perspective that $ A $ is \textit{pseudic};  if $\mathcal{V}( \lnot A )\prec\fat \Koppa$, $ A $ is true and not false and we say that $ A $ is \textit{veridic}. It turns out that a sentence $ A $ is veridic (pseudic)
iff it is a maxim that $\mathbb{T} A $ ($\mathbb{T}\lnot A$), and the latter holds iff $ A $ ($\lnot A$) is a maxim. There are no special
challenges with accounting for paradoxical conditions that involve the notions
of being pseudical or veridical, for \pounds \ has no access to the external perspective on \pounds \ needed to talk about veridicity and pseudicity. 

Two sentences $ A $ and $ B $ are \textit{parivalent} iff the
valency of $ A $, $\mathbb{V} (A)$, is identical with $\mathbb{V} (B)$; two sentences are \textit{altervalent} iff they are not parivalent; two sentences $ A $ and
$ B $ are \textit{contravalent} iff $\mathbb{V} (A)$ is $\fat \Koppa\backslash \mathbb{V} (B)$; the \textit{ambovalence} of
$ A $ and $ B $ is the intersection $\mathbb{V} (A) \cap \mathbb{V} (B)$; the \textit{velvalence} of $ A $ and $ B $ is
the union $\mathbb{V} (A)\cup \mathbb{V} (B)$; the \textit{subvalence} of $ A $ under $ B $ is $\mathbb{V} (\lnot
A)\cup \mathbb{V} (B)$ and the \textit{homovalence} of $ A $ and $ B $ is
$(\mathbb{V} (A) \cap  \mathbb{V} (B)) \cup (\mathbb{V} (\lnot A) \cap \mathbb{V} (\lnot B)) $.

The connectives do not work truth-functionally in librationism, but they work \textit{valency-functionally} and by following the classical interdefinability connections as in any Boolean algebra. The valency of the negjunction of $ A $, $\mathbb{V} (\lnot A)$, is the
contravalence of $ A $; the valency of the \textit{adjunction} («conjunction») of $ A $ and $ B $,
$\mathbb{V} (A\wedge B)$, is the ambovalence of $ A$ and $ B $; the valency of the
\textit{veljunction} («disjunction») of $A$ and $ B $,  $\mathbb{V} (A\vee B)$, is the velvalence of
$ A $ and $ B $; the valency of the \textit{subjunction} (“material implication”) $A\rightarrow B$,
$\mathbb{V} (A\rightarrow B)$, is the subvalence of $ A $ under $ B $ and the valency of the
\textit{equijunction} (“material equivalence”) $A\leftrightarrow B$, $\mathbb{V} (A\leftrightarrow B)$, is the
homovalence of $ A $ and $ B $. 

We use sligthly non-standard names for the connectives in part to forestall irrelevant objections which appeal to something like the one and only true meaning of them in ordinary language. One should appreciate that the connectives do behave truth functionally for maxims, and so in this essential respect they do have their classical meaning in ordinary discourse.

Two sentences $ A $ and $ B $ are \textit{incompatible} iff their
ambovalence is empty and, consequently, $A\wedge B$ fails to be a thesis.
Contravalent sentences are thence also incompatible. All sentences are
contravalent with their negjunction; moreover, if $ A $ and $ B $ are contravalent then
$ A$  and $\lnot B$ are parivalent.  

What does ``What does it say?''\hspace{-5pt} say? A variety of things, to be sure; but we lay down the convention that a sentence \textit{dictates} its valor, and that a sentence \textit{presents} its valency. Given this convention, we take contravalent sentences to be \textit{contrapresentive} and parivalent sentences to be \textit{paripresentive}. We on occasion say that the sentence's valency is the \textit{way} the sentence dictates its valor. 

Two sentences are \textit{paridictive} iff they dictate the same valor, and otherwise they are \textit{alterdictive}.
Two sentences are contradictive iff they are contravalent and alterdictive. A contradiction is the adjunction of two contradictive sentenes. In
consequence precisely one of two contradictive sentences is true, so it can by
our light never be correct, \textit{pace} certain paraconsistentists, to maintain
contradictions. 

Two sentence are \textit{complementary} iff they are
contravalent and paridictive. Two complementary sentences are thence both true, and they dictate the same (viz.
\fat \Koppa) in \textit{opposite} ways. In particular, for Russell's set $r=\{x|x\notin x\}$, $r\in r$ and $r\notin r$ are
complementary. Thence $r\in r$ and $r\notin r$ dictate the same in opposite ways and so do not by our standards contradict each
other. 

We cannot strictly speaking say ``sentences
$r\in r$ and $r\notin r$ are both true'' in one mouthful as $ r\in r $ and $ r\notin r $ are incompatible. Librationism is committed to a peculiar discretist or
sententialist, though nonetheless holistic, point of view.

We regard
a theory as inconsistent iff it has theses of the form $A\wedge\lnot A$, and as contradictory iff it has contradictions as theses; consequently, we hold that a theory is inconsistent iff it is contradictory. Given the
foregoing it is our considered opinion that \pounds \ is consistent and not
contradictory. As a consequence librationism should not be considered to be
a dialetheist point of view as \textit{dialetheism} is canonically characterized as a view
which takes some contradictions to be true. Instead we take librationism to
offer a \textit{bialethic} point of view. 

We take our considerations in this section to meet a challenge that remains one for those who as some dialetheists believe that a sentence and its contradiction can both be true. Such paraconsistentists owe others an explanation as to what they in dialetheic cases think it is a true sentence $p$ says which its true contradictory $\lnot p$ contradicts.
\section{Coherency, Incoherency and Paracoherency}

We take a formula $A$ of a theory $T$ to be a \textit{nonthesis} of $T$ iff $A$ is not a thesis of $T$, and understand the theory $T$ to be an ordered pair $\fat <T_{\vDash},T_{\nvDash}\fat >$ where $T_{\vDash}$ is the set of theses of $T$ and $T_{\nvDash}$ is the set of nontheses of $T$. By the observation towards the end of Section \ref{The semantics of pounds} that in \pounds \ we have $\cVvdash A$ iff ${\cancel \cvdash} \lnot A$, both thesishod of \pounds \ and nonthesishood of \pounds \ are instrumental in the statements of the posits of \pounds \ (Section \ref{prescriptions}) and in the statements of its regulations (Section \ref{Regulations}). We also understand classical theories as $PA$ and $ZF$ in such terms as this in that we take \textit{modus maximus} as the proper understanding of \textit{modus ponens} and take all axioms or axiom schemas $A$ of the theory to be asserted while the negation taken as a nonthesis of the theory. A theory $\fat <T_{\vDash},T_{\nvDash}\fat >$ is \textit{coherent} iff $T_{\vDash}\bigcap T_{\nvDash}$ is empty, and a theory is \textit{incoherent} iff it is not coherent.

A sentence $A$ is taken as a \textit{contrapresentive} thesis of a theory iff $A$ is both a thesis and an anti thesis of the theory. Let us agree that a theory is contrapresentive iff it has contrapresentive theses. A theory is trivial iff all sentences of its language are theses. Trivial systems and inconsistent theories with simplification or adjunction elimination are contrapresentive. \pounds \ is contrapresentive, but neither trivial nor inconsistent. Contrapresentationism is the view that a contrapresentive theory, such as \pounds, is true.

We say that two formulas $A$ and $B$ of a theory $T$ are \textit{connected} iff $A$ and $B$ are theses of $T$ only if also the conjunction $A\wedge B$ is a thesis of $T$. Two sentences disconnect with each other iff they do not connect with each other. A sentence is disconnected iff it disconnects with some sentence. A set $b$ is disconnected iff for some set $a$ the sentence $a \in b$ is 
disconnected. Paradoxical theses of \pounds \ are disconnected theses of \pounds, and vice versa. It is straightforward that if $A$ is not a thesis then A connects with all sentences, and further that all maxims connect with all sentences. All sentences are self-connected and the relation $connects \ with$ is also symmetric, but not transtive. A theory is disconnected iff it has disconnected theses. A topic is disconnected iff a true theory about it is disconnected and \textit{disconnectionism} is the view that there are sound disconnected theories 

Some paraconsistent logics, such as the ones following the approach by Jaskowski, are \textit{non-adjunctive}. But such logics do not in and of themselves have disconnected theses, though extensions of such logics with suitable comprehension principles or semantic principles may be disconnected if not trivial.

Notice also that \pounds \ is distinct from paraconsistent systems in that the latter, including Jaskowski's system, do not have \textit{ex \ falso \ quodlibet}, ($A \wedge \lnot A \rightarrow B$), as a theorem whereas \pounds \ has it as a maxim. We emphasize this obvious corollary for the reason that the failure of \textit{ex falso quodlibet} is very arguably a defining feature of paraconsistent approaches.

We correctly hold it against someone if she utters $A$ and thence $not \ A$ when she is meant to elucidate a connected topic. However, we take our discussion to have revealed that paradoxes are essentially disconnected. I therefore contend that \pounds \ should be regarded as fulfilling a very important adequacy requirement by being a disconnected theory about the disconnected and quasi incoherent and absurd topic of paradoxicality. 

%\section{Librationism and Parasistency}

The librationist theory \pounds \ may, as far as its dealing with
paradoxical phenomena is concerned, be thought of as one accompanied with many
perspectives which we shift between in our reasoning in those contexts. One may employ the term \textit{parasistency} for this idea that \pounds \ lets us \textit{stand beyond}, so to speak, and shift between perspectives. We have
an external overarching crestal (meta) perspective which helps us illuminate many issues, as
e.g. illustrated here. Besides, for each true sentence  its valency may be
thought of as associated with a partial perspective we may take upon
mathematical and semantical reality. There are, then, oscillating switches, as it were,
between different perspectives we may have upon paradoxical sentences, and this
justifies our adoption of librationism's neologist name. Moreover, such
oscillating shifts between perspectives correspond well with our contemplative
experiences in connection with the paradoxes, and so librationism on this count
appropriately fulfills an important desideratum, arguably an adequacy
condition, by incorporating them. 

It is of course important in all of this that the right balance is struck between various desiderata. As it is, \pounds \ seems well suited to strike precisely such a balance as it soberly extends classical logic and even soberly interprets classical mathematics in as far as the latter is consistent. Moreover, theses are only disconnected theses of \pounds \ if they are minor and hence paradoxical theses of \pounds. Our isolated notions of complementarity and valency functionality appropriately alleviate the loss of intuitiveness brought upon us by the paradoxical phenomenon of disconnectedness.

\section{Manifestation Points with Parameters} \label{manifestation point}

We have earlier indicated that we follow the standard practice of writing $A(x,y)$ to indicate that the noemata $x$ and $y$ are present in the formula $A$. We extend this practice so that we e.g. write $c(x,\vv{z})$ to indicate that the noema $x$ and the noemata from the vector $\vv{z}$ occur as parameters in the term $c$; as we now work metalinguistically it makes sense to think as if we invoke noemata as parameters.

We rehearse the definition of manifestation point of section 6 of \cite{Frode2012a} for convenience and to show that we can make use of instances with parameters.

\begin{theorem} 
If $A(x,y,z)$ is a formula with the noemata indicated we
can find a term $h(z)$ such that $\cVvdash\forall w,z(w\in h(z)\leftrightarrow\mathbf{TT}%
A(w,h(z),z))$.
\end{theorem}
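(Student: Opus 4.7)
The plan is to extend the Carnap--G\"odel Diagonal Lemma (Theorem \ref{CarnapGodel}) from sentences to terms-with-parameters, and then to read the desired biconditional off via alethic comprehension. The essential observation is that, by $\mathrlap{\mathbf{C}}A_{\mathrm{M}}$, any term of the form $\{w\mid\mathbb{T}A(w,t,z)\}$ automatically satisfies $w\in\{w\mid\mathbb{T}A(w,t,z)\}\leftrightarrow\mathbb{T}\mathbb{T}A(w,t,z)$ as a maxim; so the whole task reduces to producing a term $h(z)$ that \emph{is}, provably in \pounds, the set $\{w\mid\mathbb{T}A(w,h(z),z)\}$. This is the parametric, term-valued analogue of the diagonal self-reference used in Theorem \ref{CarnapGodel}.

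For the diagonalization I would use the $\mathrm{Sub}$ function of Section \ref{substitution} in the $y$-slot of $A$. Introduce the auxiliary parametric term
\[ B(x,z)\,\triangleq\,\{w\mid\mathbb{T}A(w,\,\mathrm{Sub}(x,x),\,z)\}, \]
in which $\mathrm{Sub}(x,x)$ is treated as a term of \pounds \ (legitimate since, by Section \ref{Formations as numbers}, all formations \emph{are} natural numbers and hence terms of \pounds). Let $m=\Godelnum{B(x,z)}$. The defining property of $\mathrm{Sub}$ then yields $\mathrm{Sub}(m,m)=\Godelnum{B(m,z)}$ in \textbf{L}, which via Remark \ref{codingremark} together with the truth prescription $\mathfrak{T}_{\mathrm{M}}$ internalizes as a maxim of \pounds. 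Setting
\[ h(z)\,\triangleq\,B(m,z)\,=\,\{w\mid\mathbb{T}A(w,\,\mathrm{Sub}(m,m),\,z)\}, \]
one obtains the diagonal cancellation $\cVvdash\mathrm{Sub}(m,m)=h(z)$.

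Now apply alethic comprehension directly to $h(z)$ to conclude
\[ \cVvdash\,w\in h(z)\leftrightarrow\mathbb{T}\bigl(\mathbb{T}A(w,\mathrm{Sub}(m,m),z)\bigr). \]
Substituting $h(z)$ for $\mathrm{Sub}(m,m)$ under the equality just established, via modus maximus ($R_{1}$), yields $\cVvdash w\in h(z)\leftrightarrow\mathbb{T}\mathbb{T}A(w,h(z),z)$. A final application of the $Z$-regulation $R_{Z}$ universalizes over the noemata $w$ and $z$, producing the required thesis.

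The hard step will be the second paragraph: verifying that $\mathrm{Sub}(m,m)=h(z)$ is genuinely a maxim of \pounds \ and not merely a metatheoretic numerical identity. Numerically the equality holds by the defining property of $\mathrm{Sub}$ together with the identification of formations with numbers, but lifting it to a maxim requires a careful interplay of the coding conventions of Section \ref{Formations as numbers}, the Leibnizian--Russellian identity, and the alphabetological-variant policy built into $\texttt{P}6$. In particular, one must ensure that $\mathrm{Sub}$ targets the noema $x$ while leaving $z$ untouched, so that the parameter $z$ remains free throughout the diagonal, and then justify the substitutivity of $\mathrm{Sub}(m,m)$ by $h(z)$ inside the $\mathbb{T}\mathbb{T}A(\cdot,\cdot,\cdot)$ context --- this is where the power of the coding layout of Section \ref{Formations as numbers} is really called upon.
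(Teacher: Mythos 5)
Your route through G\"odel coding and $\mathrm{Sub}$ does not deliver the theorem as stated, and the failure is exactly at the step you flag as hard: the claimed cancellation $\cVvdash \mathrm{Sub}(m,m)=h(z)$. There are two problems. First, your $m=\Godelnum{B(x,z)}$ is the code of a \emph{term}, and $\mathrm{Sub}$ is by its definition in Section \ref{substitution} the identity on anything that is not the code of a formula with a noema, so $\mathrm{Sub}(m,m)=m$ and the diagonal never gets started; you would have to diagonalize on a formula (as in Theorem \ref{CarnapGodel}), extend $\mathrm{Sub}$ to codes of terms, and also arrange the noema indices so that $\mathrm{Sub}$ hits $x$ rather than the parameter $z$ (it targets the least-indexed noema). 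Second, and more fundamentally, even after such repairs what the diagonal construction yields is a provable identity between $\mathrm{Sub}(m,m)$ and the \emph{code} $\Godelnum{h(z)}$, a numeral, not the term $h(z)$ itself. The formations-as-numbers policy and Remark \ref{codingremark} give a correspondence between \textbf{L}'s numbers-as-formations and \pounds's codes of them; they do not make $h(z)=\Godelnum{h(z)}$ a thesis of \pounds: the abstract $\{w\mid\mathbb{T}A(w,\ldots)\}$ and the numeral coding it are different formations, and neither the Leibnizian--Russellian identity nor the alphabetological-variant principle $\texttt{P}6$ identifies them. So after alethic comprehension and substitutivity you obtain at best $\cVvdash\forall w,z(w\in h(z)\leftrightarrow\mathbf{TT}A(w,\Godelnum{h(z)},z))$, a coded fixed point in the style of the diagonal lemma, whereas the theorem requires the term $h(z)$ itself in the second argument place; the truth prescription $\mathfrak{T}_{\mathrm{M}}$ relates $\mathbb{T}$ of a formula to $\mathrm{T}$ of its code, but it does not convert an occurrence of $\Godelnum{h(z)}$ in argument position into $h(z)$.

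The paper's own proof avoids coding altogether: it sets $d=\{\langle x,g\rangle\mid A(x,\{u\mid\langle u,g\rangle\in g\},z)\}$ and $h(z)=\{y\mid\langle y,d\rangle\in d\}$, so the self-reference is literal --- instantiating $g$ by $d$ makes the inner abstract $\{u\mid\langle u,d\rangle\in d\}$ be $h(z)$ itself (up to the bound noema) --- and two applications of $\mathrlap{\mathbf{C}}A_{\mathrm{M}}$ produce exactly $\mathbf{TT}A(w,h(z),z)$, with the parameter $z$ carried along and generalization applied at the end since only generalizable prescriptions were used. If you want to salvage a coding-based argument, you need a strong diagonalization producing a term provably identical, in \pounds's sense, with the abstract in which it occurs; that is precisely what the self-application construction gives for free, which is why the manifestation-point technique rather than the Carnap--G\"odel lemma is the right tool here.
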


\begin{proof}[Proof:]
Let $d=\{\langle x,g \rangle|A(x,\{u|\langle u,g \rangle\in g\},z)\}$ and $h(z)=\{y|\langle y,d \rangle\in
d\}$. If we spell out, we have: $h(z)=\{y|\langle y,\{\langle x,g \rangle|A(x,\{u|\langle u,g \rangle\in
g\},z)\}\rangle\in\{\langle x,g \rangle|A(x,\{u|\langle u,g \rangle\in g\},z)\}\}$. By $\mathrlap{\mathbf{C}}A_{\mathrm{M}} $
we have that $\cVvdash\forall w(w\in h(z)\leftrightarrow\mathbb{T}%
\langle w,\{\langle x,g \rangle|A(x,\{u|\langle u,g \rangle\in g\},z)\}\rangle\in\{\langle x,g \rangle|A(x,\{u|\langle u,g \rangle\in g\},z)\})$,
and so by $\mathrlap{\mathbf{C}}A_{\mathrm{M}} $ again it follows that $\cVvdash\forall w(w\in
h(z)\leftrightarrow\mathbf{TT}A(w,\{u|\langle u,\{\langle x,g \rangle|A(x,\{u|\langle u,g \rangle\in g\},z)\}\rangle\in
\{\langle x,g \rangle|A(x,\{u|\langle u,g \rangle\in g\},z)\}\},z)$. But this is $\cVvdash\forall w(w\in
h(z)\leftrightarrow\mathbf{TT}A(w,h(z),z))$, as wanted.  As we only invoked generalizable prescriptions also $\cVvdash\forall w,z(w\in
h(z)\leftrightarrow\mathbf{TT}A(w,h(z),z))$. Obviously, we can have more than one parameter or a vector of parameters in $A$.
\end{proof}

\section{Defying an Orthodox Identity}

Theorem 5 of \citep{Frode2012a} implies that \pounds \ is highly non-extensional in that for any kind $a$ there are infinitely many mutually distinct kinds coextensional with $a$. The main result of this section has the same as consequence for non-empty kinds, but is a more general result and based upon different considerations.

\begin{lemma} $\cVvdash a=\{x|x \in a\}$ only if a is kind. \end{lemma}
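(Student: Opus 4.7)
The strategy is to show, for each noema $c$, the maxim $\cVvdash \mathbb{T}(c \in a) \vee \mathbb{T}(c \notin a)$, and then to invoke the $Z$-regulation $R_Z$ to conclude $\cVvdash KIND(a)$.

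The first step is to push the hypothesis through Leibnizian--Russellian identity. With $u_0 = \{y \mid c \notin y\}$, instantiating $\cVvdash a = \{x \mid x \in a\}$ in its symmetric form $\cVvdash \{x \mid x \in a\} = a$ (symmetry being inherited at the maximal level via the derivation sketched in \citep{Frode2012a}, §4) yields
$$\cVvdash \{x \mid x \in a\} \in u_0 \to a \in u_0.$$
Two applications of alethic comprehension $\mathrlap{\mathbf{C}}A_{\mathrm{M}}$ --- the outer at $u_0$, the inner at $\{x \mid x \in a\}$ --- together with $\pounds^1_{\mathrm{M}}$ to slide $\mathbb{T}$ across the definitional equivalence $c \notin \{x \mid x \in a\} \leftrightarrow \lnot \mathbb{T}(c \in a)$, convert this implication into the maxim
$$\mathbb{T}\lnot \mathbb{T}(c \in a) \to \mathbb{T}(c \notin a).$$

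The second step is a case analysis on $\mathbb{T}(c \in a)$. By the stabilization of the process at the closure ordinal $\fat \Koppa$ (Theorem \ref{Herzberger}(ii) and the observations following it), for any sentence $A$ the sentence $\mathbb{T}A$ is stable past $\fat \Koppa$, so either $\cVvdash \mathbb{T}A$ or $\cVvdash \lnot \mathbb{T}A$ --- $\mathbb{T}$-sentences are never minor. Applied at $A = c \in a$, in the case $\cVvdash \mathbb{T}(c \in a)$ the target disjunction is immediate; in the case $\cVvdash \lnot \mathbb{T}(c \in a)$, modus ascendens maximus $R_4$ upgrades this to $\cVvdash \mathbb{T}\lnot \mathbb{T}(c \in a)$, whence modus maximus $R_1$ applied to the implication derived in the first step gives $\cVvdash \mathbb{T}(c \notin a)$. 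Either way we obtain $\cVvdash \mathbb{T}(c \in a) \vee \mathbb{T}(c \notin a)$. Since $c$ was an arbitrary noema, $R_Z$ delivers $\cVvdash \forall x(\mathbb{T}(x \in a) \vee \mathbb{T}(x \notin a))$, i.e.\ $\cVvdash KIND(a)$.

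The main obstacle I anticipate is the bookkeeping in the first step: one must apply alethic comprehension at two nested levels so as to land precisely on $\mathbb{T}\lnot \mathbb{T}(c \in a)$ on one side and on $\mathbb{T}(c \notin a)$ on the other, and one genuinely needs symmetry of $=$ to obtain the implication in the direction $\{x \mid x \in a\} \in u_0 \to a \in u_0$, not merely the direction supplied immediately by the Leibnizian definition. A subsidiary but indispensable point is the semantic claim that $\mathbb{T}$-sentences are never paradoxical, which underwrites the binary case split; this is not explicitly posited in Section \ref{prescriptions} but follows from the stabilization behaviour of $\vDash$ at $\fat \Koppa$ recorded in the paragraphs just after Theorem \ref{Herzberger}.
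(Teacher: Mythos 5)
There is a genuine gap, and it sits exactly where you flagged it as "subsidiary but indispensable": the claim that $\mathbb{T}$-sentences are never minor, i.e.\ that for every sentence $A$ either $\cVvdash \mathbb{T}A$ or $\cVvdash \lnot\mathbb{T}A$. This is false in \pounds, and the stabilization theorem does not deliver it. Stabilization at $\fat\Koppa$ says that every member of $\vDash_{\fat\Koppa}$ (every $A$ with $\fat\Koppa \vDash \mathbb{T}A$) is stably in; it says nothing about sentences of the \emph{form} $\mathbb{T}A$ being stable. When $A$ is paradoxical, $\mathbb{T}A$ oscillates below $\fat\Koppa$ essentially in step with $A$ (true just past stages where $A$ holds, false at limits and elsewhere), so neither $\mathbb{T}\mathbb{T}A$ nor $\mathbb{T}\lnot\mathbb{T}A$ holds at $\fat\Koppa$ and $\mathbb{T}A$ is itself a minor. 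This is codified in the paper: regulation $R_{5}$ (\emph{modus ascendens minor}) sends minors to minors under $\mathbb{T}$, and $\pounds^{9}_{m}$ is listed as a minor prescription precisely because $A \rightarrow \mathbb{T}A$ has paradoxical instances. So your case split ``either $\cVvdash \mathbb{T}(c\in a)$ or $\cVvdash \lnot\mathbb{T}(c\in a)$'' is unjustified for exactly the problematic terms $c$ (those for which $c \in a$ could be paradoxical), which is the whole content of the lemma; in the bad case you would only get theses via \emph{modus subiunctionis}, not the maxim you need.

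The dichotomy you want is available, but only \emph{under the hypothesis} $\cVvdash a=\{x|x\in a\}$, and the instrument for extracting it is the L\"ob-like prescription $\pounds^{5}_{\mathrm{M}}$: $\mathbb{T}(\mathbb{T}A\rightarrow A)\rightarrow \mathbb{T}A\vee\mathbb{T}\lnot A$. The paper's proof runs: from the identity hypothesis, substitution of identicals together with alethic comprehension $\mathrlap{\mathbf{C}}A_{\mathrm{M}}$ gives $\cVvdash\forall x\,\mathbb{T}(\mathbb{T}x\in a\leftrightarrow x\in a)$, hence $\cVvdash\forall x\,\mathbb{T}(\mathbb{T}x\in a\rightarrow x\in a)$, and then $\pounds^{5}_{\mathrm{M}}$ plus classical logic yields $\cVvdash\forall x(\mathbb{T}x\in a\vee\mathbb{T}x\notin a)$, i.e.\ $KIND(a)$. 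Your first step (the maxim $\mathbb{T}\lnot\mathbb{T}(c\in a)\rightarrow\mathbb{T}(c\notin a)$ via the instance $u_{0}=\{y\,|\,c\notin y\}$ and symmetry of the Leibnizian identity) is workable but heavier than needed; replacing your second step by an appeal to $\pounds^{5}_{\mathrm{M}}$ applied to the maxim $\mathbb{T}(\mathbb{T}x\in a\rightarrow x\in a)$ repairs the argument and in effect reproduces the paper's proof, also making the detour through $R_{Z}$ unnecessary since the prescriptions are presupposed with all generalizations.
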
 

\begin{proof}[Proof:] Suppose $\cVvdash a=\{x|x \in a\}$. By substitution of identicals and alethic comprehension $\mathrlap{\mathbf{C}}A_{\mathrm{M}} $, $\cVvdash\forall x\mathbb{T}(\mathbb{T}x \in a\leftrightarrow x \in a)$, so $\cVvdash\forall x\mathbb{T}(\mathbb{T}x \in a\rightarrow x \in a)$. By $\pounds^5_M$  $\cVvdash\forall x(\mathbb{T}(\mathbb{T}x \in a\rightarrow x \in a)\rightarrow (\mathbb{T}x \in a\mathbf{ \vee T}x \notin a)) $, so by classical logic $\cVvdash\forall x(\mathbb{T}x \in a\mathbf{ \vee T}x \notin a)$ which by the terminology of \citep{Frode2012a} means that $a$ is kind. \end{proof}

\begin{lemma} \ \ \phantomsection\label{first} If $\alpha \vDash a=b$ and $\alpha \succ 0$   then $\alpha \vDash \mathbb{T}a=b$ \end{lemma}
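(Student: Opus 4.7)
The natural move is to exploit the Leibnizian–Russellian definition $a=b \triangleq \forall u(a \in u \rightarrow b \in u)$ by instantiating the universally quantified $u$ with the clever term $u \triangleq \{y \mid a=y\}$. Via P4, from $\alpha \vDash a=b$ we obtain $\alpha \vDash a \in \{y \mid a=y\} \rightarrow b \in \{y \mid a=y\}$; if we can show $\alpha \vDash a \in \{y \mid a=y\}$, then by the material implication semantics (recorded through P2 on the joint) we conclude $\alpha \vDash b \in \{y \mid a=y\}$, and unfolding by P1 delivers precisely the stability condition that defines $\alpha \vDash \mathbb{T}(a=b)$.

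Concretely, I would carry out the argument in the following order. First, observe that $a=a$, namely $\forall u(a \in u \rightarrow a \in u)$, is satisfied at every ordinal: by P4 we need $\gamma \vDash a \in c \rightarrow a \in c$ for every substitutable $c$, and unwinding the definition of $\rightarrow$ through P2 (so that $\gamma \vDash A \rightarrow B$ iff $\gamma \nvDash A$ or $\gamma \vDash B$) this is the classical tautology $X \rightarrow X$, trivially valid at each $\gamma$, including $\gamma=0$. Second, apply P1 to the formula $a \in \{y \mid a=y\}$: since $\alpha \succ 0$, we can take $\beta = 0$, and then for every $\gamma$ with $0 \preceq \gamma \prec \alpha$ we have $\gamma \vDash (a=y)(a/y) = a=a$ by the previous step; hence $\alpha \vDash a \in \{y \mid a=y\}$ (modulo the routine verification that $a$ is free for $y$ in $a=y$). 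Third, combine this with the Leibniz instance to conclude $\alpha \vDash b \in \{y \mid a=y\}$. Fourth, read this back through P1: there exists $\beta' \prec \alpha$ such that for every $\gamma \in [\beta',\alpha)$, $\gamma \vDash (a=y)(b/y) = a=b$. Finally, unpack $\mathbb{T}(a=b) \triangleq \exists y(y \in \{y \mid a=b\})$ via P4 (existential) and P1: some term witnessing the existential is any $c$ for which $\alpha \vDash c \in \{y \mid a=b\}$, and since $y$ is not present in $a=b$ the relevant instance $A(c/y)$ is just $a=b$; the very clause we obtained is exactly this, so $\alpha \vDash \mathbb{T}(a=b)$.

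The only place the hypothesis $\alpha \succ 0$ is used is in step two, where P1 requires some $\beta \prec \alpha$; once we have $\beta = 0$, the fact that $a=a$ holds at every ordinal (including $0$, thanks to the classical behaviour of the joint) makes the interval condition vacuously available. The main delicacy — more bookkeeping than obstacle — is keeping track of the substitutability side conditions $\fat F(\cdot,\cdot,\cdot)$ demanded by P1, P4, and the definitional expansion of $\mathbb{T}$, and verifying that the noema $y$ chosen in unfolding $\mathbb{T}(a=b)$ satisfies the freshness clause in D19. Everything else is mechanical application of P1, P2, and P4 to the well-chosen witness $\{y \mid a=y\}$.
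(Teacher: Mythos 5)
Your proposal is correct and follows essentially the same route as the paper's (very terse) proof, which likewise rests on the Leibnizian--Russellian definition of identity together with the fact that $\alpha \vDash a \in \{x|x=a\}$ for $\alpha \succ 0$, unfolded through P1 and P4. You merely spell out in full what the paper delegates to Lemma 1 of the cited earlier work, including the point that the hypothesis $\alpha \succ 0$ is exactly what licenses the P1 step for the identity abstract.
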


\begin{proof}[Proof:] As for Lemma 1 of \cite{Frode2012a}, and using the librationist definition of identity and the fact that $\alpha\vDash a \in \{x|x=a\}$. \end{proof}

\begin{lemma} \label{second} $\alpha\vDash a=b$ and $\beta \prec \alpha$ only if $\beta \vDash a=b$ \end{lemma}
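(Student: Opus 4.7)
The plan is to proceed by transfinite induction on $\alpha$. The base case $\alpha = 0$ is vacuous, since there is no $\beta \prec 0$. For the inductive step, assume the statement for every $\alpha' \prec \alpha$, and suppose $\alpha \vDash a = b$ together with $\beta \prec \alpha$. Since $\beta \prec \alpha$ we have $\alpha \succ 0$, so Lemma \ref{first} applies and yields $\alpha \vDash \mathbb{T}(a=b)$.

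Next I unfold $\mathbb{T}$. By D19, $\mathbb{T}(a=b)$ is $\exists y(y \in \{y|a=b\})$ for a suitably chosen noema $y$ not present in $a=b$. The semantic clauses $\mathrm{P}2$ and $\mathrm{P}4$ (together with the definition $\exists yA \triangleq \lnot\forall y\lnot A$) give a term $c$, free for $y$, with $\alpha \vDash c \in \{y|a=b\}$. Applying $\mathrm{P}1$, and using that $y \fat\notin \textcjheb{n}(a=b)$ so that $(a=b)(c/y)$ is just $a=b$, produces an ordinal $\beta_0 \prec \alpha$ such that $\gamma \vDash a=b$ for every $\gamma$ with $\beta_0 \preceq \gamma \prec \alpha$.

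It remains to split on the position of $\beta$ relative to $\beta_0$. If $\beta_0 \preceq \beta$, the final-segment clause immediately yields $\beta \vDash a = b$. If instead $\beta \prec \beta_0$, then taking $\gamma := \beta_0$ in the final-segment clause gives $\beta_0 \vDash a = b$, and since $\beta_0 \prec \alpha$ the inductive hypothesis applies to $\beta_0$ and delivers $\beta \vDash a = b$ as required.

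The main obstacle is conceptual rather than computational: Lemma \ref{first} alone gives only persistence on \emph{some} final segment below $\alpha$, not persistence all the way down to $0$, and the lemma we are proving demands the latter. The transfinite induction on $\alpha$ is precisely the device that closes this gap, by reapplying the final-segment fact at the strictly smaller ordinal $\beta_0$ whenever $\beta$ lies below the threshold $\beta_0$ supplied by Lemma \ref{first}.
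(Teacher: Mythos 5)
Your proof is correct and follows essentially the same route as the paper's: apply Lemma \ref{first} to obtain $\alpha \vDash \mathbb{T}a=b$, unfold via D19 and \texttt{P}1 to get a final segment $[\beta_0,\alpha)$ on which $a=b$ holds, and then push below $\beta_0$ using wellfoundedness. The only difference is presentational — where the paper says ``by the wellfoundedness of ordinals one can iterate the process,'' you make that iteration explicit as a transfinite induction on $\alpha$, which is a faithful formalization of the same idea.
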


\begin{proof}[Proof:] Suppose $\alpha \vDash a=b$ and $\beta\fat\prec\alpha$ for some ordinal $\beta$. It follows that $\alpha\fat\succ 0$, so that by Lemma \ref{first} then $\alpha \vDash \mathbb{T}a=b$. It follows that for some $\delta \fat\prec \alpha$, for all $\alpha\fat\succ\gamma \fat\succeq\delta$, $\gamma \vDash a=b$; in particular $\delta \vDash a=b$. By the wellfoundedness of ordinals one can iterate the process to any ordinal $\beta$ below $\alpha$.  \end{proof}

\begin{lemma} \label{third} If $\cVvdash a=\{x|x \in a\}$ and $\beta \prec \koppa$ then $\beta \vDash \forall x (x \notin a)$\end{lemma}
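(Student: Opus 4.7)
The plan is to carry out a transfinite induction on $\beta \prec \fat \Koppa$. The hypothesis $\cVvdash a = \{x|x\in a\}$ unpacks as $\fat \Koppa \vDash \mathbb{T}(a = \{x|x\in a\})$, so by the semantic clause for $\mathbb{T}$ the identity $a = \{x|x\in a\}$ is stably true on some final segment $[\delta_0, \fat \Koppa)$; Lemma~\ref{second} then propagates it downward, giving $\gamma \vDash a = \{x|x\in a\}$ at every $\gamma \prec \fat \Koppa$. Writing $b$ for $\{x|x\in a\}$ as a shorthand, I will establish by induction on $\beta \prec \fat \Koppa$ the conjunction of $(\text{A})~\beta \vDash \forall x(x \notin b)$ and $(\text{B})~\beta \vDash \forall x(x \notin a)$.

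Conjunct (A) is immediate from principle $\mathrm{P}1$ once (B) is available at earlier stages: at $\beta = 0$ no ordinal witnesses membership in a class abstract; at a successor $\gamma+1$, $\gamma+1 \vDash t \in b$ reduces to $\gamma \vDash t \in a$, blocked by the inductive hypothesis (B) at $\gamma$; at a limit $\beta$, membership requires eventual stable truth of $t \in a$ below $\beta$, blocked by (B) throughout the initial segment. For conjunct (B) the key device is the class abstract $u = \{y \mid \forall x(x \notin y)\}$: by $\mathrlap{\mathbf{C}}A_{\mathrm{M}}$, $\beta \vDash c \in u \leftrightarrow \mathbb{T}(\forall x(x \notin c))$ for every term $c$. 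From (A) at every $\gamma \prec \beta$ we get that $\forall x(x \notin b)$ is stably true on $[0,\beta)$, so $\beta \vDash b \in u$; then, using $\beta \vDash a = b$ together with the symmetry of Leibnizian identity derived as in Lemma~\ref{first} for $\beta \succ 0$, Leibniz's law yields $\beta \vDash a \in u$, i.e.\ $\beta \vDash \mathbb{T}(\forall x(x \notin a))$. Combined with the inductive hypothesis (B) providing stable truth of $\forall x(x \notin a)$ throughout $[0, \beta)$ and the semantic behaviour of $a$ at successors and limits, this should be converted into $\beta \vDash \forall x(x \notin a)$.

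The hard part will be the base case $\beta = 0$ and, relatedly, the step that converts the $\mathbb{T}$-wrapped conclusion $\beta \vDash \mathbb{T}(\forall x(x \notin a))$ into the untrimmed $\beta \vDash \forall x(x \notin a)$. At stage $0$ every $\mathbb{T}$-statement is vacuously false, so the substitutivity argument through $u$ is unavailable; here one must reason directly using the convention recorded at the end of Section~\ref{The semantics of pounds} that every term is empty or universal at $0$, arguing that universality of $a$ at $0$ would force $b$ to become universal from stage $1$ onward and hence conflict with the emptiness of $b$ at $0$ that the fixed-point identity is propagating up to $\fat \Koppa$. A careful bookkeeping of these semantic subtleties at stage~$0$ and of how Leibnizian-Russellian identity is being used at limits, together with the symmetry and descent tools made available by Lemmas~\ref{first} and~\ref{second}, should suffice to close the argument without new machinery.
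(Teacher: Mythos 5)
Your induction never actually reaches stage $\beta$ itself, and that is where the whole content of the lemma lies. Everything your clause (B) machinery delivers at $\beta$ --- $\beta \vDash b \in u$, then $\beta \vDash a \in u$, i.e.\ $\beta \vDash \mathbb{T}\,\forall x(x \notin a)$ --- is, by principle $\mathrm{P}1$, nothing more than a restatement of the induction hypothesis about stages strictly below $\beta$: membership in the abstract $u=\{y\,|\,\forall x(x\notin y)\}$ at $\beta$ just means stable emptiness of $a$ on an interval below $\beta$. The passage you then need, from $\beta \vDash \mathbb{T}\,\forall x(x\notin a)$ to $\beta \vDash \forall x(x\notin a)$, is not a bookkeeping matter and is not valid as a general principle: $a$ is an arbitrary term, and its extension at stage $\beta$ is not determined by its extensions at earlier stages. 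If $a$ is of positive caliber, say $a=\,\downarrow\!cd$, then by $\mathrm{P}3$ its members at $\beta$ are fixed by the extensions of $c$ and $d$ at $\beta$ itself; if $a$ is $\mathrm{T}$, \euro\ or a noema, clauses $\mathrm{P}5$--$\mathrm{P}7$ likewise look at stage $\beta$. So the ``semantic behaviour of $a$ at successors and limits'' that you invoke to close the step does not exist as a downward recursion, and the induction collapses exactly at its crux. The base case has the same defect: if $a$ is universal at $0$ then $\{x|x\in a\}$ is universal from stage $1$ on, but that clashes with nothing you have established --- emptiness of $\{x|x\in a\}$ at $0$ is automatic (caliber zero) and perfectly compatible with universality at $1$ --- so no contradiction is actually derived there either. (A smaller slip: Lemma~\ref{first} yields $\alpha \vDash \mathbb{T}\,a=b$, not symmetry of the identity; symmetry has to come from the Leibnizian--Russellian identity theory itself.)

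What is needed, and what the paper does, is to exploit the hypothesis $\cVvdash a=\{x|x\in a\}$ at the very stage where a putative member appears, by a reductio with ordinal descent rather than a stagewise induction: suppose $\beta \vDash b\in a$ with $\beta$ below the closure ordinal; using Lemma~\ref{second} and the theory of identity one gets $\beta \vDash b\in\{x|x\in a\}$, hence by comprehension/$\mathrm{P}1$ $\beta \vDash \mathbb{T}\,b\in a$, so the same witness $b$ is a member of $a$ already at some $\delta\prec\beta$; wellfoundedness drives this down to $0\vDash b\in a$. Then $1\vDash a\in\{x|b\in x\}$ by $\mathrm{P}1$, and Leibniz's law applied in the unproblematic direction ($a\in u\rightarrow\{x|x\in a\}\in u$, so no symmetry is needed) gives $1\vDash\{x|x\in a\}\in\{x|b\in x\}$, whence $0\vDash b\in\{x|x\in a\}$ --- impossible, since a caliber-zero abstract is empty at $0$ by $\mathrm{P}1$ and Definition~\ref{caliberdef}. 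If you wish to keep your inductive framing you would still have to import this descent argument to get from emptiness below $\beta$ to emptiness at $\beta$; as it stands, the proposal does not prove the lemma.
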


\begin{proof}[Proof:] 
Suppose to the contrary that for some $b$ and $\beta$: $\cVvdash a=\{x|x \in a\}$, $\beta \fat\prec \koppa$ and $\beta \vDash b \in a$. By Lemma \ref{second} and the theory of identity, $\beta \vDash b \in \{x|x \in  a\}$; by alethic comprehension $\mathrlap{\mathbf{C}}A_{\mathrm{M}} $, $\beta \vDash \mathbb{T} b \in a$; so for some $\delta \fat\prec \beta$, $\delta \vDash b \in a$. But this could be iterated so that we got $0 \vDash b \in a$, so that $1 \vDash a \in \{x|b\in x\}$. But by Lemma \ref{second} again and our theory of identity, then also $1 \vDash \{x|x\in a\} \in \{x|b\in x\}$, so that  $0 \vDash b \in \{x|x\in a\}$; but the latter statement is impossible on account of principle $P(1)$ of our semantics as $0$ is the smallest ordinal and $\{x|x\in a\}$ is of caliber zero (see Definition \ref{caliberdef}).\end{proof}

\begin{lemma} \label{fourth} If for all $\beta \fat\prec \koppa$, $\beta \vDash \forall x (x \notin a)$, then $\cVvdash a=\{x|x \in a\}$ \end{lemma}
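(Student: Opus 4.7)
Set $e \triangleq \{x \mid x \in a\}$; the target is $\koppa \vDash \mathbb{T}(a = e)$, which by \texttt{P}$1$ applied to the abstraction in D19 unpacks as requiring some $\beta \prec \koppa$ such that $\gamma \vDash a = e$ throughout $[\beta, \koppa)$. First I would observe that $e$ too is empty at every $\beta \prec \koppa$: by \texttt{P}$1$, $\beta \vDash b \in e$ would demand some $\delta \prec \beta$ with $\gamma \vDash b \in a$ throughout $[\delta, \beta)$, and the hypothesis rules this out. Hence $\beta \vDash \forall x(x \notin e)$ holds for all $\beta \prec \koppa$, placing $a$ and $e$ on equal extensional footing throughout the semantic process below $\koppa$.

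The heart of the proof is a transfinite induction on $\gamma$, with $0 \prec \gamma \prec \koppa$, establishing $\gamma \vDash a = e$. Unfolding the Leibnizian-Russellian identity reduces this to verifying $\gamma \vDash a \in c \rightarrow e \in c$ for every term $c$ substitutable for $u$. When $c = \{u|A\}$, clause \texttt{P}$1$ reduces $\gamma \vDash a \in c$ to the stable truth of $A(a/u)$ on some final segment of $\gamma$; the inductive hypothesis supplies $\delta \vDash a = e$ on that segment, and since each ordinal stage is classical, a subsidiary structural induction on $A$ propagates that identity to yield $\delta \vDash A(a/u) \leftrightarrow A(e/u)$, whence $A(e/u)$ is stably true on the same segment and $\gamma \vDash e \in c$. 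For non-abstraction terms such as $\mathrm{T}$ and \mbox{\euro}, clauses \texttt{P}$5$ and \texttt{P}$7$ again reduce matters to identifications at earlier ordinals. The base case $\gamma = 1$ exploits the initialization at level $0$: every term is either empty or universal there and both $a$ and $e$ are empty, so a direct induction on the complexity of $A$ gives $0 \vDash A(a/u) \leftrightarrow 0 \vDash A(e/u)$.

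Once $\gamma \vDash a = e$ has been established at every $\gamma \in [1, \koppa)$, the stable-truth characterization recalled in the first paragraph yields $\koppa \vDash \mathbb{T}(a = e)$, i.e.\ $\cVvdash a = e$. The principal obstacle is the inductive step: the Leibnizian-Russellian clause licenses substitution only within the first-argument position of $\in$, so propagating semantic identity through compound clauses \texttt{P}$1$--\texttt{P}$4$, and through bound occurrences inside nested abstractions, requires the subsidiary structural induction just mentioned, which is routine but delicate. The conceptual point is that the emptiness hypothesis forces $a$ and $e$ to be semantically indistinguishable at every level below $\koppa$, so the identity can be pushed up to the closure ordinal without any appeal to extensionality --- exactly what the non-extensionality of \pounds\ otherwise obstructs.
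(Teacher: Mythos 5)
Your proposal is correct and is in essence the paper's own argument: both rest on the observation that $\{x\mid x\in a\}$ is likewise empty at every stage below $\fat\Koppa$, that all identities hold at stage $0$, and that the resulting stage-by-stage indistinguishability of $a$ and $\{x\mid x\in a\}$ (including under iterations of $\mathbb{T}$ and through clauses \texttt{P}$1$--\texttt{P}$7$) prevents the identity from ever failing below $\fat\Koppa$, whence it is a maxim. The only difference is packaging --- you run the direct transfinite induction, while the paper argues by reductio from a least ordinal at which $a\neq\{x\mid x\in a\}$ would first hold --- and your explicit carrying of the two-way substitutivity through a structural induction is, if anything, a slightly more detailed rendering of the paper's ``nothing below $\gamma$ justifies a sudden change'' step.
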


\begin{proof}[Proof:] Suppose to the contrary.  By the logic of identity for all $\beta \fat\prec \koppa$, $\beta \vDash \forall x (x \notin a)$ and $\cVvdash a\neq\{x|x \in a\}$. If for all $\beta \fat\prec \koppa$, $\beta \vDash \forall x (x \notin a)$ also for all $\beta \fat\prec \koppa$, $\beta \vDash \forall x (x \notin \{x|x \in a\})$; this we can infer without invoking the identity between $a$ and $\{x|x \in a\}$. $0\vDash a=\{x|x \in a\}$, as $0\vDash b=c$ for all terms $b$ and $c$. As $\cVvdash a\neq\{x|x \in a\}$ there must be a first ordinal $\gamma$ such that $\gamma\vDash a\neq \{x|x \in a\}$. Let $\delta$ be any ordinal smaller than $\gamma$. From the information we have $\delta\vDash b\in a \leftrightarrow b\in \{x|x\in a\}$ and $\delta\vDash a\in b \leftrightarrow \{x|x\in a\}\in b$ for all terms $b$; moreover, such biconditionals also hold within any number of applications of the defined truth operator $\mathbb{T}$. Consequently, there cannot be anything below $\gamma$ which justifies a sudden change to non-identity between $a$ and $\{x|x\in a\}$. \end{proof} 

\begin{lemma} \label{fifth}If for all $\beta \fat\prec \koppa$, $\beta \vDash \forall x (x \notin a)$, then $\cVvdash a= \emptyset$ \end{lemma}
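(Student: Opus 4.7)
The plan is to adapt the argument of Lemma \ref{fourth} verbatim, with $\emptyset$ playing the role previously played by $\{x\,|\,x\in a\}$. The essential feature of $\{x\,|\,x\in a\}$ used in that proof was not its specific syntactic form, but that it too is persistently empty below $\koppa$ whenever $a$ is. Whatever canonical definition of $\emptyset$ is adopted (for instance $\{x\,|\,x\neq x\}$), one checks directly from $\mathrlap{\mathbf{C}}A_{\mathrm{M}}$ and the semantic principles $\mathrm{P}1$--$\mathrm{P}4$ that $\beta \vDash \forall x(x\notin \emptyset)$ for every $\beta$, since the defining condition never stabilises positively. So under the hypothesis of the lemma, $a$ and $\emptyset$ are both stably empty at every stage strictly below $\koppa$.

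Granted this, I would proceed by contradiction: assume $\cVvdash a\neq\emptyset$, so that $\koppa\vDash a\neq\emptyset$, and let $\gamma$ be the least ordinal with $\gamma \vDash a\neq\emptyset$. As in the proof of Lemma \ref{fourth}, since $0\vDash b=c$ holds for all terms $b,c$, we must have $0\prec\gamma$. For every $\delta\prec\gamma$ the minimality of $\gamma$ gives $\delta\vDash a=\emptyset$, and by the Leibnizian-Russellian definition of identity this propagates to $\delta\vDash b\in a \leftrightarrow b\in\emptyset$ and $\delta\vDash a\in b \leftrightarrow \emptyset\in b$ for every term $b$, and likewise for these biconditionals placed inside arbitrarily many nestings of $\mathbb{T}$ (both $b\in a$ and $b\in\emptyset$ being stably false throughout the relevant range). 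Hence, at every stage below $\gamma$, the two terms $a$ and $\emptyset$ are indistinguishable by any defining abstraction, so by the clauses $\mathrm{P}1$--$\mathrm{P}7$ there is nothing in the semantic history below $\gamma$ that can cause $\gamma$ itself to witness $a\neq\emptyset$. This contradiction yields $\cVvdash a=\emptyset$.

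The step I expect to be the main obstacle is the last one, the formalisation of the ``no basis for change at $\gamma$'' inference; it rests on the well-foundedness of the ordinals together with the fact that each clause in $\mathrm{P}1$--$\mathrm{P}7$ for $\gamma\vDash a\in c$ refers only to the behaviour of formulas at stages strictly below $\gamma$, so that substitutivity of $a$ for $\emptyset$ below $\gamma$ compels agreement of $a\in c$ and $\emptyset\in c$ at $\gamma$ as well. Since exactly the same subtlety is absorbed into Lemma \ref{fourth}'s proof without further comment, no new machinery should be needed here; if a cleaner presentation were desired one could alternatively first prove by the same template that $\cVvdash \{x\,|\,x\in a\}=\emptyset$ and then combine with Lemma \ref{fourth} and transitivity of identity, but this would only relocate the same core argument.
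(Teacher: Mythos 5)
Your proposal is correct and is essentially the paper's own proof: the paper simply fixes $\emptyset = \{x|x\neq x\}$ and states that the argument is as for Lemma \ref{fourth}, which is exactly the adaptation you spell out (persistent emptiness of $\emptyset$ at all stages, least ordinal $\gamma$ witnessing $a\neq\emptyset$, and the substitutivity/no-basis-for-change argument below $\gamma$). No gap to report.
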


\begin{proof}[Proof:] Here $\emptyset = \{x|x\neq x\}$. The argument is as for Lemma \ref{fourth}. \end{proof}

\begin{theorem} $\cVvdash a=\{x|x \in a\}$ iff $\cVvdash a=\emptyset$ \label{surprise on identity} \end{theorem}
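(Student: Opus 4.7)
The statement is a concatenation of the four preceding lemmas together with a downward--propagation observation about identity, so my plan is to treat the two directions in parallel, each pivoting on the same intermediate assertion ``for every $\beta \fat\prec \koppa$, $\beta \vDash \forall x(x \notin a)$''.

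For the forward direction ($\Rightarrow$), the argument is essentially by composition. Assume $\cVvdash a = \{x|x \in a\}$. Lemma \ref{third} immediately supplies the pivot hypothesis, i.e.\ $\beta \vDash \forall x(x \notin a)$ for every $\beta \fat\prec \koppa$. Applying Lemma \ref{fifth} to this hypothesis yields $\cVvdash a = \emptyset$, as desired.

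For the backward direction ($\Leftarrow$), the plan is to reach the same pivot hypothesis and then invoke Lemma \ref{fourth}. Assume $\cVvdash a = \emptyset$, which unpacks to $\koppa \vDash \mathbb{T}(a = \emptyset)$, so by D19 and principle P1 the identity $a = \emptyset$ holds on a cofinal initial segment of stages below $\koppa$. Lemma \ref{second} propagates this downward, giving $\beta \vDash a = \emptyset$ for every $\beta \fat\prec \koppa$. Combining this with the independent fact that $\emptyset = \{x|x \neq x\}$ is empty at every ordinal below $\koppa$, the theory of identity yields $\beta \vDash \forall x(x \notin a)$ for each such $\beta$. Lemma \ref{fourth} then delivers $\cVvdash a = \{x|x \in a\}$.

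The only step that is not a citation is verifying that $\emptyset$ is empty at every stage $\beta \fat\prec \koppa$, and this is where I expect a little care to be required. The initial convention noted after Theorem \ref{Herzberger} gives $0 \vDash \forall x(x \notin \emptyset)$. For any later stage, applying principle P1 to the abstract $\{x|x \neq x\}$ reduces $\alpha \vDash b \in \emptyset$ to the existence of some $\delta \fat\prec \alpha$ with $\gamma \vDash b \neq b$ for every $\delta \fat\preceq \gamma \fat\prec \alpha$; but $\gamma \vDash b = b$ for every $\gamma \fat\succ 0$ by Leibnizian identity, so the required final segment cannot exist. Hence $\beta \vDash \forall x(x \notin \emptyset)$ for every $\beta \fat\prec \koppa$, which is precisely the ingredient needed to transfer emptiness from $\emptyset$ to $a$ via the identity $\beta \vDash a = \emptyset$. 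With this small semantic observation in hand the two directions fit together cleanly, and the theorem is established.
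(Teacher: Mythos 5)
Your proposal is correct and follows essentially the same route as the paper, whose proof consists precisely in combining the preceding lemmas: Lemma \ref{third} plus Lemma \ref{fifth} for one direction, and Lemma \ref{second} plus Lemma \ref{fourth} for the other. The only detail you add beyond bare citation—the stage-wise emptiness of $\emptyset=\{x|x\neq x\}$ via principle P1 and the validity of $b=b$ at every ordinal (which holds at $0$ as well, since $0\vDash b=c$ for all terms)—is exactly the kind of argument the paper itself deploys in the proofs of Lemmas \ref{third}--\ref{fifth}, so it fills in rather than deviates from the intended combination.
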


\begin{proof}[Proof:] The argument is by combining the preceding lemmas.\end{proof}

\medskip

The non-extensionality results of \citep{Frode2012a} and preceding results in earlier literature referred to there are surprising in the sense that they force us to revise the prejudice of extensionality in type free contexts by means of explicit conditions that convince us. Theorem \ref{surprise on identity} is different in that it surprises us in its statement of the very wide prevalence of non-extensionality in cases where one should not expect it at all. One may hold against this that some of these features are consequences of a perhaps arbitrary decision of having all terms of the form $\{x|A(x)\}$ empty at the ordinal $0$; but other choices would have had their own blend of arbritrary seeming consequences, and the author thinks that there are so good aesthetical reasons concerned with uniformity to support the semantical set up presupposed that it is not arbritrary.

\section{{\large Capturing Collection, Replacement, Specification and Choice}} \label{Aus}

We introduce \textit{ordinary} capture by the following definition, where $\alpha(x,y)$ is any first order condition on $x$ and $y$ as standardly conceived in classical set theory. 

\begin{definition}[Ordinary capture]~\\

\medskip

$\forall v\exists w\forall x(x\in w \leftrightarrow\exists y(y\in v \wedge \alpha(y,x) \wedge (\forall z)(\alpha(y,z)\rightarrow x=z)))$
\end{definition}

Let $W$ be Zermelo set theory $Z$ minus the axiom schema of specification.

\begin{theorem}
$ZF = W + ordinary \ capture$
\end{theorem}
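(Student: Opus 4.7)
The plan is to prove the two inclusions separately: first that $W + \text{ordinary capture} \vdash ZF$, and second that $ZF \vdash \text{ordinary capture}$. Since $W$ already contains the common axioms of Zermelo set theory except for specification, it suffices in the forward direction to derive the two schemas by which $ZF$ extends $W$, namely specification and replacement. The reverse direction is a short derivation of ordinary capture from replacement plus basic logic.

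For the forward direction, to obtain specification for a formula $\varphi(x)$ on a set $v$, I would instantiate ordinary capture with $\alpha(y,x) \equiv (y = x \wedge \varphi(x))$. The uniqueness conjunct $\forall z(\alpha(y,z) \to z = x)$ is then automatic, since $\alpha(y,z)$ forces $z = y = x$, so the captured set is exactly $\{x \in v : \varphi(x)\}$. To obtain replacement in its usual $ZF$ form, I would assume the functionality hypothesis $\forall y \in v\, \exists! x\, \alpha(y,x)$; under this hypothesis the uniqueness conjunct $\forall z(\alpha(y,z) \to z = x)$ becomes redundant for $y \in v$, so ordinary capture produces precisely the replacement image $\{x : \exists y \in v,\, \alpha(y,x)\}$.

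For the reverse direction, given any $v$ and formula $\alpha$, I would pass to the uniqueness-guarded formula $\beta(y,x) \equiv \alpha(y,x) \wedge \forall z(\alpha(y,z) \to z = x)$. This $\beta$ is everywhere functional in $y$, since for each $y$ at most one $x$ can satisfy $\beta(y,x)$. Using specification in $ZF$, restrict to $v' = \{y \in v : \exists x\, \beta(y,x)\}$; then $\beta$ defines a genuine function on $v'$, and replacement yields $w = \{x : \exists y \in v',\, \beta(y,x)\}$. Because no $x$ satisfies $\beta(y,\cdot)$ when $y \in v \setminus v'$, this $w$ fulfills exactly the biconditional demanded by ordinary capture.

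Both directions are essentially routine, but the one genuinely non-trivial observation is the collapse, under the functionality hypothesis, of the uniqueness-guarded form of ordinary capture into the standard replacement image — this is what makes a single schema strong enough to absorb both specification (where functionality is trivial thanks to the clause $y = x$) and replacement (where functionality is assumed). I would also note in passing that foundation plays no role here, so the equivalence holds whether or not foundation is counted among the axioms of $W$ and $ZF$, provided it is counted for both or neither.
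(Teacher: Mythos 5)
Your proof is correct and follows essentially the same route as the paper: specification via the instance $\alpha(y,x)\equiv(y=x\wedge\varphi(x))$, replacement via the observation that the uniqueness conjunct is redundant when $\alpha$ is functional, and capture from replacement via the uniqueness-guarded formula $\beta(y,x)=\alpha(y,x)\wedge\forall z(\alpha(y,z)\rightarrow x=z)$. Your added detail of restricting to $v'$ by specification merely spells out what the paper's terse ``$\beta$ is functional, so capture follows from replacement'' leaves implicit for the total-function form of replacement.
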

\begin{proof}[Proof:]
The condition $\beta(y,x)=\alpha(y,x) \wedge (\forall z)(\alpha(y,z)\rightarrow x=z)$ is functional, so capture follows from replacement. If $\alpha(y,x)$ is already functional the clause requiring only one set captured is redundant, so capture eintails replacement. Specification follows from capture by letting $\alpha(y,x)=\beta(y)\wedge y=x$. 
\end{proof}

We introduce \textit{librationist} capture by the following definition, where $\alpha$ is as above and the order relation $\bleq$ as in Definition \ref{deforder}:

\begin{definition}[Librationist capture]~\\

\medskip

If $a$ is a term, so is $\{x|\exists y(y\in a \wedge \alpha(y,x) \wedge (\forall z)(\alpha(y,z)\rightarrow x\bleq z)))\}$.
\end{definition}

\begin{proposition}[Librationist capture of choice]\label{capturechoice}~\\

\noindent If $c$ is a kind with only nonempty kinds whose intersections are pairwise empty for members not coextensional, then $\{x|\exists y(y\in c \wedge x\in y \wedge (\forall z)(z\in y\rightarrow x\bleq z)))\}$ is a kind  that, modulo extensionality, contains precisely one kind from each kind of $c$. So librationist capture entails full choice in those contexts it is operative.
\end{proposition}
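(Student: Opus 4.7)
The plan is to obtain this as a direct corollary of the Kind Choice Theorem proved earlier, augmenting its conclusion by means of the extra disjointness hypothesis. Two observations drive the argument: the enumerator \euro\ furnishes (by $\mbox{\euro}^1_\mathrm{M}$ through $\mbox{\euro}^5_\mathrm{M}$) a kind bijection between $\mathbb{N}$ and the full universe, so $\bleq$ from Definition \ref{deforder} is a definable well-ordering; and the defining condition for $S$ asks for the $\bleq$-least element of a member of $c$, which is canonically located.

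First I would verify that the hypothesis of the Kind Choice Theorem is satisfied: $c$ is a kind whose members are (nonempty) kinds. The Kind Choice Theorem then yields immediately that
\[
S \triangleq \{x \mid \exists y(y \in c \wedge x \in y \wedge \forall z(z \in y \rightarrow x \bleq z))\}
\]
is itself a kind, and it contains precisely one member from each $y \in c$, namely the $\bleq$-least element of $y$. This element exists and is unique because every element of $y$ has a unique \euro-number in $\mathbb{N}$ (by $\mbox{\euro}^1_\mathrm{M}$--$\mbox{\euro}^4_\mathrm{M}$), and $<$ well-orders $\mathbb{N}$; the kindness of $S$ is secured by $\mbox{\euro}^5_\mathrm{M}$ together with the hypothesis that $c$ and its members are kinds.

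Second I would upgrade ``one member per member of $c$'' to ``one kind per kind of $c$ modulo extensionality'' using the disjointness hypothesis. Suppose $y_1, y_2 \in c$ are not coextensional. By hypothesis $y_1 \cap y_2$ is empty, so the $\bleq$-least element $x_1$ of $y_1$ does not belong to $y_2$, and symmetrically for $x_2$; in particular $x_1 \neq x_2$. Hence distinct coextensionality classes in $c$ contribute distinct elements to $S$. Conversely if $y_1, y_2 \in c$ are coextensional, they share all members, so the $\bleq$-least element is literally the same object; modulo extensionality this is a single representative. Combining the two directions, $S$ selects exactly one representative from each extensional equivalence class of members of $c$, which is the desired choice function. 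The final sentence of the proposition is then a restatement, since the whole construction is simply the instance of librationist capture with $\alpha(y,x) \triangleq x \in y$.

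The main obstacle is really only interpretive: one must parse ``modulo extensionality'' so that coextensional duplicates in $c$ are correctly identified in the selection, while non-coextensional members remain separated by the intersection hypothesis. No additional machinery beyond the Kind Choice Theorem, the enumeration prescripts, and the antisymmetry of $\bleq$ on the \euro-enumerated universe is needed, so there is no further technical obstacle.
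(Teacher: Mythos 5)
Your proposal is correct and follows the route the paper evidently intends: the paper states this proposition without any proof of its own, placing it right after the Kind Choice Theorem (whose proof is itself left as an exercise) and presenting it as the instance of librationist capture with $\alpha(y,x)$ taken as $x\in y$, so your reduction to the Kind Choice Theorem plus the disjointness/coextensionality bookkeeping for the ``modulo extensionality'' clause is exactly the intended argument. No substantive gap, and no divergence from the paper's (implicit) approach.
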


We will see in section \ref{definable echelon} that librationist capture in some contexts where extensionality fails is stronger than ordinary capture also in that the former provides collection and the latter not. Our considerations in this section are of a general nature, but the relationships pointed out between capture, replacement, specification and choice carry over to the more intricate constructions carried out in the next section and in section \ref{definable echelon}.

Our use of capture is not merely motivated by its pleasing aesthetical qualities such as that it is a biconditional or that we avoid special restrictions on the invoked first order condition, but also by the fact that it is a more manageable closure principle which is more useful for our definitional purposes below.

\section{Varieties of Conditions}

We explore some uniform maximal closure conditions for $\mathbb{H}$ beyond the Jensen rudimentary functions amounting to bounded separation discussed in \citep{Frode2012a}. 

Recall definition \ref{freevariables}. We make an analogous object language definition:

\begin{definition}

\begin{align*}
\Godelnum{\textcjheb{n}}(\Godelnum{v_{i}}) & \triangleq  \{\Godelnum{v_{i}}\} \\
\Godelnum{\textcjheb{n}}(\Godelnum{\mathrm{T}})&\triangleq \{\Godelnum{\mathrm{T}}\} \\
\Godelnum{\textcjheb{n}}(\Godelnum{\mbox{\euro}})&\triangleq \{\Godelnum{\mbox{\euro}}\}\\
\Godelnum{\textcjheb{n}}(\Godelnum{ba}) &\triangleq \Godelnum{\textcjheb{n}} (a)\cup \Godelnum{\textcjheb{n}} (b) \\
\Godelnum{\textcjheb{n}}(\hspace{1pt} \Godelnum{\hat{}\hspace{2pt}v_{\underline{i}}A}) &\triangleq \Godelnum{\textcjheb{n}}(A)\setminus\{{\Godelnum{v_{\underline{i}}}}\} \\
\Godelnum{\textcjheb{n}}(\Godelnum{\forall v_{\underline{i}}A}) &\triangleq \Godelnum{\textcjheb{n}}(A)\setminus\{{\Godelnum{v_{\underline{i}}}}\} \\
\Godelnum{\textcjheb{n}}(\Godelnum{\downarrow \hspace{-2pt}AB}) &\triangleq \Godelnum{\textcjheb{n}} (A)\cup \Godelnum{\textcjheb{n}} (B) \\
\Godelnum{\textcjheb{n}}(\Godelnum{\downarrow \hspace{-2pt}ab}) &\triangleq \Godelnum{\textcjheb{n}} (a)\cup \Godelnum{\textcjheb{n}} (b) \\
\end{align*}

\end{definition}

%\subsection{Varieties of Conditions}

We define \textit{conditions} upon noemata.

\begin{definition}\label{Cnd}$Cnd(x,y)\triangleq x$ is the code of a formula that does not contain \, $\hat{}$\hspace{1pt}, $\mathrm{T}$ or \euro \ and contains the joint only as connective and not as a juncture and $\forall z(z\in \Godelnum{\textcjheb{n}}x \rightarrow z\in y) $. 
\end{definition}

We will e.g. have $Cnd(\ulcorner\alpha\urcorner,\{\ulcorner v_{\underline{0}}\urcorner\})$ just if $\alpha$ is a first-order condition upon $v_{\underline{0}}$ precisely as in classical set theory without the identity sign as primitive.

\smallskip For $X$ a possibly more restricted set of (codes of) formulas we define:

\begin{definition}
$\textit{X-Cnd(x,y)}\triangleq Cnd(x,y)$ and $x\in X$. 
\end{definition}

%For $z$ a set of parameters define:

%\begin{definition}
%$\textit{Cnd(x,y,z)}\triangleq \exists w(Cnd(w,y)\wedge\exists n \in \mathbb{N} \exists \vec{a}(\vec{a} \in z^{n}\wedge x=w(\dot{\vec{a}}))).$
%\end{definition}

%\begin{definition}
%$\textit{X-Cnd(x,y,z)}\triangleq Cnd(x,y,z)\wedge x\in X.$
%\end{definition}

In the following we usually presuppose the full set of first order conditions. 

Recall our substitution function {\scriptsize SUB} of Section \ref{substitution}.

%\subsection{Varieties of Capture}

\begin{definition}
z \ is \ a \ Capture \ without \ parameters \ of \ a via \ x

\medskip

\noindent$Cpt(z,a,x) \triangleq Cnd(x,\{\ulcorner v_{\underline{0}}\urcorner,\ulcorner v_{\underline{1}}\urcorner\})\wedge z=\{u|\exists v(v \in a \wedge \mathrm{T}\mbox{{\scriptsize SUB}}(\mbox{{\scriptsize SUB}}(x,v),u)\wedge \\ \indent \forall w(\mathrm{T}\mbox{{\scriptsize SUB}}(\mbox{{\scriptsize SUB}}(x,v),w)\rightarrow u=w)  \}$
\end{definition}

%\begin{definition} z \ is \ an \ \textit{X-Capture} \ of \ a \ via \ x

%\medskip
%X-Cpt(z,a,x)$\triangleq Cpt(z,a,x)\wedge $X-Cnd(x,$\{v_{\underline{0}}, v_{\underline{1}}\} )$

%\end{definition}

\begin{definition} z \ is \ a \ Capture \ without \ parameters \ of \ $a$

\medskip
$Cpr(z,a)\triangleq \exists xCpt(z,a,x)$
\end{definition}

%\begin{definition}z \ is \ an \ \textit{X-Capture} \ of \ a 

%\medskip
%X-Cpr(z,a)$\triangleq \exists x$X-Cpt$(z,a,x)$
%\end{definition}

%\begin{lemma}\label{aussonderung}
%Aussonderung on kinds can be defined from Capture. 
%\end{lemma}
%\begin{proof}[Proof:]
%Let $a$ be kind and let $Cnd(x, \{v_{\underline{0}}\})$. Let $y=x \frown \ulcorner v_{\underline{0}}=v_{\underline{1}}\urcorner$. It follows that $Cnd(y, \{ %\ulcorner v_{\underline{0}}, v_{\underline{1}}\urcorner  \})$. With these assumptions we finish the proof of lemma \ref{aussonderung} by laying down the following:\\

%\begin{definition}
%$\{u:u \in a\wedge [\mathrm{T} \mbox{{\scriptsize SUB}}(y,u)] \}\triangleq\{u:\exists v(v \in a \wedge \mathrm{T}\mbox{{\scriptsize SUB}}(\mbox{{\scriptsize %SUB}}(y,v),u)\wedge \forall w(\mathrm{T}\mbox{{\scriptsize SUB}}(\mbox{{\scriptsize SUB}}(y,v),w)\rightarrow u=w)  \})$
%\end{definition} %\end{proof}

%\subsection{Varieties of Domination}
\section{Domination}

For some gain in effectivity, we state the domination requirements in terms of capture.

\begin{definition}[Domination requirement with parameter of parameters]~\\

$D(x,y,a,z) \triangleq\exists t(Cnd(t,z)\wedge \Godelnum{v_{\underline{0}}}\in\Godelnum{\textcjheb{n}}(t)\wedge \Godelnum{v_{\underline{1}}}\in \Godelnum{\textcjheb{n}}(t)\wedge\Godelnum{v_{\underline{2}}}\in\Godelnum{\textcjheb{n}}(t)\wedge\\ \indent  s=\mbox{\scriptsize SUB}(t\Godelnum{\frown}\Godelnum{\wedge}\Godelnum{\frown}\Godelnum{v_{\underline{1}}=v_{\underline{2}}},y)\wedge\\ \indent x=\{u|\exists v(v \in a \wedge \mathrm{T}\mbox{{\scriptsize SUB}}(\mbox{{\scriptsize SUB}}(s,v),u)\wedge\forall w(\mathrm{T}\mbox{{\scriptsize SUB}}(\mbox{{\scriptsize SUB}}(s,v),w)\rightarrow u=w)  \})$

%\indent\qquad\wedge x=\{u|u \in h\wedge \mathrm{T}\mbox{\scriptsize SUB}(s,u) \})$

\end{definition}

\begin{definition}[Domination requirement without parameters]~\\

$D^{-}(x,y,a) \triangleq\exists t(Cnd(t,\{\Godelnum{v_{\underline{0}}},\Godelnum{v_{\underline{1}}},\Godelnum{v_{\underline{2}}}\})\wedge \Godelnum{v_{\underline{0}}}\in\Godelnum{\textcjheb{n}}(t)\wedge\Godelnum{v_{\underline{1}}}\in\Godelnum{\textcjheb{n}}(t) \\ \indent\wedge\Godelnum{v_{\underline{2}}}\in\Godelnum{\textcjheb{n}}(t)\wedge s=\mbox{\scriptsize SUB}(t\Godelnum{\frown}\Godelnum{\wedge}\Godelnum{\frown}\Godelnum{v_{\underline{1}}=v_{\underline{2}}},y)\wedge\\ \indent x=\{u|\exists v(v \in a \wedge \mathrm{T}\mbox{{\scriptsize SUB}}(\mbox{{\scriptsize SUB}}(s,v),u)\wedge\forall w(\mathrm{T}\mbox{{\scriptsize SUB}}(\mbox{{\scriptsize SUB}}(s,v),w)\rightarrow u=w)  \})$
  
\end{definition}
%\begin{definition}[Domination requirement with one explicated parameter]~\\

%$D(x,y,z,h) \triangleq\exists s , t(Cnd(t,z)\wedge \Godelnum{v_{\underline{0}}}\in z\wedge \Godelnum{v_{\underline{1}}}\in z\wedge s=\mbox{\scriptsize SUB}(t,y)\\ %\indent\qquad\wedge x=\{u|u \in h\wedge \mathrm{T}\mbox{\scriptsize SUB}(s,u) \})$

%\end{definition}

\noindent Domination withouth parameters can now be defined by manifestation point with one parameter as described in Section \ref{manifestation point} above:

\begin{definition}
$\cVvdash\forall x(x \in \mathcal{D}^{-}(h) \leftrightarrow \mathbf{TT}D^{-}(x,\mathcal{D}^{-}(h),h))$
\end{definition}

\begin{lemma}
$\cVvdash\forall x(KIND(\mathcal{D}^{-}(x)))$
\end{lemma}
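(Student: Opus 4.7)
The argument follows the standard librationist template for kindness: derive a maxim of the form $\mathbb{T}(\mathbb{T}C\rightarrow C)$ for the relevant membership formula $C$, and then invoke $\pounds^5_\mathrm{M}$ together with modus maximus $R_1$ to extract the kindness disjunction $\mathbb{T}C\vee \mathbb{T}\lnot C$. Fix noemata $h$ and $y$, and write $A$ for $y\in \mathcal{D}^-(h)$. The target is $\cVvdash \mathbb{T}A\vee \mathbb{T}\lnot A$, from which $\cVvdash KIND(\mathcal{D}^-(h))$ follows by the $Z$-regulation $R_Z$, and a second application of $R_Z$ delivers $\cVvdash \forall x\, KIND(\mathcal{D}^-(x))$.

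From the manifestation-point clause we have $\cVvdash A \leftrightarrow \mathbf{TT}D^-(y,\mathcal{D}^-(h),h)$. Applying $R_4$ and $\pounds^1_\mathrm{M}$ distributes $\mathbb{T}$ through the biconditional to yield $\cVvdash \mathbb{T}A\leftrightarrow \mathbb{T}\mathbf{TT}D^-$. Since $\mathbf{TT}D^- = \mathbb{T}\mathbb{T}D^-$, chaining these two biconditionals with a maxim of the form $\cVvdash \mathbb{T}\mathbb{T}D^-\rightarrow \mathbb{T}\mathbb{T}\mathbb{T}D^-$ produces $\cVvdash A\rightarrow \mathbb{T}A$. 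To secure this upgrade (rather than the merely minor version supplied by $\pounds^9_m$ applied to $\mathbb{T}D^-$) I invoke $\pounds^4_\mathrm{M}$ with an intrinsically unstable witness $B$: for a Liar sentence $B$ obtained via Theorem \ref{CarnapGodel}, the oscillatory behaviour of $B$ at the closure ordinal $\koppa$ described in Section \ref{Negjunction} supplies $\cVvdash \lnot\mathbb{T}B\wedge \lnot\mathbb{T}\lnot B$, so the first two disjuncts of the schema $\mathbb{T}B\vee \mathbb{T}\lnot B\vee (\mathbb{T}C\rightarrow \mathbb{T}\mathbb{T}C)$ are eliminated at the maximal level, making the third disjunct a maxim for every $C$, and in particular for $C = \mathbb{T}D^-$.

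With $\cVvdash A\rightarrow \mathbb{T}A$ in hand, chain with $\pounds^2_\mathrm{M}$ (in the form $\mathbb{T}A\rightarrow \lnot\mathbb{T}\lnot A$) to obtain $\cVvdash A\rightarrow \lnot\mathbb{T}\lnot A$, and contrapose to get $\cVvdash \mathbb{T}\lnot A\rightarrow \lnot A$ as a maxim. One further application of $R_4$ produces $\cVvdash \mathbb{T}(\mathbb{T}\lnot A\rightarrow \lnot A)$, and then $\pounds^5_\mathrm{M}$ instantiated with $\lnot A$ in place of $A$, together with $R_1$, delivers $\cVvdash \mathbb{T}\lnot A\vee \mathbb{T}\lnot\lnot A$; classical double negation (a maxim, propagated through $\mathbb{T}$ by $R_4$ and $\pounds^1_\mathrm{M}$) rewrites the second disjunct as $\mathbb{T}A$. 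Two successive applications of $R_Z$ (first in $y$, then in $h$) yield the lemma. The main obstacle is the maximal upgrade of $\mathbb{T}\mathbb{T}D^-\rightarrow \mathbb{T}\mathbb{T}\mathbb{T}D^-$, which rests on having a Liar-style sentence whose intrinsic semantic instability at $\koppa$ produces both $\cVvdash \lnot\mathbb{T}B$ and $\cVvdash \lnot\mathbb{T}\lnot B$; once this is in place the remaining reasoning is routine modal manipulation of the manifestation-point biconditional via $\pounds^1_\mathrm{M}$, $\pounds^2_\mathrm{M}$, $R_4$, and classical sentential logic.
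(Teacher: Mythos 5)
Your outer template (derive $\cVvdash y\in\mathcal{D}^{-}(h)\rightarrow\mathbb{T}\,y\in\mathcal{D}^{-}(h)$ as a maxim, then use $\pounds^{2}_{\mathrm{M}}$, $R_{4}$, $\pounds^{5}_{\mathrm{M}}$ and $R_{Z}$) is a reasonable librationist pattern, but the step everything hangs on --- the maximal upgrade $\cVvdash\mathbb{T}\mathbb{T}D^{-}\rightarrow\mathbb{T}\mathbb{T}\mathbb{T}D^{-}$ --- is not secured, and your route to it is unsound. For a Liar-type $B$ it is true that $\fat \Koppa\vDash\lnot\mathbb{T}B\wedge\lnot\mathbb{T}\lnot B$, but that is not the same as $\cVvdash\lnot\mathbb{T}B$ and $\cVvdash\lnot\mathbb{T}\lnot B$: maximality requires \emph{stable} failure of $\mathbb{T}B$ (respectively $\mathbb{T}\lnot B$) below $\fat \Koppa$, and by principle \texttt{P}$1$ one of $\mathbb{T}B$, $\mathbb{T}\lnot B$ holds at \emph{every} successor ordinal $\gamma+1$ (according as $B$ held or failed at $\gamma$), so for an oscillating $B$ both recur cofinally and their negjunctions are at best minors ($\cVdash$), never maxims. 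Hence you cannot discharge the guard disjuncts of $\pounds^{4}_{\mathrm{M}}$ ``at the maximal level''; and since the regulations are status-sensitive, minor negations of the guards would not leave the remaining disjunct as a maxim either. Worse, the conclusion you aim at --- that $\mathbb{T}C\rightarrow\mathbb{T}\mathbb{T}C$ is a maxim for \emph{every} $C$ --- is false: for unstable $C$ that subjunction fails at exactly the successor stages at which $C$ has just turned true, which is precisely why $\pounds^{4}_{\mathrm{M}}$ carries the guard $\mathbb{T}B\vee\mathbb{T}\lnot B$ (true at every successor stage) in the first place. Were your reading correct, every sort would come out kind and the maxim/minor distinction would collapse.

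A symptom of the trouble is that your argument never uses what $D^{-}$ actually says: if it worked, it would show that every manifestation point is kind, which is false, since manifestation points over unstable conditions are exactly the paradoxical, non-kind cases. The paper's proof, terse as it is, turns on the content of $D^{-}$: modulo the outer $\mathbf{TT}$, membership in $\mathcal{D}^{-}(h)$ is a matter of the coding and substitution apparatus ($Cnd$, the G\"{o}del numerals, {\scriptsize SUB}) together with identities, i.e.\ of arithmetic and the theory of identity; since \pounds\ soberly extends Peano arithmetic and the Leibnizian-Russellian theory of identity, these ingredients are stable, whence $\mathbb{T}\,y\in\mathcal{D}^{-}(h)\vee\mathbb{T}\,y\notin\mathcal{D}^{-}(h)$ for all $y$, and $R_{Z}$ (in $y$, then in the parameter) gives the lemma. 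So the missing idea is the stability of the defining condition itself; once that is in place your modal bookkeeping becomes dispensable, and without it the argument cannot be repaired along the lines you propose.
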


\begin{proof}[Proof:]
As \pounds \ soberly extends the theory of identity and Peano arithmetic.
\end{proof}

\begin{corollary}
$\cVvdash\forall x(x \in \mathcal{D}^{-}(h) \leftrightarrow D^{-}(x,\mathcal{D}^{-}(h),h))$
\end{corollary}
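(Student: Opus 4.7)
The plan is to exploit two facts: the preceding lemma tells us $\mathcal{D}^-(h)$ is KIND, and the manifestation-point clause defining $\mathcal{D}^-(h)$ gives the biconditional with $\mathbf{TT}D^{-}$ on the right. The corollary amounts to collapsing the double alethizer, which is not immediate from $\pounds^8_m$/$\pounds^9_m$ alone (these are only minor prescriptions), but it does follow at the level of maxims once we know each atomic membership statement $c \in \mathcal{D}^-(h)$ is unparadoxical.

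First I would invoke the $Z$-regulation and reduce the goal to showing, for an arbitrary cognomen $c$, the maxim $\cVvdash c \in \mathcal{D}^{-}(h) \leftrightarrow D^{-}(c,\mathcal{D}^{-}(h),h)$. From the lemma, specialized to $h$ and to the term $c$, we have $\cVvdash \mathbb{T}c\in\mathcal{D}^{-}(h) \vee \mathbb{T}c\notin\mathcal{D}^{-}(h)$; by $\pounds^2_\mathrm{M}$ the two disjuncts are incompatible as maxims, so exactly one of $\mathbb{T}c\in\mathcal{D}^{-}(h)$ and $\mathbb{T}c\notin\mathcal{D}^{-}(h)$ is a maxim. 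Applying modus descendens maximus $R_6$, exactly one of $c\in\mathcal{D}^{-}(h)$ and $c\notin\mathcal{D}^{-}(h)$ is a maxim; I proceed by cases.

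In the positive case $\cVvdash c\in\mathcal{D}^{-}(h)$, the manifestation-point defining biconditional (which is a maxim) together with classical logic gives $\cVvdash \mathbf{TT}D^{-}(c,\mathcal{D}^{-}(h),h)$, and two applications of $R_6$ yield $\cVvdash D^{-}(c,\mathcal{D}^{-}(h),h)$; so both sides of the target biconditional are maxims and the biconditional is a maxim by tautological reasoning from $\pounds^i_\mathrm{M}$--$\pounds^{iii}_\mathrm{M}$ and $R_1$. In the negative case $\cVvdash c\notin\mathcal{D}^{-}(h)$, the manifestation biconditional instead gives $\cVvdash \lnot\mathbf{TT}D^{-}(c,\mathcal{D}^{-}(h),h)$, i.e., by the equivalence $\cVvdash \lnot A \Leftrightarrow \,\cancel{\cvdash}\, A$, the sentence $\mathbf{TT}D^{-}(c,\mathcal{D}^{-}(h),h)$ is not a thesis. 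Hence $D^{-}(c,\mathcal{D}^{-}(h),h)$ cannot be a thesis either: if it were, iterated modus ascendens ($R_4$ when its thesishood is maximal, $R_5$ when it is minor) would propagate thesishood through two applications of $\mathbb{T}$ to yield $\cvdash \mathbf{TT}D^{-}(c,\mathcal{D}^{-}(h),h)$, a contradiction. By negjunction completeness of \pounds, $\lnot D^{-}(c,\mathcal{D}^{-}(h),h)$ is then a thesis, and since its negjunction fails to be one, it is a maxim. Both sides of the target biconditional are now anti-maxims, and the biconditional is again a maxim by classical reasoning. Finally, the $Z$-regulation yields the universal $\cVvdash\forall x(x \in \mathcal{D}^{-}(h) \leftrightarrow D^{-}(x,\mathcal{D}^{-}(h),h))$.

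The main obstacle is the backward step in the negative case: one would like simply to contrapose $\pounds^9_m$ twice, but since $\pounds^9_m$ is only a minor prescription this does not transmit anti-maximality directly. Routing the argument through \pounds\textit{'s} negjunction completeness, combined with the observation that modus ascendens preserves mere thesishood (not just maximality), is what makes the collapse of $\mathbf{TT}$ to nothing legitimate at the level of maxims.
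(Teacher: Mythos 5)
Your proof is in substance the argument the paper intends: the corollary is left as the immediate pay-off of the preceding lemma, and your case split driven by $KIND(\mathcal{D}^{-}(h))$, with $R_{6}$ collapsing $\mathbf{TT}$ in the positive case and the detour through thesishood in the negative case, is exactly how that pay-off gets cashed out. Your handling of the negative case is the right one: you correctly observe that $\pounds^{9}_{m}$ cannot simply be contraposed at the level of maxims, and instead use that \emph{modus ascendens} preserves mere thesishood ($R_{4}$/$R_{5}$) together with the paper's equivalence $\cVvdash A$ iff $\cancel{\cvdash}\lnot A$ and negjunction completeness.

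Two repairs are needed, the first of which concerns a justification that would fail as stated. You infer, from the maximality of the veljunction $\mathbb{T}\,c\in\mathcal{D}^{-}(h)\vee\mathbb{T}\,c\notin\mathcal{D}^{-}(h)$ and the incompatibility of its disjuncts (via $\pounds^{2}_{\mathrm{M}}$), that exactly one disjunct is a maxim. As a general pattern this is false in \pounds: $r\in r\vee r\notin r$ is a maxim whose disjuncts are incompatible, yet neither disjunct is a maxim. Incompatibility only secures the ``at most one'' half. The ``at least one'' half holds here because the disjuncts are $\mathbb{T}$-prefixed: from $\cVvdash(\mathbb{T}C\vee\mathbb{T}\lnot C)$ one has $\fat \Koppa\vDash\mathbb{T}C\vee\mathbb{T}\lnot C$, the veljunction splits at the single ordinal $\fat \Koppa$, and $\fat \Koppa\vDash\mathbb{T}C$ is by definition $\cVvdash C$ (equivalently: were $c\in\mathcal{D}^{-}(h)$ a minor, neither it nor its negjunction would hold on a final segment below $\fat \Koppa$, so the veljunction would already fail at $\fat \Koppa$). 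Second, the $Z$-regulation takes as premise the instances for all \emph{noemata}, not for all cognomina; since nothing in your case analysis uses that $c$ is a cognomen, simply run it for an arbitrary noema and the application of $R_{Z}$ is then licit.
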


\begin{definition}[Ordinary specification, short hand]~\\

$\overset{o}{\langlebar} x\in a|\alpha(x)\overset{o}{\ranglebar}\triangleq\{x|\exists y(y\in a\wedge\mathrm{T}\Godelnum{\alpha(\dot{y})\wedge \dot{y}=\dot{x}}\wedge\forall z(\mathrm{T}\Godelnum{\alpha(\dot{y})\wedge \dot{y}=\dot{z}}\rightarrow z=x)\}$
\end{definition}

\begin{lemma}[The Impredicativity Lemma]~\\ \label{impredicativity}

\noindent If $z=\{u|u \in \mathcal{D}^{-}(h) \wedge \alpha(u) \}$, $Cnd(\Godelnum{\alpha},\{\Godelnum{u}\})$ and kind $h$ then a kind $w$ coextensional with $\bigcup z$ is a member of $\mathcal{D}^{-}(h)$.
\end{lemma}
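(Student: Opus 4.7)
The plan is to exhibit a capture $w$ that manifestly lies in $\mathcal{D}^{-}(h)$ and is coextensional with $\bigcup z$, exploiting the fact that $\mathcal{D}^{-}(h)$ is defined with itself as a parameter. I would first unwind $D^{-}$ on a generic witness $t$: since the outer SUB replaces the least-indexed noema $v_0$ with the parameter $y$, and the two inner SUBs successively hit $v_1$ and then $v_2$, the conjunct $v_1=v_2$ forces $v=u$ in the existential and automatically discharges the uniqueness clause. Consequently every member of $\mathcal{D}^{-}(h)$ is coextensional with a set of the form $\{u\mid u\in h\wedge\mathrm{T}\,t(\mathcal{D}^{-}(h),u,u)\}$, and in particular is (coextensional with) a subset of $h$ -- this is the observation that enables coextensionality with $\bigcup z$.

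Second, I would supply the witnessing condition
\[
t(v_0,v_1,v_2)\ :=\ \exists u'\bigl(u'\in v_0\wedge\alpha(u'/u)\wedge v_1\in u'\bigr)\ \wedge\ v_2=v_2,
\]
with $u'$ a fresh bound noema and the idle conjunct $v_2=v_2$ ensuring $\Godelnum{v_2}\in\Godelnum{\textcjheb{n}}(t)$. Because $\alpha$ satisfies $Cnd(\Godelnum{\alpha},\{\Godelnum{u}\})$ by hypothesis, and identity unfolds via $\forall$ and the joint alone, $Cnd(t,\{\Godelnum{v_0},\Godelnum{v_1},\Godelnum{v_2}\})$ is immediate. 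Running SUB with $y=\mathcal{D}^{-}(h)$ the resulting capture reduces to
\[
w\ =\ \{u\mid u\in h\wedge\mathrm{T}[\exists u'(u'\in\mathcal{D}^{-}(h)\wedge\alpha(u')\wedge u\in u')]\}.
\]
Since each $u'\in\mathcal{D}^{-}(h)$ is a subset of $h$ by the first step, $u\in\bigcup z\leftrightarrow u\in h\wedge\exists u'(u'\in\mathcal{D}^{-}(h)\wedge\alpha(u')\wedge u\in u')$; the outer $\mathrm{T}$ is absorbed using $\pounds^{8}_m$ and $\pounds^{9}_m$ together with kindness of $h$, of $\mathcal{D}^{-}(h)$ and of $\alpha$ on kinds, yielding $\cVvdash\forall u(u\in w\leftrightarrow u\in\bigcup z)$. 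Kindness of $w$ is then forced because $w$ is specified from a kind by a kind-valued condition. Finally, $\cVvdash D^{-}(w,\mathcal{D}^{-}(h),h)$ is read off directly from the choice of $t$; two applications of $R_4$ give $\cVvdash\mathbf{TT}D^{-}(w,\mathcal{D}^{-}(h),h)$, and the defining maxim of $\mathcal{D}^{-}(h)$ then yields $w\in\mathcal{D}^{-}(h)$.

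The main obstacle is the coding bookkeeping: one must verify that $t\Godelnum{\frown}\Godelnum{\wedge}\Godelnum{\frown}\Godelnum{v_1=v_2}$ genuinely has $v_0$ as its smallest-indexed noema (so that the outer SUB targets the parameter slot) and that the successive inner SUBs then land on $v_1$ and $v_2$ in the prescribed order. This is where the pedantic numerical policy of Section \ref{Formations as numbers} has to be leaned on. Once the substitution arithmetic is pinned down, the impredicativity -- the occurrence of $\mathcal{D}^{-}(h)$ inside the condition defining a new member of $\mathcal{D}^{-}(h)$ -- is witnessed directly by $t$ and demands no further work.
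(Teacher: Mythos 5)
Your proposal is correct and follows essentially the same route as the paper's (very terse) proof: the paper simply notes that $\overset{o}{\langlebar}x\in h\,|\,\exists u(u\in\mathcal{D}^{-}(h)\wedge\alpha(u)\wedge x\in u)\overset{o}{\ranglebar}$ lies in $\mathcal{D}^{-}(h)$ by the impredicativity of the manifestation point and is, in the desired kind contexts, coextensional with $\bigcup z$. Your explicit witness $t$ with the idle conjunct, the SUB bookkeeping, the truth-absorption via $\pounds^{8}_m$/$\pounds^{9}_m$, and the appeal to $R_4$ and the defining maxim are just the unwound version of that same argument.
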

\begin{proof}[Proof:]
$\mathcal{D}^{-}(h)$ is impredicative as $ \overset{o}{\langlebar}x\in h|\exists u(u\in\mathcal{D}^{-}(h)\wedge\alpha(u)\wedge x\in u)\overset{o}{\ranglebar}\in\mathcal{D}^{-}(h)$. In desired kind contexts the latter is coextensional with $\bigcup\{u|u \in \mathcal{D}^{-}(h) \wedge \alpha (u)\}$.
\end{proof}

%\section{Real Closure}

\section{The Skolem Cannon}

As in \cite{Frode2012a} we take a sort to be good iff it is hereditarily kind and in $\mathbb{H}$.

\begin{cannon}[The Skolem Cannon]\label{Cannon}~\\

If $c$ is good and $Cpr(z,c)$ (with possible parameters from $\mathbb{H}$), then so is z.\\ \indent More precisely, $\cVvdash c\in\mathbb{H}\land Cpr(z,c)\to z\in\mathbb{H}$.
\end{cannon}

The indebtedness to Skolem's idea of replacing Zermelo's imprecise notion of \textit{definite Aussage} with the notion now known as \textit{first order condition} is manifest. Nevertheless, for stylistic purposes we use  \textit{the Cannon} as a metonym  for the title of this section. Notice that the Cannon is not an original posit, and it amounts to a leap of faith that it is a theorem, as it were, or a presupposition on a par with the credo in or lore of the ZF tradition that replacement or collection with first order conditions does not lead to inconsistency. Given G\"{o}del's second incompleteness theorem, we cannot expect to prove that the Cannon holds in the librationist semantic set up unless with means that surpasses the resources used in that set up. We state the two critical adjoint theorems conditional upon the Cannon: 

\begin{theorem}
$\cVvdash(\forall x)(x \in \mathbb{H}\rightarrow\mathcal{D}^{-}(x) \in \mathbb{H})$
\end{theorem}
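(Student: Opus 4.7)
The plan is to establish that $\mathcal{D}^{-}(h) \in \mathbb{H}$ whenever $h \in \mathbb{H}$ by separately verifying the three defining features of membership in $\mathbb{H}$: that $\mathcal{D}^{-}(h)$ is a kind, that it is hereditarily a member of $\mathbb{H}$, and that it is well-founded in the sense secured by the Regularity Rule (Theorem \ref{Regularityrule}). The first of these is immediate from the lemma stating $\cVvdash \forall x(KIND(\mathcal{D}^{-}(x)))$, so only the hereditary clause and well-foundedness need argument.

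For the hereditary clause, the strategy is to apply the Skolem Cannon (Cannon \ref{Cannon}) to each $x\in\mathcal{D}^{-}(h)$. By the corollary to the manifestation-point definition, such an $x$ is presented as a capture of $h$ via a first-order condition $s$, where $s$ is obtained from some $t$ with $\Godelnum{v_{\underline{0}}},\Godelnum{v_{\underline{1}}},\Godelnum{v_{\underline{2}}}\in \Godelnum{\textcjheb{n}}(t)$ by substituting $\mathcal{D}^{-}(h)$ itself for $v_{\underline{1}}$ and conjoining the functional clause $v_{\underline{1}}=v_{\underline{2}}$. Since the Cannon delivers goodness of a capture of a good set when any additional parameters come from $\mathbb{H}$, we first use the hypothesis $h\in\mathbb{H}$ and then need that the parameter $\mathcal{D}^{-}(h)$ appearing in $s$ is itself in $\mathbb{H}$; this is precisely the circularity we must dissolve.

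To dissolve it, the plan is to proceed by transfinite induction along the Herzberger semantic ordinals: at each stage $\fat\alpha\fat\prec\fat\Koppa$, show that every element $x$ such that $\fat\alpha\vDash x\in\mathcal{D}^{-}(h)$ is already in $\mathbb{H}$, using that any parameter instance of $\mathcal{D}^{-}(h)$ in the defining condition $s$ only needs to behave, with respect to the capture's functional output, like a good subset of $\mathcal{D}^{-}(h)$ that has stabilized by an earlier stage. The Impredicativity Lemma (Lemma \ref{impredicativity}) supplies the tool here: within $\mathcal{D}^{-}(h)$ we can replace any reference to $\mathcal{D}^{-}(h)$-as-a-whole by a first-order specified subset whose union computes the same witness, and this specified subset is good by inductive hypothesis. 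With that substitution in place, the condition $s$ has parameters only from $\mathbb{H}$, so the Cannon applies and yields $x\in\mathbb{H}$. Aggregating over stages gives $\mathcal{D}^{-}(h)\subset\mathbb{H}$.

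Well-foundedness is then a comparatively routine corollary: $\mathcal{D}^{-}(h)$ is kind, its members lie in $\mathbb{H}$ which is itself hereditarily well-founded by the Regularity Rule, and captures via functional conditions cannot create a membership cycle that would violate regularity. Combining kindness, hereditariness, and well-foundedness yields $\cVvdash \mathcal{D}^{-}(h)\in\mathbb{H}$, and generalising with the Z-regulation delivers the universal statement. The main obstacle I anticipate is making the impredicativity-dissolving step in the second paragraph fully precise, since it requires the semantics of $\mathcal{D}^{-}(h)$ to interact cleanly with the layered goodness witnessed by the inductive hypothesis; one must check that the substitution licensed by the Impredicativity Lemma really does preserve the identity of the captured image $x$, and not merely its extension, under the non-extensional identity of $\pounds$.
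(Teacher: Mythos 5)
There is a genuine gap, and it sits exactly where you yourself flag an ``anticipated obstacle''. The circularity you correctly identify --- each member of $\mathcal{D}^{-}(h)$ is presented as a capture of $h$ via a condition into which $\mathcal{D}^{-}(h)$ itself has been substituted as a parameter, so the Cannon's proviso that parameters come from $\mathbb{H}$ presupposes the very conclusion --- is not dissolved by your second paragraph. The transfinite induction along the Herzberger stages is only gestured at, and the Impredicativity Lemma (Lemma \ref{impredicativity}) cannot carry the load you place on it: it produces, from a first-order specified subset of $\mathcal{D}^{-}(h)$, a kind \emph{coextensional} with its union that again lies in $\mathcal{D}^{-}(h)$; it licenses no replacement of the parameter $\mathcal{D}^{-}(h)$ inside a capture condition by a stage-wise approximant with preservation of the captured object's identity, and in \pounds \ coextensionality neither yields identity (cf.\ Theorem \ref{surprise on identity}) nor obviously preserves goodness. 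Since that step is the whole content of the argument, the proof does not go through as proposed. A smaller point: membership in $\mathbb{H}$ is defined by $x\in\mathbb{H}\leftrightarrow KIND(x)\wedge x\subset\mathbb{H}$, so well-foundedness is not a third clause to be verified separately but a consequence secured afterwards by the Regularity Rule (Theorem \ref{Regularityrule}).

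You should also know that the paper does not prove this statement at all: it is expressly one of the ``two critical adjoint theorems conditional upon the Cannon'', and the surrounding discussion explains, via G\"{o}del's second incompleteness theorem, why no outright proof of such closure principles is to be expected within the resources of the librationist set-up; the analogous facts for $\mathcal{D}(x,\dot{D})$ and $\dot{D}\in\mathbb{H}$ in the definable echelon section are likewise left as exercises marked SFP. So an honest write-up here either takes the Cannon (read so as to cover the manifestation-point parameter) as the assumption from which the theorem is immediate, or openly adds a further postulate of the same character; a semantic argument that genuinely eliminated the circularity would in effect establish the Cannon itself, which is precisely what the paper says one should not expect to do.
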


\begin{theorem}
$\cVvdash(\forall x)(\forall y)(x \in \mathbb{H} \wedge Cpr(y,x)\rightarrow y \in \mathbb{H})$
\end{theorem}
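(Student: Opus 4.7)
The plan is to get the statement as an immediate consequence of the Skolem Cannon (Cannon~\ref{Cannon}) by applying the Z-regulation $R_Z$ to its two free noemata. Cannon~\ref{Cannon} already supplies the open form $\cVvdash c \in \mathbb{H} \wedge Cpr(z,c) \to z \in \mathbb{H}$, and the theorem is literally the universal closure of this in $c$ and $z$; once the Cannon is granted there is essentially nothing more to do. The remark in the preceding paragraph that ``we state the two critical adjoint theorems conditional upon the Cannon'' is in effect a promise that the proof will amount to exactly this universalising move.

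In slightly more detail, I would first note that the Cannon holds uniformly in $c$ and $z$: it is posited as a closure property of $\mathbb{H}$ under any first-order capture, and the existential quantifier in $Cpr(z,c) \triangleq \exists x\, Cpt(z,c,x)$ internalises the choice of the witnessing condition, so nothing in its statement depends on the particular names picked for $c$ and $z$. Hence for every pair of noemata $u,v$ the instance $\cVvdash u \in \mathbb{H} \wedge Cpr(v,u) \to v \in \mathbb{H}$ is a maxim. A first application of $R_Z$ to $v$ gives $\cVvdash \forall y(u \in \mathbb{H} \wedge Cpr(y,u) \to y \in \mathbb{H})$; a second application of $R_Z$ to $u$ then yields $\cVvdash \forall x \forall y(x \in \mathbb{H} \wedge Cpr(y,x) \to y \in \mathbb{H})$, which is the theorem modulo renaming of bound variables.

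The main obstacle is not this final derivation but the Cannon on which it is predicated: as is emphasised in the section stating Cannon~\ref{Cannon}, by G\"odel's second incompleteness theorem the Cannon itself cannot be established by the means used to isolate thesishood in \pounds, and must be taken as a leap of faith analogous to the lore that replacement or collection over first-order conditions does not lead to inconsistency in the $ZF$ tradition. A subsidiary point one should check along the way is that the substitution of noemata for $c$ really does give maxims (and not merely theses) uniformly; but since the antecedent $u \in \mathbb{H}$ itself forces the denotation of $u$ to be hereditarily kind and well-founded, the recipe by which the Cannon is justified applies instance by instance, so this subsidiary point does not demand separate work.
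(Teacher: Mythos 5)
Your proposal is correct and matches the paper's intent: the paper states this theorem without a separate proof, presenting it as an immediate adjoint of Cannon~\ref{Cannon}, of which it is just the universal closure, exactly as you derive it by instantiating the Cannon at arbitrary noemata and applying $R_Z$ twice. Your subsidiary remark about uniformity of the Cannon over noemata instances is the only point the paper leaves implicit, and your handling of it is consistent with the paper's treatment of the Cannon as a schematic leap of faith.
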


%\noindent In stating the appropriate closure schemas one needs to pay attention to levels on account of the fact that $\mathbb{H}$ itself is not kind.

%\begin{theorem}
% $\cVvdash \forall a(\{y|y \in a\} \in \mathbb{H}\wedge Cpr(z,a) \rightarrow z \in \mathbb{H})$.
%\end{theorem}

%\begin{theorem}
%$\cVvdash \forall a(\{w|w \in \{\{z|z \in a\}\}\} \in \mathbb{H} \rightarrow  \mathcal{D}^{-}(a) \in \mathbb{H} )$
%\end{theorem}

\section{Definable Real Numbers}

\medskip

Let $\mathbb{Q}$ be the set of rational numbers and $<$ their usual order. We use a formula with precisely the noemata $x$ and $y$ to appropriately capture Dedekind cuts when defining the set $\mathbb{D}$ of definable real numbers:

\begin{definition}[The Definable Real Numbers]~\\

$\mathbb{D} \triangleq\overset{o}{\langlebar} x \in \mathcal{D}^{-}(\mathbb{Q})|\exists u(u \in x)\wedge\exists u(u \in \mathbb{Q}\wedge u \notin x)\wedge \forall u(u\in x \rightarrow \\ \indent\forall v(v\in \mathbb{Q} \wedge v<u \rightarrow v\in x)) \wedge \forall u(u\in x \rightarrow \exists v(v\in \mathbb{Q} \wedge u<v \wedge v\in x))\overset{o}{\ranglebar}$

\end{definition}

\begin{theorem}
If $z$ is a definable set of real numbers from $\mathbb{D}$ with an upper bound in $\mathbb{D}$ then $z$ has a least upper bound in $\mathbb{D}$.
\end{theorem}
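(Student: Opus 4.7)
The plan is to take $s=\bigcup z$, the union of all Dedekind cuts lying in $z$, as the candidate supremum, and then to verify that $s$ belongs to $\mathbb{D}$ and is the least upper bound. We may assume $z$ is non-empty, since otherwise the claim would need a minimum element of $\mathbb{D}$ and is not what is intended. Write $z$ as $\{x \mid x\in\mathcal{D}^{-}(\mathbb{Q})\wedge \alpha(x)\}$ where $\alpha$ is the first-order condition witnessing the definability of $z$ (incorporating the cut clauses of $\mathbb{D}$ together with whatever additional defining predicate singles out $z$).

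The first key step is to apply the Impredicativity Lemma (Lemma~\ref{impredicativity}) with $h=\mathbb{Q}$ and defining condition $\alpha$, obtaining a kind $w$ that is coextensional with $\bigcup\{u\mid u\in\mathcal{D}^{-}(\mathbb{Q})\wedge\alpha(u)\}=\bigcup z$ and is itself a member of $\mathcal{D}^{-}(\mathbb{Q})$. I take $s$ to be this $w$; in view of non-extensionality I should pick a representative that is hereditarily kind where needed, but any coextensional choice suffices for the real-number content. Next I verify the four Dedekind-cut clauses in the definition of $\mathbb{D}$ directly for $s$: $s$ is non-empty because at least one $x\in z$ is; $s$ is bounded in $\mathbb{Q}$ because the given upper bound $b\in\mathbb{D}$ satisfies $x\subset b$ for every $x\in z$, so $s\subset b$ and $s\neq\mathbb{Q}$; $s$ is downward closed in $\mathbb{Q}$ because each cut $x\in z$ is; and $s$ has no greatest element because every $u\in s$ lies in some cut $x\in z$, which by hypothesis contains some $v\in\mathbb{Q}$ with $u<v$, and this $v$ is in $s$. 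Therefore $s\in\mathbb{D}$ via the ordinary-specification shorthand that defines $\mathbb{D}$.

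Finally, $s$ is an upper bound of $z$, since $x\subset s$ for every $x\in z$, and it is the least such: if $t\in\mathbb{D}$ satisfies $x\subset t$ for all $x\in z$, then $s=\bigcup z\subset t$, so $s\leq t$ in the cut ordering. The main obstacle is the first step; classically $\bigcup z$ is a set by pure union, but in the librationist setting the substantive point is that the union of an impredicatively indexed family of members of $\mathcal{D}^{-}(\mathbb{Q})$ remains in $\mathcal{D}^{-}(\mathbb{Q})$. This is exactly what the Impredicativity Lemma was designed for, and it is what lets the supremum escape the familiar predicative pitfalls that prevent a naive definable-reals construction from satisfying the least upper bound property. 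The ancillary issue of ensuring the captured representative is hereditarily kind enough to inhabit $\mathbb{D}$ is handled because the cut clauses are first order and the ordinary-specification shorthand used in the definition of $\mathbb{D}$ is itself a capture operation, whose kindhood flows from the general properties of $\mathcal{D}^{-}$ established above.
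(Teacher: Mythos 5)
Your proposal is correct and follows exactly the route the paper intends: the paper's own ``proof'' consists solely of the hint to invoke the Impredicativity Lemma (Lemma~\ref{impredicativity}), which you apply with $h=\mathbb{Q}$ to get a kind coextensional with $\bigcup z$ inside $\mathcal{D}^{-}(\mathbb{Q})$, and the remaining verification of the cut clauses and the least-upper-bound property is the standard Dedekind argument you supply. Nothing in your write-up diverges from or falls short of what the paper leaves to the reader.
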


\begin{proof}[Proof:]
The hint is to invoke the Impredicativity Lemma \ref{impredicativity}.
\end{proof}

\medskip
%$ x\in a\alpha\rwavy$

\section{The Definable Echelon}\label{definable echelon}

\begin{definition}[Conditions restricted to $u$]\label{Cnd}~\\

$Cn(x,u)\triangleq x$ is the code of a formula that does not contain \, $\hat{}$\hspace{2pt}, $\mathrm{T}$ or \euro \ and contains \indent the joint only as connective and not as a juncture and $\forall z(z\in \Godelnum{\textcjheb{n}}x \rightarrow z\in u) $ and all \indent the quantifiers of the formula x is a code of are restricted to $u$. 
\end{definition}

%\begin{lemma}
%$\forall x(x \in \mathcal{D}(h,z) \leftrightarrow D(x,\mathcal{D}(h,z),z,h))$
%\end{lemma}

%\begin{proof}[Proof:]
%The claim of the lemma is that $\cVdash\forall x(x \in \mathcal{D}(h,z) \leftrightarrow D(x,\mathcal{D}(h,z),z,h))$. This follows as the manifestation point construction gives a set of caliber 0.
%\end{proof}

\begin{definition}[$w$ is an ordinary capture from $z$ of $a$ via $x$]~\\

$Ct(w,z,a,x) \triangleq Cn(x,z)\wedge \Godelnum{v_{\underline{0}}}\in z\wedge \Godelnum{v_{\underline{1}}}\in z\wedge w=\{w|\exists v(v \in a \wedge \\ \indent\mathrm{T}\mbox{{\scriptsize SUB}}(\mbox{{\scriptsize SUB}}(x,v),w)\wedge \forall y(\mathrm{T}\mbox{{\scriptsize SUB}}(\mbox{{\scriptsize SUB}}(x,v),y)\rightarrow w=y)\}$
\end{definition}

\begin{definition}[$w$ is a librationist capture from $z$ of $a$]~\\

$\mathcal{C}(w,z,a) \triangleq(\exists x)(Cn(x,z)\wedge \Godelnum{v_{\underline{0}}}\in z\wedge \Godelnum{v_{\underline{1}}}\in z\wedge w=\{w|\exists v(v \in a \wedge \mathrm{T}\mbox{{\scriptsize SUB}}(\mbox{{\scriptsize SUB}}(x,v),w)\wedge \indent\forall y(\mathrm{T}\mbox{{\scriptsize SUB}}(\mbox{{\scriptsize SUB}}(x,v),y)\rightarrow w\bleq y)  \})$
\end{definition}

\begin{definition}[Librationist specification, short hand]~\\

$\langlebar x\in a|\alpha(x)\ranglebar\triangleq\{x|\exists y(y\in a\wedge\mathrm{T}\Godelnum{\alpha(\dot{y})\wedge \dot{y}=\dot{x}}\wedge\forall z(\mathrm{T}\Godelnum{\alpha(\dot{y})\wedge \dot{y}=\dot{z}}\rightarrow x\bleq z))\}$
\end{definition}

\begin{definition}[Librationist semantic specification, short hand]~\\

$\langlebar u\in a|\mathrm{T} \mbox{\scriptsize SUB}(t,u)\ranglebar\triangleq\{x|\exists y(y\in a\wedge\mathrm{T}t\Godelnum{\frown}\Godelnum{\wedge}\Godelnum{\frown}\Godelnum{ \dot{y}=\dot{x}}\wedge\forall z(\mathrm{T}t\Godelnum{\frown}\Godelnum{\wedge}\Godelnum{\frown}\Godelnum{ \dot{y}=\dot{z}}\rightarrow x\bleq z))\}$
\end{definition}

\begin{definition}[Domination requirement with parameters from $z$ relative to $u$]~\\
	\indent$D(x,y,h,z) \triangleq\exists s,t(Cn(t,z)\wedge \Godelnum{v_{\underline{0}}}\in z\wedge \Godelnum{v_{\underline{1}}}\in z\wedge s=\mbox{\scriptsize SUB}(t,y)\wedge x=\langlebar u \in h| \mathrm{T}\mbox{\scriptsize SUB}(s,u) \ranglebar)$

\end{definition}

By manifestation point:

\begin{definition}[Domination with parameters from $z$] ~\\

$\cVvdash\forall x(x \in \mathcal{D}(h,z) \leftrightarrow \mathbf{TT}D(x,\mathcal{D}(h,z),h,z))$
\end{definition}

\begin{definition} \ \ 
	
	\bigskip
	\begin{tabular}{ll}
		
		i) & $a\overset{u}{=}b\triangleq a\in u\wedge b\in u\wedge\forall v(v\in u\rightarrow(a\in v\rightarrow b\in v))$ \\
		ii) & $\{a,b\}^{u}\triangleq\{x|x\overset{u}{=}a\vee x\overset{u}{=}b\}$\\
		iii) & $\{a\}^{u}\triangleq\{a,a\}^{u} $\\
		iv) & $\mathcal{S}(a,u)\triangleq\{x|x\in u\land (x\in a\vee x\overset{u}{=}a) \}$\\
		v) & $\emptyset\triangleq\{x|x\in u \wedge x \overset{u}{\neq} x\}$\\
		vi) & $\Omega(a,u)\triangleq\{x|x\in u\land(\forall y)(\emptyset\in y\land\forall z(z\in y\to \mathcal{S}(z,u)\in y)\to x\in y) )\}$

	\end{tabular}
	
\end{definition}
		
\begin{definition}[$y$ is a Skolem of $b$ from $u$]~\\ \label{def12}

\begin{tabular}{lll}

$S(b,y,u)$ & $\triangleq$ & $(b\in y$\\

\vspace{3pt}
\ & \ & $\wedge\forall z(z\in y\rightarrow\Omega(z,u)\in y)$\\
\vspace{2pt}
\ & \ & $\wedge\forall z(z\in y\rightarrow\mathcal{D}(z,u)\in y)$\\
\vspace{4pt}
\ & \ & $\wedge\forall w, z(w\in y \wedge \mathcal{C}(z,u,w)\rightarrow z\in y)$\\

%\ & \ & $\wedge\forall z,m(z\in y\wedge m\in\Omega(\overset{u}{\emptyset
%},u)\rightarrow\overset{u}{A}(z,m)\in y)$\\
\vspace{4pt}
\ & \ & $\wedge\forall w,z(w\in y\wedge z\in y\rightarrow\{w,z\}^{u}\in y)$\\

\ & \ & $\wedge\forall z(z\in y\rightarrow\bigcup z\in y))\bigskip$
\end{tabular}
\end{definition}

%\begin{definition} The $\overset{u}{G}ang \ of \ x$\\ \label{def13}

%\vspace{0pt}

%\noindent $\overset{u}{\texttt{G}}(x)\triangleq\{w|\forall v(\overset{u}{G}(x,v)\rightarrow w\in
%v)\}$\end{definition}

\begin{definition}[u is a Fraenkel of  x] \label{def14}

$F(x,u) \triangleq \forall y(S(\emptyset ,y,u)\rightarrow x\in y)$
\end{definition}

\begin{definition}[The $definable \ echelon$ by manifestation point]~\\ \label{def15}

\indent $\cVvdash\forall x(x\in\dot{D}\leftrightarrow\mathbf{TT}F(x,\dot{D}))$

\end{definition}

\begin{lemma}  \ \   \label{lemma1}

\bigskip$%
\begin{array}
[c]{ll}%

\vspace{3pt}

i) & \cVvdash F(\emptyset,\dot{D})\\

\vspace{4pt}

ii) & \cVvdash\forall x(F(x,\dot{D})\rightarrow F(\Omega
(x,\dot{D}),\dot{D}))\\

\vspace{4pt}
iii) & \cVvdash\forall x(F(x,\dot{D})\rightarrow F(\mathcal{D}(x,\dot{D}),\dot{D}))\\

\vspace{3pt}
iv) & \cVvdash\forall x,y(F(x,\dot{D})\wedge\mathcal{C}(y,\dot{D},w)\rightarrow F(y,\dot{D}))\\

%v) & \cVvdash\forall x,y(F(x,\dot{D})\wedge y\in\Omega(\overset{\dot{z}}{\emptyset},\dot{D})\rightarrow F(\overset{\dot{z}}{A}(x,y),\dot{D}))\\

\vspace{4pt}

v) & \cVvdash\forall x,y(F(x,\dot{D})\wedge F(y,\dot{D})\rightarrow
F(\{x,y\}^{\dot{D}},\dot{D}))\\
vi) & \cVvdash\forall x(F(x,\dot{D})\rightarrow F(\bigcup x,\dot{D}))\\
\end{array}
$\noindent

\end{lemma}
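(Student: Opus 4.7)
The strategy is a direct predicate-logic argument, carried out at the maxim level via \textit{modus maximus} ($R_1$) together with the $Z$-regulation to generalize on noemata. Unfolding Definition \ref{def14}, each assertion $\cVvdash F(t,\dot{D})$ reduces to establishing $\cVvdash S(\emptyset,y',\dot{D})\to t\in y'$ for an arbitrary noema $y'$ and then invoking $R_Z$. Since $S(\emptyset,y',\dot{D})$ is a finite conjunction whose conjuncts are precisely the six closure clauses the lemma asks about, in each case the propositional content is almost immediate and the argument is mostly bookkeeping.

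First I would dispatch (i): the first conjunct of $S(\emptyset,y',\dot{D})$ is $\emptyset\in y'$, so $S(\emptyset,y',\dot{D})\to\emptyset\in y'$ is a classical propositional tautology and hence a maxim by $\pounds^i_\mathrm{M}$--$\pounds^{vi}_\mathrm{M}$ with $R_1$; applying $R_Z$ on $y'$ yields $\cVvdash F(\emptyset,\dot{D})$. For (ii), fixing $x$ and a witness $y'$ and assuming $F(x,\dot{D})$ and $S(\emptyset,y',\dot{D})$, the definition of $F$ applied to $y'$ yields $x\in y'$, and the second conjunct of $S$, instantiated at $x$, then yields $\Omega(x,\dot{D})\in y'$; generalizing on $y'$ and $x$ with $R_Z$ concludes the clause. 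Cases (iii), (v), and (vi) are entirely parallel, invoking the third, fifth, and sixth conjuncts of $S$ respectively, with (v) requiring two appeals to $F$ to extract both $x\in y'$ and $y\in y'$. For (iv), reading the free $w$ in the lemma statement as matching the noema of the hypothesis $F(x,\dot{D})$ (i.e.\ identifying $x$ with $w$), I extract $x\in y'$ from $F(x,\dot{D})$ and combine this with $\mathcal{C}(y,\dot{D},x)$ and the fourth conjunct of $S$ to conclude $y\in y'$, whence $F(y,\dot{D})$ after $R_Z$.

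The only issue that deserves to be flagged as an obstacle is keeping the maxim-status intact throughout. Because every prescription invoked here ($\pounds^i_\mathrm{M}$--$\pounds^{vi}_\mathrm{M}$ and the classical predicate logic they support) is maximal, and because $R_Z$ preserves maxim-status when the noema being generalized acts as a mere placeholder with no paradoxical role, every step lifts cleanly from the propositional to the maximal level. No minor posit and no paradoxical thesis is ever engaged, so each of (i)--(vi) genuinely yields a maxim $\cVvdash$ rather than a mere thesis $\cvdash$.
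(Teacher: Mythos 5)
Correct, and essentially the paper's own argument: the paper's proof consists of the single remark that the lemma follows ``by invoking classical predicate logical tautologies,'' which is exactly your routine of unfolding $F$ and $S$, extracting the relevant conjunct of $S(\emptyset,y',\dot{D})$, and generalizing (the paper can even dispense with $R_Z$, since its prescriptions are presupposed with all generalizations and generalization is available as a derived regulation). Your reading of the stray $w$ in clause (iv) as $x$ agrees with the paper's evident intent, since the corresponding clause of Lemma \ref{lemma3} uses $\mathcal{C}(y,\dot{D},x)$.
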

\noindent The proof of Lemma \ref{lemma1} is by invoking classical predicate logical tautologies. For the next lemma, recall Definition \ref{Kind}.

\begin{lemma} \label{lemma2}
	\ $\cVvdash KIND(\Omega(\emptyset,\dot{D}))$. 
\end{lemma}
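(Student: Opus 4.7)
The plan is to adapt the argument from Theorem 3 (i)--(iii) of \cite{Frode2012a}, where the analogous kind-ness of the natural numbers of $\mathbb{H}$ was established; the structural parallel is close, since $\Omega(\emptyset,\dot{D})$ is built from $\emptyset$ by the restricted successor $\mathcal{S}(\cdot,\dot{D})$, iterated along an impredicative closure, exactly in the way $\Omega(\overset{\dot{z}}{\emptyset},\dot{Z})$ was in the earlier setup.

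First I would establish that the base element $\emptyset=\{x\mid x\in\dot{D}\wedge x\overset{\dot{D}}{\neq}x\}$ is kind. This uses that the restricted identity $\overset{\dot{D}}{=}$ is kind (the same style of argument used for identity relative to a manifestation point in Section 4 and Lemma 8 of \cite{Frode2012a}), together with the fact that kind conditions are closed under negjunction and conjunction, so the defining formula of $\emptyset$ is kind. Second, I would prove the inductive step: if $a\in\dot{D}$ is kind then $\mathcal{S}(a,\dot{D})=\{x\mid x\in\dot{D}\wedge(x\in a\vee x\overset{\dot{D}}{=}a)\}$ is again kind, since its defining condition is the conjunction of two kind conditions ($x\in\dot{D}$, and the disjunction of $x\in a$ with $x\overset{\dot{D}}{=}a$).

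Third, using the $Z$-regulation ranging over the metatheoretic natural numbers, I would conclude that every finite iterate $\mathcal{S}^{n}(\emptyset,\dot{D})$ is a kind element of $\dot{D}$ that must lie in every inductive subclass of $\dot{D}$, hence is maximally a member of $\Omega(\emptyset,\dot{D})$. This gives one half of the kind-ness claim.

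The main obstacle is the converse direction: showing that anything \emph{not} $\overset{\dot{D}}{=}$-identical to some iterate $\mathcal{S}^{n}(\emptyset,\dot{D})$ is maximally a non-member. The plan is to invoke the very impredicativity of the definition of $\Omega$: the class $y=\{z\mid \exists n\in\omega\,(z\overset{\dot{D}}{=}\mathcal{S}^{n}(\emptyset,\dot{D}))\}$ (suitably represented as a member of $\dot{D}$ via the closure conditions of $\dot{D}$ established in Lemma \ref{lemma1}) is itself inductive in the sense required by the definition of $\Omega(\emptyset,\dot{D})$, so every member of $\Omega(\emptyset,\dot{D})$ must already lie in $y$. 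Combining this with the kind-ness of each iterate from the first two steps yields $\cVvdash\forall x(\mathbb{T}\,x\in\Omega(\emptyset,\dot{D})\vee\mathbb{T}\,x\notin\Omega(\emptyset,\dot{D}))$, i.e.\ $\cVvdash KIND(\Omega(\emptyset,\dot{D}))$. The delicate point is to verify that the class of iterates is genuinely available as an inductive \emph{set} inside $\dot{D}$'s closure — this is precisely where the earlier Frode2012a argument must be imitated carefully, relying on modus ascendens and $\pounds^{1}_{\mathrm{M}}$ to propagate truth through the induction and on the $Z$-regulation to collect the metatheoretic iterates into a single universally quantified thesis.
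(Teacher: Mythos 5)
Your proposal takes exactly the route the paper does: its entire proof of this lemma is the instruction to adapt the proofs of Theorem 3 (i)--(iii) of \citep{Frode2012a}, which is precisely the plan you open with and then elaborate (base case for $\emptyset$, kind-preservation under $\mathcal{S}(\cdot,\dot{D})$, propagation via modus ascendens and $\pounds^{1}_{\mathrm{M}}$, and the impredicative/least-inductive argument for maximal non-membership). One small easing of your ``delicate point'': the inner quantifier $\forall y$ in the definition of $\Omega(a,u)$ is unrestricted, so the inductive class of iterates need only be given by a term of \pounds\ via comprehension, not exhibited as a member of $\dot{D}$.
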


\begin{proof}[Proof:] Adapt the proofs of Theorem 3 (i)-(iii) of \cite{Frode2012a}. \end{proof}
\begin{lemma} \ \ \label{lemma3}
	\medskip
	
	$%
	\begin{array}
	[c]{ll}%
	
	\vspace{3pt}
	i) & \cVvdash\emptyset\in\{x|F(x,\dot{D})\}\\

\vspace{3pt}

	ii) & \cVvdash\forall x(x\in\{x|F(x,\dot{D})\}\rightarrow\Omega(x,\dot{D})\in\{x|F(x,\dot{D}))\})\\
	
	\vspace{2pt}
	iii) & \cVvdash\forall x(x\in\{x|F(x,\dot{D})\}\rightarrow\mathcal{D}(x,\dot{D})\in\{x|F(x,\dot{D})\})\\
	\vspace{3pt}
	iv) & \cVvdash\forall x,y(x\in\{x|F(x,\dot{D})\}\wedge \mathcal{C}(y,\dot{D},x)\rightarrow y\in\{x|F(x,\dot{D})\})\\
	%v) & \cVvdash\forall x,y(x\in\{x:F(x,\dot{D})\}\wedge y\in\overset{\dot{z}%
	%}{\Omega}(\emptyset)\rightarrow\overset{\dot{z}}{A}%
	%(x,y)\in\{x:F(x,\dot{D})\})\\
	\vspace{4pt}
	v) & \cVvdash\forall x,y(x\in\{x|F(x,\dot{D})\}\wedge y\in\{x|F(x,\dot{D})\}\rightarrow\{x,y\}^{\dot{D}}\in\{x|F(x,\dot{D}))\})\\
	vi) & \cVvdash\forall x(x\in\{x|F(x,\dot{D})\}\rightarrow\bigcup x\in
	\{x|F(x,\dot{D})\})
	\end{array}
	$
\end{lemma}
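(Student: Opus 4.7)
The plan is to mirror the proof of the analogous Lemma \ref{lem3} in the earlier (conditional ZF) development: for each clause, transport the meta-level closure fact supplied by Lemma \ref{lemma1} into an object-level membership statement about the sort $\{x|F(x,\dot{D})\}$ by applying \emph{modus ascendens maximus} ($R_{4}$), distributing $\mathbb{T}$ through the conditional with $\pounds^{1}_{\mathrm{M}}$, and then unwrapping $\mathbb{T}F(\cdot,\dot{D})$ into membership in $\{x|F(x,\dot{D})\}$ using alethic comprehension $\mathrlap{\mathbf{C}}A_{\mathrm{M}}$.

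First I would dispatch (i). From Lemma \ref{lemma1}(i), $\cVvdash F(\emptyset,\dot{D})$; by $R_{4}$ then $\cVvdash\mathbb{T}F(\emptyset,\dot{D})$; applying $\mathrlap{\mathbf{C}}A_{\mathrm{M}}$ to the abstract $\{x|F(x,\dot{D})\}$ yields $\cVvdash\emptyset\in\{x|F(x,\dot{D})\}$. For (ii), (iii), (v) and (vi) I would follow a uniform recipe: start with the corresponding clause of Lemma \ref{lemma1} (a universally generalized conditional), apply $R_{4}$ to obtain its truth-ascended form, push $\mathbb{T}$ through the outer quantifier via $\pounds^{7}_{\mathrm{M}}$, push $\mathbb{T}$ through the conditional via $\pounds^{1}_{\mathrm{M}}$, and finally use $\mathrlap{\mathbf{C}}A_{\mathrm{M}}$ on both sides to replace each $\mathbb{T}F(t,\dot{D})$ by $t\in\{x|F(x,\dot{D})\}$. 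This is exactly the template used for (ii), (iii) and (v)--(vii) of Lemma \ref{lem3} earlier in the paper.

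Clause (iv) requires a little extra care because the antecedent is a conjunction $x\in\{x|F(x,\dot{D})\}\wedge\mathcal{C}(y,\dot{D},x)$, only the first conjunct of which is already a veiled $\mathbb{T}$-statement. After applying $R_{4}$ and $\pounds^{1}_{\mathrm{M}}$ to Lemma \ref{lemma1}(iv) I would reduce to showing $\mathbb{T}F(x,\dot{D})\wedge\mathcal{C}(y,\dot{D},x)\to\mathbb{T}(F(x,\dot{D})\wedge\mathcal{C}(y,\dot{D},x))$. The conjunction commutes with $\mathbb{T}$ routinely from $\pounds^{1}_{\mathrm{M}}$ and $\pounds^{9}_{m}$ applied to maxims, so the residual point is the ``kindness'' step: $\mathcal{C}(y,\dot{D},x)\to\mathbb{T}\mathcal{C}(y,\dot{D},x)$. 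Here I would invoke Lemma \ref{lemma2}, together with the fact that members of $\dot{D}$ carry their characterizing set-theoretic properties maximally (so that $\cVvdash\forall x(x\in\Omega(\emptyset,\dot{D})\to\mathbb{T}x\in\Omega(\emptyset,\dot{D}))$ and analogous stability for the substitution and order-predicate clauses implicit in the definition of $\mathcal{C}$), matching the remark appended to the earlier Lemma \ref{lem3} proof.

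The hard part will be precisely that stability step for (iv): verifying that every constituent of $\mathcal{C}(y,\dot{D},x)$ -- the codings, the applications of \textsc{sub}, the bounded quantifiers over $\dot{D}$, and the order relation $\bleq$ -- is maximally stable on the relevant arguments, so that its assertion can be ascended to a $\mathbb{T}$-assertion without descending into the minor stratum. Everything else is a turning of the ascend-and-descend handle already exercised in the proof of Lemma \ref{lem3}.
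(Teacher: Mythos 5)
Your proposal follows essentially the paper's own (very compressed) proof: each clause is transported from the corresponding clause of Lemma \ref{lemma1} by \emph{modus ascendens maximus} ($R_{4}$), distribution of $\mathbb{T}$ over the quantifier and the conditional via $\pounds^{7}_{\mathrm{M}}$ and $\pounds^{1}_{\mathrm{M}}$, and alethic comprehension $\mathrlap{\mathbf{C}}A_{\mathrm{M}}$ to turn $\mathbb{T}F(t,\dot{D})$ into $t\in\{x|F(x,\dot{D})\}$. The one divergence is where the auxiliary stability work is located: the paper invokes Lemma \ref{lemma2} for clause (ii), noting $\cVvdash\forall x(x\in\Omega(\emptyset,\dot{D})\rightarrow\mathbb{T}x\in\Omega(\emptyset,\dot{D}))$, whereas you place the extra work at clause (iv). Your allocation in fact tracks what the template requires: in (ii) the term $\Omega(x,\dot{D})$ occurs only in the consequent, so $R_{4}$, $\pounds^{1}_{\mathrm{M}}$ and $\mathrlap{\mathbf{C}}A_{\mathrm{M}}$ already suffice, while (iv) is the clause whose antecedent has a conjunct, $\mathcal{C}(y,\dot{D},x)$, not already of the form $\mathbb{T}(\cdot)$, so that $\mathcal{C}(y,\dot{D},x)\rightarrow\mathbb{T}\mathcal{C}(y,\dot{D},x)$ is needed before $\pounds^{1}_{\mathrm{M}}$ can bite; the paper's attribution to (ii) reads as a carry-over from the earlier construction, where the capture clause had $y\in\Omega$ in its antecedent. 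Two small cautions: the adjunction step $\mathbb{T}A\wedge\mathbb{T}B\rightarrow\mathbb{T}(A\wedge B)$ should be obtained from $R_{4}$ and $\pounds^{1}_{\mathrm{M}}$ applied to the tautology $A\rightarrow(B\rightarrow(A\wedge B))$ rather than from the minor prescription $\pounds^{9}_{m}$; and the stability of $\mathcal{C}$ is not discharged by Lemma \ref{lemma2} alone but needs the kindness of identity, $\cVvdash KIND(\dot{D})$, the stability of the arithmetized syntax clauses in the definition of $\mathcal{C}$, and $\pounds^{6}_{\mathrm{M}}$ to move $\mathbb{T}$ across the existential quantifier. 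You flag this as the hard part without carrying it out, but the paper's sketch glosses over it entirely, so on this point your route is, if anything, the more candid one.
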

\medskip

\begin{proof}[Proof:] The proof of i) is by invoking modus ascendens and
	alethic comprehension ($\mathrlap{\mathbf{C}}A_{\mathrm{M}} $) on Lemma \ref{lemma1} \hspace{1pt} i). For iii)-vi),\ invoke
	also $\pounds^{1}_{\mathrm{M}}$ ($LO1_{\mathrm{M}}$ of p. 339 of \cite{Frode2012a}), and for ii) also Lemma \ref{lemma2} is invoked, noting
	that $\cVvdash\forall x(x\in\Omega(\emptyset,\dot{D})\rightarrow\mathbb{T}x\in\Omega(\emptyset,\dot{D}))$.\end{proof}

\begin{lemma} \label{lemma4}
	$\cVvdash KIND(\{x|F(x,\dot{Z})\})$
\end{lemma}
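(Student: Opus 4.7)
The plan is to treat this as a direct adaptation of the proof of Theorem~3~(iii) of \cite{Frode2012a}, which is the canonical argument for establishing kind-ness of an iteratively generated class. Unpacking Definition~\ref{Kind}, we must establish that $\cVvdash \forall x(\mathbb{T}(x\in\{x|F(x,\dot{Z})\})\vee\mathbb{T}(x\notin\{x|F(x,\dot{Z})\}))$. Note that $\dot{Z}$ is the manifestation point of $F(x,u)$ introduced earlier (Definition~\ref{def15}), and $F(x,\dot{Z})$ expresses that $x$ lies in the Flight generated by iterating the Gaggle operator starting from $\overset{\dot{z}}{\emptyset}$.

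First, I would invoke the closure properties collected in Lemma~\ref{lem3} to establish that $\{x|F(x,\dot{Z})\}$ contains $\overset{\dot{z}}{\emptyset}$ and is closed under each of the generating operations $\Omega(-,\dot{Z})$, $\overset{\dot{z}}{\Pi}$, $\overset{\dot{z}}{C}(-,-)$, $\{-,-\}_{\dot{Z}}$, $\bigcup$, and $\overset{\dot{Z}}{\texttt{G}}$. These closure facts, propagated through modus ascendens maximus ($R_4$) together with $\mathrlap{\mathbf{C}}A_{\mathrm{M}}$ and $\pounds^{1}_{\mathrm{M}}$, show that membership-in-$\{x|F(x,\dot{Z})\}$ is witnessed for each term by a finite generative derivation from $\overset{\dot{z}}{\emptyset}$, with the witness being a maxim.

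Second, for the complementary side, I would argue from the minimality built into the manifestation-point definition of $\dot{Z}$: a term $x$ that is not produced by any such derivation must be placed outside $\{x|F(x,\dot{Z})\}$ at every sufficiently late ordinal in the semantic process, hence $\mathbb{T}(x\notin\{x|F(x,\dot{Z})\})$ is a maxim. This step uses that in the Herzberger-style semantics of Section~\ref{The semantics of pounds} the absence of a grounding stage for $x\in\{x|F(x,\dot{Z})\}$ stabilises to falsehood by the closure ordinal $\fat\Koppa$ (cf.\ Theorem~\ref{Herzberger}). Combining both sides, every $x$ falls into one of the two disjuncts as a maxim, giving $KIND(\{x|F(x,\dot{Z})\})$.

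The step I expect to be the main obstacle is the treatment of closure under the Gaggle operator $\overset{\dot{Z}}{\texttt{G}}$, since $\overset{\dot{Z}}{\texttt{G}}(x)$ is itself defined by a manifestation point as an intersection of all $\overset{\dot{Z}}{G}$athering sorts containing $x$. One must verify that this impredicative definition does not disturb the maximality of the membership verdicts; concretely, that at the stages where $\overset{\dot{Z}}{\texttt{G}}(x)$ stabilises as a member, the associated membership formulas are already stable. Here the pedantic semantic machinery of Section~\ref{The semantics of pounds} -- together with the wellfoundedness and countability of the stabilisation ordinals -- supplies exactly the leverage needed, mirroring the iterative-kind argument of Theorem~3~(iii) of \cite{Frode2012a}.
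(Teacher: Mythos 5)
Your two-sided decomposition --- positive membership propagated as a maxim through the closure properties of the generated class, negative membership obtained from the groundedness/minimality of the manifestation point in the Herzberger process --- is exactly the shape of the paper's argument, whose proof reads in full: adapt the proof of Theorem 3 iii) of \cite{Frode2012a} and invoke Lemma \ref{lemma3}. So in substance you take the same route as the paper. The one correction needed is contextual: this lemma (Lemma \ref{lemma4}) belongs to the definable-echelon section, so the generating operations under which $\{x|F(x,\cdot)\}$ must be shown closed are $\Omega(-,\dot{D})$, domination $\mathcal{D}(-,\dot{D})$, librationist capture $\mathcal{C}(-,\dot{D},-)$, pairing $\{-,-\}^{\dot{D}}$ and union, as collected in Lemma \ref{lemma3}; the $\dot{Z}$ in the statement is evidently a slip for $\dot{D}$, since the immediately following lemma infers $\cVvdash KIND(\dot{D})$ from this one ``given the definition of $\dot{D}$''. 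The Gaggle/Flight machinery ($\overset{u}{\Pi}$, $\overset{u}{C}$, the Gaggle operator) that you cite, and in particular the closure-under-Gaggle step you flag as the main obstacle, comes from a section that is excised from the compiled paper by an \texttt{\textbackslash iffalse} directive and plays no role here, so that worry is moot; the impredicative operator whose stabilisation actually needs checking in this context is $\mathcal{D}(-,\dot{D})$. With the operations swapped accordingly, your argument is the paper's.
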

\begin{proof}[Proof:] Adapt the proof of Theorem 3 iii) of \cite{Frode2012a} and invoke Lemma \ref{lemma3}.\end{proof}

\begin{lemma}
$\cVvdash KIND (\dot{D})$
\end{lemma}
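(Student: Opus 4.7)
The strategy is to transfer the $KIND$ property from $\{x|F(x,\dot{D})\}$, established in Lemma \ref{lemma4}, onto $\dot{D}$ itself. The two differ only by an extra $\mathbb{T}$ in their membership conditions: alethic comprehension $\mathrlap{\mathbf{C}}A_{\mathrm{M}}$ gives $\cVvdash u\in\{x|F(x,\dot{D})\}\leftrightarrow \mathbb{T}F(u,\dot{D})$, while Definition \ref{def15} gives $\cVvdash u\in\dot{D}\leftrightarrow \mathbf{TT}F(u,\dot{D})$. So $KIND(\dot{D})$ should amount to pushing the $\mathbb{T}$ in $KIND(\{x|F(x,\dot{D})\})$ through one extra truth-layer.

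Fixing a noema $u$, I would apply $R_{4}$ and $\pounds^{1}_{\mathrm{M}}$ to lift both biconditionals one layer, obtaining $\cVvdash \mathbb{T}u\in\dot{D}\leftrightarrow \mathbb{T}\mathbf{TT}F(u,\dot{D})$ and, via a prior classical contraposition, $\cVvdash \mathbb{T}u\notin\dot{D}\leftrightarrow \mathbb{T}\neg\mathbf{TT}F(u,\dot{D})$. The analogous rewriting of Lemma \ref{lemma4} at $u$ yields $\cVvdash \mathbf{TT}F(u,\dot{D}) \vee \mathbb{T}\neg\mathbb{T}F(u,\dot{D})$, and the task reduces to upgrading each disjunct by an extra $\mathbb{T}$.

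For the left disjunct, if $\mathbf{TT}F(u,\dot{D})$ holds as a maxim then $R_{4}$ directly yields $\cVvdash \mathbb{T}\mathbf{TT}F(u,\dot{D})$, hence $\cVvdash \mathbb{T}u\in\dot{D}$. For the right disjunct, if $\mathbb{T}\neg\mathbb{T}F(u,\dot{D})$ holds as a maxim, then $\pounds^{2}_{\mathrm{M}}$ at $A=\mathbb{T}F(u,\dot{D})$ gives $\mathbf{TT}F\to \neg\mathbb{T}\neg\mathbb{T}F$, whose classical contrapositive is $\mathbb{T}\neg\mathbb{T}F\to \neg\mathbf{TT}F$; modus maximus delivers $\cVvdash \neg\mathbf{TT}F(u,\dot{D})$, and a further $R_{4}$ lifts this to $\cVvdash \mathbb{T}\neg\mathbf{TT}F(u,\dot{D})$, whence $\cVvdash \mathbb{T}u\notin\dot{D}$. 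Either way $\cVvdash \mathbb{T}u\in\dot{D} \vee \mathbb{T}u\notin\dot{D}$, and the $Z$-regulation $R_{Z}$ generalizes over $u$ to deliver $\cVvdash KIND(\dot{D})$.

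The one delicate point is legitimizing the binary case split on a maxim disjunction, since a priori a maxim $C_{1}\vee C_{2}$ need not have a maxim disjunct. In our situation both disjuncts have the shape $\mathbb{T}C$, and negjunction completeness combined with disjunctive syllogism forces at least one to be a thesis; the $R_{4}$-ascent above then supplies the required maximal conclusion. Should that bookkeeping feel fragile, the uniform substitute is $\pounds^{4}_{\mathrm{M}}$ with a paradoxical witness $B$ supplied by Theorem \ref{CarnapGodel} (so that $\cVvdash \neg(\mathbb{T}B\vee\mathbb{T}\neg B)$), whence $\cVvdash \mathbb{T}A\to \mathbf{TT}A$ holds as a maxim uniformly in $A$ and the required left-to-right ascents are obtained without any case analysis.
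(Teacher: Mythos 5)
Your overall decomposition — transferring $KIND$ from $\{x|F(x,\dot{D})\}$, as supplied by Lemma \ref{lemma4}, to $\dot{D}$ across the single extra truth-layer in the manifestation-point definition — is exactly what the paper's one-line proof (``a consequence of Lemma \ref{lemma4} given the definition of $\dot{D}$'') intends, and your within-case manipulations via $\pounds^{1}_{\mathrm{M}}$, $\pounds^{2}_{\mathrm{M}}$, $R_{4}$, modus maximus and $R_{Z}$ are fine as far as they go. The genuine gap is the step that licenses the case split. A maxim veljunction need not have a maxim disjunct, and your patch conflates thesishood with maximhood: negjunction completeness plus disjunctive syllogism at best yields that one disjunct is a \emph{thesis}, but $R_{4}$ requires a maxim premise ($R_{5}$ would only return a minor), and it is false that a thesis of the shape $\mathbb{T}C$ is automatically a maxim — for the Liar sentence $L$, $\mathbb{T}L$ is a (minor) thesis and not a maxim. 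So ``at least one disjunct is a thesis, then ascend by $R_{4}$'' does not go through.

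The proposed uniform fallback is also unsound. There is no sentence $B$ with $\cVvdash\lnot(\mathbb{T}B\vee\mathbb{T}\lnot B)$: at every successor stage $\delta+1$ of the semantic progression, $\mathbb{T}B$ holds iff $B$ held at $\delta$ and $\mathbb{T}\lnot B$ holds iff $\lnot B$ held at $\delta$, so by maximal progression consistency $\mathbb{T}B\vee\mathbb{T}\lnot B$ holds at all successors for every $B$; its negjunction is therefore never stably true, hence never a maxim (for paradoxical $B$ it is merely a minor). Consequently the guard of $\pounds^{4}_{\mathrm{M}}$ can never be discharged, and indeed $\mathbb{T}A\rightarrow\mathbb{T}\mathbb{T}A$ is \emph{not} a maxim for unstable $A$ — which is precisely why that prescription carries the guard. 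What actually makes the lemma true is not a pointwise upgrade of either disjunct: writing $G(u)$ for $u\in\{x|F(x,\dot{D})\}$ and using your lifted biconditionals to reduce the goal to $\cVvdash\mathbb{T}\mathbb{T}G(u)\vee\mathbb{T}\lnot\mathbb{T}G(u)$, observe that this disjunction holds automatically at every successor stage (by the same successor computation as above, applied to $\mathbb{T}G(u)$), while at limit stages on a tail it follows from the stability of the kindness disjunction of Lemma \ref{lemma4}, since the eventual constancy of $G(u)$ below such a limit yields eventual constancy of $\mathbb{T}G(u)$ there. It is in this semantic sense that the paper's one-liner should be cashed out (or else a different deductive route found); extracting a maxim disjunct is neither available nor needed.
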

\begin{proof}[Proof:]
A consequence of \ref{lemma4} given the definition of $\dot{D}$.
\end{proof}
\begin{definition}
	Let $\mathbb{H}$ be defined by manifestation point as in section 9 of \cite{Frode2012a}, so that we can show that $\cVvdash\forall x(x\in \mathbb{H}\leftrightarrow
	KIND(x)\wedge x\subset \mathbb{H})$.
	
	\smallskip
	\noindent Notice that we here and in the following have ``$\mathbb{H}$'' for ``$H$'' to emphasize.
\end{definition}

\begin{definition}\label{caliberdef} The caliber of sorts\\
	
	\noindent A cognomen $a$ has caliber $0\hspace{1pt}\triangleq$ $a$ is $\mathrm{T}$ or $a$ is \euro \ or for some $A$ and $x$, $a$ is $\hat \ xA$.\\
	A cognomen $a$ has caliber $n+1\triangleq$ $a$ is $\downarrow\hspace{-2pt} bc$ and the maximal caliber of $b$ and $c$ is $n$.
\end{definition}

\begin{theorem}The Regularity Rule for $\mathbb{H}$\\ \label{Regularityrule}
	
	\smallskip
	If $\cVvdash a\in \mathbb{H}$ then $\cVvdash (\exists x)(x\in a) \rightarrow (\exists x)(x\in a \wedge \forall y(y\in a \rightarrow y\notin x))$
\end{theorem}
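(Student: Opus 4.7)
The plan is to prove the Regularity Rule for $\mathbb{H}$ by induction on the caliber of $a$ (Definition \ref{caliberdef}). For the base case $a$ has caliber $0$, i.e. $a$ is $\mathrm{T}$, \euro, or of the form $\hat{}\,yA$, and here I would reproduce the argument given in Section 9 of \cite{Frode2012a}: suppose for contradiction that $\cVvdash a\in\mathbb{H}$ while the conclusion $\exists x(x\in a)\rightarrow\exists x(x\in a\wedge\forall y(y\in a\rightarrow y\notin x))$ fails, so that $\cVdash\exists x(x\in a)\wedge\forall x(x\in a\rightarrow\exists y(y\in a\wedge y\in x))$. Because $a$ is hereditarily kind this minor upgrades to a maxim, which forces either an $\in$-cycle or an infinitely descending $\in$-chain lying entirely within $\mathbb{H}$. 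Unfolding the manifestation-point definition of $\mathbb{H}$ back through the semantic process then yields $0\vDash\exists x(x\in\mathbb{H})$, contradicting the minimalist initial stipulation built into that definition.

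For the inductive step, let $b$ have caliber $n+1$ and assume the rule holds for all goods of caliber at most $n$. By Definition \ref{caliberdef}, $b$ is of the form $\downarrow cd$ with $c$ and $d$ of caliber at most $n$. Consider the caliber-$0$ term $a^{\ast}\triangleq\{x|x\notin c\wedge x\notin d\}$. The disunion prescript $\downarrow_{\mathrm{M}}$ gives $\cVvdash\forall x(x\in b\leftrightarrow x\notin c\wedge x\notin d)$, and alethic comprehension $\mathrlap{\mathbf{C}}A_{\mathrm{M}}$ together with modus ascendens gives $\cVvdash\forall x(x\in a^{\ast}\leftrightarrow\mathbb{T}(x\notin c\wedge x\notin d))$, so $b$ and $a^{\ast}$ are coextensional at the maximal level. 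Since $\cVvdash\forall x(x\in\mathbb{H}\leftrightarrow KIND(x)\wedge x\subset\mathbb{H})$ depends only on the membership of $x$ and on the memberships of $x$'s members, $\cVvdash b\in\mathbb{H}$ propagates to $\cVvdash a^{\ast}\in\mathbb{H}$. The base case applied to $a^{\ast}$ yields the regularity conclusion for $a^{\ast}$, and since $a^{\ast}$ and $b$ share members this transfers directly to $b$.

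The main obstacle is precisely the coextensional transfer $\cVvdash b\in\mathbb{H}\Rightarrow\cVvdash a^{\ast}\in\mathbb{H}$: because extensionality fails in \pounds \ one cannot simply identify $b$ and $a^{\ast}$. The verification requires showing that both conjuncts in the manifestation-point recursion for $\mathbb{H}$ --- being a kind and having all members in $\mathbb{H}$ --- are preserved under the maxim-level coextensionality supplied by $\downarrow_{\mathrm{M}}$ combined with alethic comprehension. Once that is in place the induction closes and the regularity rule extends from the caliber-$0$ analysis of \cite{Frode2012a} to sets of arbitrary caliber.
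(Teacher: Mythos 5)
Your proposal matches the paper's own proof in essentials: the caliber-$0$ case is handled by the quoted argument of the earlier work, and higher calibers are reduced to it via the caliber-$0$ set $\{x\,|\,x\notin c\wedge x\notin d\}$, whose goodness and coextensionality with $b=\,\downarrow\!cd$ (via the disunion prescript and alethic comprehension) transfer regularity back to $b$. The transfer step you flag as the main obstacle is asserted just as briefly in the paper itself, so your reconstruction is faithful, with only the cosmetic difference of an explicit induction on caliber in place of the paper's minimal-counterexample phrasing.
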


\begin{proof}[Proof:]
	We need to extend the argument of \cite{Frode2012a}, and therefore first quote from its page 353:\\ 
	
	%\smallskip 
	
	``We wrote that $H$ is a sort of iterative sorts. This holds in the following
	sense of a regularity rule:\medskip
	
	\qquad If $\cVvdash b\in H$ then $\cVvdash\exists x(x\in b)\supset
	\exists x(x\in b\wedge \forall y(y\in b\supset y\notin x))$\medskip
	
	We can justify the regularity rule briefly as follows: Suppose instead that $%
	\cVvdash b\in H$ and $\cVdash \exists x(x\in b)\wedge \forall x(x\in
	b\supset \exists y(y\in b\wedge y\in x))$. As $b$ is hereditarily kind it
	follows that $\cVvdash\exists x(x\in b)\wedge \forall x(x\in b\supset
	\exists y(y\in b\wedge y\in x))$. But the latter can only be satisfied if $b$
	is circular, a cycle or has an infinitely descending chain. Given the nature
	of $H$, it would follow that $X(0)\vDash \exists x(x\in H)$, which is
	contrary to our minimalist stipulations. Hence, $H$ only contains
	well-founded sorts as maximal members.''
	
	The proof of \citep{Frode2012a} only covers sets with caliber $0$. Suppose $n+1$ is the smallest caliber of a set $b=\downarrow\hspace{-2pt}cd$ such that $\cVvdash b\in \mathbb{H}$ and $b$ is not regular, i.e. well-founded. But this is impossible as the caliber $0$ set $\{x:x\notin c \wedge x\notin d\}$ is then also good, i.e. $\cVvdash\{x:x\notin c \wedge x\notin d\}\in \mathbb{H}$; clearly, however, $\cVvdash b\subset\{x:x\notin c \wedge x\notin d\} $, so $b$ is regular as $\{x:x\notin c \wedge x\notin d\}$ is regular.
\end{proof}

\begin{postulate}[The Skolem-Fraenkel Postulate]\label{SFP}~\\

If $b$ is good and $\mathcal{C}(w,\dot{D},b)$, $w$ is good. More precisely, $\cVvdash b\in\mathbb{H}\land\mathcal{C}(w,\dot{D},b)\to w\in\mathbb{H}$.
\end{postulate}

We write ``$SFP$'' to alert that the proof depends on the Skolem-Fraenkel Postulate.

\begin{lemma}\label{postrem1}
	Given Cannon \ref{Cannon},   Postulate \ref{SFP} holds iff $\cVvdash\dot{D}\in\mathbb{H}$.
\end{lemma}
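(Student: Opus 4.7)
The plan is to verify the biconditional in both directions, keeping Cannon \ref{Cannon} in force throughout.

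For the easier $(\Leftarrow)$ direction, assume $\cVvdash\dot{D}\in\mathbb{H}$ and let $b\in\mathbb{H}$ with $\mathcal{C}(w,\dot{D},b)$ witnessed by some $x$ satisfying $Cn(x,\dot{D})$. The restriction clause $Cn(x,\dot{D})$ merely bounds every quantifier of $x$ to $\dot{D}$, and this bounding can be internalized by taking $\dot{D}$ as an explicit parameter; the $\bleq$-minimality clause then makes the resulting two-place condition functional in the first coordinate. Hence $\mathcal{C}(w,\dot{D},b)$ is an instance of ordinary $Cpr(w,b)$ with an additional parameter $\dot{D}$, and since $\dot{D}\in\mathbb{H}$ by hypothesis, Cannon delivers $w\in\mathbb{H}$, which is precisely the content of SFP.

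For the $(\Rightarrow)$ direction, assume SFP. Lemma \ref{lemma4} combined with Definition \ref{def15} already yields $\cVvdash KIND(\dot{D})$, so it suffices to show $\cVvdash\dot{D}\subseteq\mathbb{H}$: hereditary kindness then follows from $KIND(\dot{D})$ plus hereditary $\mathbb{H}$-membership, and well-foundedness of $\dot{D}$ follows by Theorem \ref{Regularityrule} applied element-wise, since any descending chain enters $\mathbb{H}$ after one step and $\mathbb{H}$ admits no such chains. I would establish $\dot{D}\subseteq\mathbb{H}$ by verifying, clause by clause, that every closure operation appearing in the Fraenkel predicate $F(\cdot,\dot{D})$ of Definitions \ref{def12} and \ref{def14} preserves membership in $\mathbb{H}$. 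The base case $\emptyset\in\mathbb{H}$ and the pair and union clauses are routine consequences of the standard closure properties of $\mathbb{H}$. For the $\Omega$-closure, the observation is that for $x\in\mathbb{H}$ the set $\Omega(x,\dot{D})$ is coextensional with an $\mathbb{H}$-internal $\omega$-style iteration above $x$, which lies in $\mathbb{H}$ by Cannon. For the $\mathcal{D}$-closure one appeals to the Impredicativity Lemma \ref{impredicativity} together with Cannon, which together produce a coextensional object inside $\mathbb{H}$. The capture clause, finally, is exactly the statement of SFP. Assembling these clauses by the inductive pattern of Lemmas \ref{lemma1} and \ref{lemma3} then gives $\cVvdash\dot{D}\subseteq\mathbb{H}$, whence $\cVvdash\dot{D}\in\mathbb{H}$.

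The main obstacle lies in the $(\Rightarrow)$ direction, specifically in carrying out the $\Omega$- and $\mathcal{D}$-closure steps without circular use of the conclusion $\dot{D}\in\mathbb{H}$. Because $\dot{D}$ is defined impredicatively via a manifestation point, one has to verify externally that the $\dot{D}$-relative constructions $\Omega(x,\dot{D})$ and $\mathcal{D}(x,\dot{D})$ agree extensionally with parallel $\mathbb{H}$-internal constructions that invoke only parameters already known to lie in $\mathbb{H}$; only then may Cannon be applied cleanly. Once these coextensionality lemmas are in hand, SFP discharges the single genuinely librationist clause---capture---and the induction concludes in the routine manner.
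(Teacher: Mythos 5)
The paper offers no proof to compare against (the lemma is explicitly left as an exercise), so your argument must stand on its own, and as written each direction has a genuine gap. In your ($\Leftarrow$) direction, the claim that $\mathcal{C}(w,\dot{D},b)$ is an instance of $Cpr(w,b)$ with the extra parameter $\dot{D}$ does not survive inspection of the definitions: the defining term of a librationist capture contains the minimality clause $\forall y(\ldots\rightarrow w\bleq y)$, and $\bleq$ is defined (Definition \ref{deforder}) through the enumerator \mbox{\euro}, whereas $Cnd$ --- and hence $Cpt$, $Cpr$ and Cannon \ref{Cannon} --- is restricted to coded conditions in which neither $\mathrm{T}$ nor \mbox{\euro}\ occurs. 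Functionality of the least-witness condition is not the issue; expressibility inside the $Cnd$-fragment is. Nor can \mbox{\euro}\ be absorbed as a parameter from $\mathbb{H}$: prescript \mbox{\euro}$^{5}_{\mathrm{M}}$ gives $KIND(\mbox{\euro})$, but \mbox{\euro}\ is not hereditarily kind, hence not good. Section \ref{Aus} itself stresses that librationist capture outstrips ordinary capture in precisely these non-extensional contexts, so the step ``Cannon delivers $w\in\mathbb{H}$'' is exactly where a further argument (or an explicitly defended, more liberal reading of the Cannon's parameter clause) is required.

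In your ($\Rightarrow$) direction the scaffolding is right --- show $\mathbb{H}$ is closed under every clause of the Skolem predicate $S(\emptyset,\cdot,\dot{D})$ of Definition \ref{def12}, instantiate the universally quantified $y$ in $F(x,\dot{D})$ of Definition \ref{def14} by $\mathbb{H}$, strip the $\mathbf{TT}$ via kindness, and combine with $KIND(\dot{D})$; this is what the two SFP-tagged lemmas following the statement are for. But your handling of the $\Omega$- and $\mathcal{D}$-clauses by coextensionality with ``$\mathbb{H}$-internal'' constructions is illicit here: \pounds\ is radically non-extensional (Theorem \ref{surprise on identity}), membership in $\mathbb{H}$ requires kindness of the particular term, and the Cannon's conclusion attaches only to the capture term it is applied to, not to terms coextensional with it --- the Impredicativity Lemma \ref{impredicativity} is phrased ``a kind $w$ coextensional with\dots'' for exactly this reason. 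The direct route is to observe that every member of $\mathcal{D}(x,\dot{D})$ is itself a librationist semantic specification from $x$ governed by a $Cn(\cdot,\dot{D})$-condition, hence essentially an instance of $\mathcal{C}(\cdot,\dot{D},x)$ covered by Postulate \ref{SFP}, and then use the kindness lemmas to place $\mathcal{D}(x,\dot{D})$ in $\mathbb{H}$; the capture clause is SFP verbatim. Finally, the excursus on well-foundedness is superfluous: by the defining property of $\mathbb{H}$, $\cVvdash\dot{D}\in\mathbb{H}$ needs exactly $KIND(\dot{D})$ (already secured before the lemma) together with $\cVvdash\dot{D}\subset\mathbb{H}$.
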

\begin{proof}[Proof:]
Exercise.
\end{proof}
\begin{remark}
	If one  wants ``higher infinities'' in a Skolem relative sense in our framework one will need adequate postulates (in analogy with the content of Lemma \ref{postrem1}) which we intuit are then equivalent under the Cannon \ref{Cannon} with the goodness of such a Skolem relative ``higher'' infinity. As one may want a variety of postulates with a variety of strengths we do not replace the term \textit{``Skolem-Fraenkel Postulate''} with a metonym.
\end{remark}
\begin{lemma}\label{goodcapture} 
$\cVvdash \forall x,y(x \in \mathbb{H} \wedge \mathcal{C}(y,\dot{D},x)\rightarrow y \in \mathbb{H})$
\end{lemma}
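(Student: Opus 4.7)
The plan is to derive the claim as an immediate consequence of the Skolem-Fraenkel Postulate (Postulate~\ref{SFP}) by universal generalization via the Z-regulation $R_Z$ of Section~\ref{Regulations}.

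First, I would observe that the precise form of Postulate~\ref{SFP}, namely $\cVvdash b\in\mathbb{H}\land\mathcal{C}(w,\dot{D},b)\to w\in\mathbb{H}$, is posited with $b$ and $w$ as schematic noemata. Because the cognomina $\mathbb{H}$ and $\dot{D}$ are each defined by manifestation point independently of the choice of $b$ and $w$, the content of the postulate is alphabetologically invariant, so that for every pair of noemata $x,y$ the substitution instance $\cVvdash x\in\mathbb{H}\land\mathcal{C}(y,\dot{D},x)\to y\in\mathbb{H}$ is likewise a maxim of \pounds.

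Second, with the maxim in hand for every choice of noemata, I would apply the Z-regulation $R_Z$ twice in succession: generalize first on $y$ to obtain $\cVvdash\forall y(x\in\mathbb{H}\land\mathcal{C}(y,\dot{D},x)\to y\in\mathbb{H})$ for each noema $x$, and then generalize on $x$ to conclude $\cVvdash\forall x,y(x\in\mathbb{H}\land\mathcal{C}(y,\dot{D},x)\to y\in\mathbb{H})$, as desired.

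There is no substantive obstacle once Postulate~\ref{SFP} is granted; the only care required is to ensure the schematic character of the postulate is read as licensing the conclusion uniformly for all noemata, which is precisely why the argument leans on $R_Z$ rather than on any ordinary universal generalization that librationism does not countenance in view of its rejection of the generality-interpretation (cfr.\ Section~\ref{sec:nominism}). The heavy lifting is already absorbed into Postulate~\ref{SFP} itself, so the present lemma is essentially a bookkeeping step that repackages the postulate with explicit quantifiers and thereby furnishes the form in which it will be applied in subsequent arguments about the definable echelon.
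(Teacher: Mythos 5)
Your derivation is correct and is essentially the paper's own route: the paper's proof is literally ``Exercise.\ SFP.'', and reading Postulate~\ref{SFP} schematically so that every noemata instance of $\cVvdash b\in\mathbb{H}\land\mathcal{C}(w,\dot{D},b)\to w\in\mathbb{H}$ is a maxim and then closing under the quantifiers by two applications of $R_{Z}$ is exactly the intended bookkeeping. One caution: do not rest the first step on ``alphabetological invariance'', since renaming noemata is not in general maxim-preserving in \pounds\ (nominism: e.g.\ $v_{\underline{0}}=\mathrm{T}$ is a prescribe and $v_{\underline{13}}\neq v_{\underline{96}}$ is a maxim); what licenses the instances is simply that the Postulate is posited as a schema over arbitrary terms and noemata, as you yourself note at the end.
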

\begin{proof}[Proof:]
Exercise. SFP.
\end{proof}

\begin{lemma}
$\cVvdash \forall x(x \in \mathbb{H}\rightarrow\mathcal{D}(x,\dot{D}) \in \mathbb{H})$
\end{lemma}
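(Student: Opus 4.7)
The plan is to reduce the claim to the recursive characterization $\cVvdash \forall z(z\in\mathbb{H}\leftrightarrow KIND(z)\wedge z\subset\mathbb{H})$: fixing $x$ with $\cVvdash x\in\mathbb{H}$, it suffices to establish (a) $\cVvdash KIND(\mathcal{D}(x,\dot{D}))$ and (b) $\cVvdash\mathcal{D}(x,\dot{D})\subset\mathbb{H}$, whereupon the biconditional yields $\cVvdash\mathcal{D}(x,\dot{D})\in\mathbb{H}$ at once.

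For (b) I would first discharge the outer $\mathbf{TT}$ in the manifestation-point clause defining $\mathcal{D}(x,\dot{D})$, via the analogue of the Corollary proved for $\mathcal{D}^{-}$; this shows that every $y\in\mathcal{D}(x,\dot{D})$ is a librationist specification $\langlebar u\in x|\mathrm{T}\mbox{\scriptsize SUB}(s,u)\ranglebar$ from $x$, with $s=\mbox{\scriptsize SUB}(t,w)$ for some parameter $w$ and some $t$ satisfying $Cn(t,\dot{D})$. I would then rewrite such a specification as a capture in the sense of $\mathcal{C}(\cdot,\dot{D},x)$ by coupling $t$ with a $\dot{D}$-restricted identity, namely by invoking the condition-code $t\Godelnum{\frown}\Godelnum{\wedge}\Godelnum{\frown}\Godelnum{v_{\underline{0}}\overset{\dot{D}}{=}v_{\underline{1}}}$; this witnesses $\cVvdash\mathcal{C}(y,\dot{D},x)$. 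Lemma \ref{goodcapture}, which rests on the Skolem--Fraenkel Postulate, then furnishes $\cVvdash y\in\mathbb{H}$, giving (b).

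For (a), kindness follows by adapting the corresponding lemma for $\mathcal{D}^{-}(h)$: once the outer $\mathbf{TT}$ has been discharged, membership in $\mathcal{D}(x,\dot{D})$ is reduced to an arithmetical condition on codes together with specifications taken from the kind bases $x$ and $\dot{D}$. Because $\pounds$ soberly extends the Leibnizian--Russellian theory of identity and Peano arithmetic, and because both $x$ and $\dot{D}$ are kind, one obtains $\cVvdash\forall u(\mathbb{T}u\in\mathcal{D}(x,\dot{D})\vee\mathbb{T}u\notin\mathcal{D}(x,\dot{D}))$, as required by Definition \ref{Kind}.

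The main obstacle I expect is the bookkeeping of the syntactic translation in (b): one must verify that the specification format witnessing membership in $\mathcal{D}(x,\dot{D})$ is literally reexpressible as the capture format required by $\mathcal{C}(\cdot,\dot{D},x)$, that the augmented condition-code still satisfies $Cn(\cdot,\dot{D})$ when the $\dot{D}$-relativised identity is adjoined, and that the $\bleq$-minimality clauses match across the two presentations. This is a routine but delicate verification that uses the coding of Section \ref{Coding} and the substitution operator of Section \ref{substitution}; once discharged, Lemma \ref{goodcapture} closes (b), (a) proceeds as indicated, and the recursive characterization of $\mathbb{H}$ delivers the lemma.
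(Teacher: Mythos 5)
The paper itself gives no argument here: its ``proof'' reads \emph{Exercise.\ SFP.}, so there is nothing to match your sketch against line by line. Your skeleton is certainly the intended one in spirit --- fix $x$ with $\cVvdash x\in\mathbb{H}$, establish $KIND(\mathcal{D}(x,\dot{D}))$ and $\cVvdash\mathcal{D}(x,\dot{D})\subset\mathbb{H}$, and conclude by the characterization $\cVvdash\forall z(z\in\mathbb{H}\leftrightarrow KIND(z)\wedge z\subset\mathbb{H})$, with the Skolem--Fraenkel Postulate entering through the capture lemma stated immediately before. (One small ordering point: discharging the outer $\mathbf{TT}$ in the manifestation-point clause is itself obtained from the $KIND$ result, as in the $\mathcal{D}^{-}$ corollary, so your part (a) must come before, not after, the unpacking you use in (b).)

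The genuine gap is the step you dismiss as ``routine but delicate bookkeeping'': rewriting an arbitrary member of $\mathcal{D}(x,\dot{D})$ as a capture $\mathcal{C}(y,\dot{D},x)$. A member of $\mathcal{D}(x,\dot{D})$ is a specification $\langlebar u\in x\,|\,\mathrm{T}\mbox{\scriptsize SUB}(s,u)\ranglebar$ where $s=\mbox{\scriptsize SUB}(t,\mathcal{D}(x,\dot{D}))$, i.e.\ the condition carries the manifestation point $\mathcal{D}(x,\dot{D})$ itself as a substituted parameter. After this substitution $s$ codes a formula containing the numeral of $\mathcal{D}(x,\dot{D})$, hence the sortifier $\hat{}\,$, so $s$ does not satisfy $Cn(\cdot,\dot{D})$ and cannot serve as the witness in $\mathcal{C}(y,\dot{D},x)$; nor can the witness be taken to be $t$ conjoined with a $\dot{D}$-relativised identity, because the double $\mbox{\scriptsize SUB}$ in the definition of $\mathcal{C}$ only fills the two least noema slots (element and candidate) and leaves no slot for the parameter, while a leftover noema is read nominally, not as that parameter. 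So either you must exhibit a $\mathrm{T}$-free, $\hat{}\,$-free, $\dot{D}$-restricted condition that pins down $\mathcal{D}(x,\dot{D})$ inside the witness --- which is exactly what is not available --- or you must argue the goodness of these parametrized specifications directly, by an SFP-style argument parallel to Lemma \ref{goodcapture} rather than reduced to it (for instance by exploiting that the specification format selects, with $\bleq$-minimality, elements witnessed from the good base $x$, so that $w\subset\mathbb{H}$ and $KIND(w)$ can be argued without passing through $\mathcal{C}$). As it stands, the reduction to Lemma \ref{goodcapture} is the one place where your proposal asserts rather than supplies the needed idea.
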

\begin{proof}[Proof:]
Exercise. SFP.
\end{proof}

\begin{theorem}
$\cVvdash\dot{D}\in\mathbb{H}$
\end{theorem}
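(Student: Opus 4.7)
The goal $\cVvdash\dot{D}\in\mathbb{H}$ unpacks via the fixed point characterization $\cVvdash\forall x(x\in\mathbb{H}\leftrightarrow KIND(x)\wedge x\subset\mathbb{H})$ into two obligations: (i) $\cVvdash KIND(\dot{D})$, already secured by the lemma immediately preceding, and (ii) $\cVvdash\dot{D}\subset\mathbb{H}$. The plan is therefore to concentrate on (ii) and to exploit the minimality encoded in the manifestation-point definition $\cVvdash x\in\dot{D}\leftrightarrow\mathbf{TT}\forall y(S(\emptyset,y,\dot{D})\to x\in y)$. The strategy is to exhibit one Skolem of $\emptyset$ from $\dot{D}$ whose members are already good; by the definition of $F(x,\dot{D})$, every member of $\dot{D}$ then lies in this Skolem and hence in $\mathbb{H}$.

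For the exhibition I would take the librationist specification $w\triangleq\langlebar x\in\dot{D}\,|\,x\in\mathbb{H}\ranglebar$ and verify clause by clause that $\cVvdash S(\emptyset,w,\dot{D})$. Membership of $\emptyset$ is immediate since $\emptyset\in\dot{D}$ by Lemma \ref{lemma3}(i) and $\emptyset$ is trivially good. Closure of $w$ under $\Omega(\cdot,\dot{D})$ combines Lemma \ref{lemma3}(ii) with an adaptation of Lemma \ref{lemma2} showing that $\Omega(z,\dot{D})$ stays kind and well-founded whenever $z$ is good; closure under $\mathcal{D}(\cdot,\dot{D})$ is Lemma \ref{lemma3}(iii) paired with the lemma $\cVvdash\forall x(x\in\mathbb{H}\to\mathcal{D}(x,\dot{D})\in\mathbb{H})$; closure under librationist capture $\mathcal{C}(\cdot,\dot{D},\cdot)$ is Lemma \ref{lemma3}(iv) paired with Lemma \ref{goodcapture}; closure under relative pairing $\{\cdot,\cdot\}^{\dot{D}}$ uses Lemma \ref{lemma3}(v) together with a direct verification that the relativized pair of two goods is good; and closure under $\bigcup$ uses Lemma \ref{lemma3}(vi) combined with preservation of hereditary kindness and well-foundedness under unions, for which the Regularity Rule (Theorem \ref{Regularityrule}) is the crucial ingredient. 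Having secured $S(\emptyset,w,\dot{D})$ one concludes $F(x,\dot{D})\to x\in w\to x\in\mathbb{H}$; feeding this back through $\pounds^{8}_{m}$, $\pounds^{9}_{m}$ and modus ascendens/descendens strips the $\mathbf{TT}$ prefix and delivers $\cVvdash\dot{D}\subset\mathbb{H}$.

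The main obstacle is the goodness side of the closure verifications for the operations that are not directly covered by the preceding lemmas, namely $\Omega(\cdot,\dot{D})$, relative pairing, and union: each needs a careful regularity argument via Theorem \ref{Regularityrule} and the caliber analysis, since the relative pair and the omegation both involve the relativized identity $\overset{\dot{D}}{=}$, and one must check that the resulting $\overset{\dot{D}}{=}$-classes do not smuggle in non-well-founded material. A secondary but routine source of friction is passing between $F(x,\dot{D})$ and $x\in\dot{D}$ across the $\mathbf{TT}$-prefix from the manifestation-point definition. Finally, the whole construction rests on Postulate \ref{SFP}, as must be the case in view of Lemma \ref{postrem1}, which tells us that modulo the Cannon the theorem is in fact equivalent to the Skolem-Fraenkel Postulate, so no proof route can bypass that postulate.
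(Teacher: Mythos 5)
The paper gives no argument here at all: its ``proof'' of this theorem, like that of Lemma \ref{postrem1} and the two lemmas before it, is literally ``Exercise. SFP.'' So there is nothing detailed to compare against; what you have done is supply the omitted exercise, and your route is the natural one and matches the dependencies the paper signals. Splitting the goal via $\cVvdash\forall x(x\in\mathbb{H}\leftrightarrow KIND(x)\wedge x\subset\mathbb{H})$ into $KIND(\dot{D})$ (the preceding lemma) and $\dot{D}\subset\mathbb{H}$, and then discharging the latter by exhibiting a single witness $w$ with $\cVvdash S(\emptyset,w,\dot{D})$ and $w\subset\mathbb{H}$ so that the minimality clause in $F(x,\dot{D})$ forces $x\in w$, is exactly how the manifestation-point definition is meant to be exploited; the goodness-closure clauses are where the SFP-marked lemmas (Lemma \ref{goodcapture} and the $\mathcal{D}(\cdot,\dot{D})$ lemma) and the Regularity Rule enter, and you correctly note via Lemma \ref{postrem1} that no route can avoid the Skolem--Fraenkel Postulate.

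Two points of care. First, your choice of $w\triangleq\langlebar x\in\dot{D}\,|\,x\in\mathbb{H}\ranglebar$ imports an unnecessary complication: librationist specification retains only the $\bleq$-least representative of each identity class, so the literal terms $\Omega(z,\dot{D})$, $\{x,y\}^{\dot{D}}$, $\mathcal{C}$-captures, $\mathcal{D}(z,\dot{D})$ and $\bigcup z$ will in general not themselves be members of $w$; each closure clause of $S(\emptyset,w,\dot{D})$, and the base clause $\emptyset\in w$, then has to be recovered by transferring membership along the Leibnizian identity $\forall u(a\in u\rightarrow b\in u)$ from the selected representative to the term the clause actually mentions. This is repairable, but it is cleaner to take $w=\{x\,|\,x\in\dot{D}\wedge x\in\mathbb{H}\}$ by alethic comprehension: since $\dot{D}$ and $\mathbb{H}$ are both kind, the $\mathbb{T}$ supplied by $\mathrlap{\mathbf{C}}A_{\mathrm{M}}$ strips, and the closure clauses can be verified for the terms themselves. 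Second, the goodness half of the clauses not covered by the stated lemmas --- omegation, relative pairing, union --- is asserted rather than proved in your sketch; you flag this as the main obstacle, which is fair, and it is no worse than the paper's own standard here, but a complete write-up must carry out those regularity/caliber verifications rather than gesture at them.
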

\begin{proof}[Proof:]
Exercise. SFP.
\end{proof}

\begin{theorem}\label{identity} If $A(x)$ is any formula with $x $
free and only $\in$ as its non-logical symbol then $\cVvdash a\overset
{\dot{D}}{=}b\rightarrow(A^{\dot{D}}(a)\rightarrow A^{\dot{D}}(b))$. \end{theorem}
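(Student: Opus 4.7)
The proof proceeds by induction on the structure of $A$, but to make the quantifier step go through cleanly I would first strengthen the statement to its multi-variable form: for every first-order formula $A(x_{1},\ldots,x_{n})$ with only $\in$ as non-logical symbol, and any $\vec{a},\vec{b}$ with $a_{i}\overset{\dot{D}}{=}b_{i}$ for each $i$, $A^{\dot{D}}(\vec{a})\to A^{\dot{D}}(\vec{b})$. The stated theorem is the single-variable instance.

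For atomic $A$ of the form $x_{i}\in x_{j}$ the idea is to reduce the two-sided substitution to two one-sided steps, each witnessed by a set $v\in\dot{D}$ that discriminates $a_{i}\in a_{j}$ from $a_{i}\in b_{j}$, and $a_{i}\in b_{j}$ from $b_{i}\in b_{j}$. By iterated pairing (Lemma \ref{lemma3} (v)) one picks an $h\in\dot{D}$ containing $a_{i},b_{i},a_{j},b_{j}$. Librationist capture of $h$ through $\dot{D}$ with the functional condition $\alpha(u,y)\equiv (a_{i}\in u \wedge u=y)$ (parameter $a_{i}\in\dot{D}$) yields, by Lemma \ref{lemma3} (iv), a $v_{1}\in\dot{D}$ coextensional with $\{u\in h\mid a_{i}\in u\}$; the clause $a_{j}\in v_{1}\to b_{j}\in v_{1}$ of $a_{j}\overset{\dot{D}}{=}b_{j}$ then gives $a_{i}\in a_{j}\to a_{i}\in b_{j}$. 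A symmetric capture with condition $\beta(u,y)\equiv (u\in b_{j}\wedge u=y)$ (parameter $b_{j}\in\dot{D}$) produces $v_{2}\in\dot{D}$ coextensional with $\{u\in h\mid u\in b_{j}\}$, from which $a_{i}\in b_{j}\to b_{i}\in b_{j}$; composition yields the desired $a_{i}\in a_{j}\to b_{i}\in b_{j}$.

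The propositional connectives follow routinely from the inductive hypothesis. For a quantifier $\forall y\,B(y,\vec{x})$ the relativization is $\forall y(y\in\dot{D}\to B^{\dot{D}}(y,\vec{x}))$; fixing an arbitrary $c\in\dot{D}$ one has $c\overset{\dot{D}}{=}c$ trivially, so the inductive hypothesis applied to $B$ with $y$ held as $c$ transports the conclusion from the $\vec{x}$-coordinates at $\vec{a}$ to those at $\vec{b}$, and the existential case is dual.

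The main obstacle is the atomic step, specifically that the witnessing separation $v_{1}$ (respectively $v_{2}$) be a member of $\dot{D}$ itself, not merely of some $\mathcal{D}(h,\dot{D})$, so that the universal quantifier over $v\in\dot{D}$ in the definition of $\overset{\dot{D}}{=}$ actually reaches it. This is precisely what Lemma \ref{lemma3} (iv) delivers, but that closure ultimately rests on the Skolem-Fraenkel Postulate (Postulate \ref{SFP}) via Lemma \ref{goodcapture} and the regularity rule of Theorem \ref{Regularityrule}; once one accepts this internal closure, the rest of the argument is disciplined bookkeeping along the inductive clauses.
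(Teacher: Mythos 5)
Your overall strategy---strengthening to a simultaneous multi-variable statement, inducting on the structure of $A$, and settling the atomic case by producing separating sets inside $\dot{D}$ via librationist capture (closure under which is Lemma \ref{lemma3} (iv), resting through Lemma \ref{goodcapture} on Postulate \ref{SFP})---is the natural route; the paper itself records only ``Exercise. SFP.'' for this theorem, and your reduction of specification to capture with a condition of the form $\alpha(u)\wedge u=y$ is exactly the reduction licensed in section \ref{Aus}. The atomic step, including the two one-sided transfers through sets coextensional with $\{u\in h\mid a_i\in u\}$ and $\{u\in h\mid u\in b_j\}$, is sound in the good contexts guaranteed by $\cVvdash\dot{D}\in\mathbb{H}$.

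There is, however, a concrete gap at the step you call routine. In the definable-echelon section the relation is defined one-directionally: $a\overset{\dot{D}}{=}b$ demands only $a\in\dot{D}\wedge b\in\dot{D}\wedge\forall v(v\in\dot{D}\rightarrow(a\in v\rightarrow b\in v))$, not the biconditional clause used for $\overset{\dot{Z}}{=}$. Your inductive statement transfers truth only from $\vec{a}$ to $\vec{b}$; at a negjunction $\lnot B$ you need the contrapositive $B^{\dot{D}}(\vec{b})\rightarrow B^{\dot{D}}(\vec{a})$, and to obtain it from the induction hypothesis you must swap the roles of $\vec{a}$ and $\vec{b}$, which requires $b_i\overset{\dot{D}}{=}a_i$ --- a symmetry that is neither part of the definition nor established in your sketch (proving the biconditional transfer directly runs into the same need at the backward atomic direction). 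The gap is fillable with machinery you already cite: since $\dot{D}$ is closed under pairing (Lemma \ref{lemma1} (v) and Lemma \ref{lemma3} (v)), the sort $\{a\}^{\dot{D}}=\{x|x\overset{\dot{D}}{=}a\}$ lies in $\dot{D}$; reflexivity of $\overset{\dot{D}}{=}$ together with alethic comprehension (in the kind context given by SFP) yields $a\in\{a\}^{\dot{D}}$, and applying the clause of $a\overset{\dot{D}}{=}b$ to this very witness gives $b\in\{a\}^{\dot{D}}$, i.e.\ $b\overset{\dot{D}}{=}a$. Prove this symmetry lemma first and run the induction on the biconditional form; with that emendation your argument goes through.
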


\begin{proof}[Proof:]
Exercise. SFP. \end{proof}

%\begin{definition}[Bisimulation on $\dot{D}$ with manifestation point]~\\

%$\cVvdash a\bumpeq b\leftrightarrow\mathrm{T}\mathrm{T}(a\in\dot{D}\wedge b\in\dot{D}\wedge\\ \indent\forall x(x\in a\rightarrow\exists y(y\in b\wedge x\bumpeq %y))\wedge\forall x(x\in b\rightarrow\exists y(y\in a\wedge x\bumpeq y))) $
%\end{definition}
%\begin{lemma}
%$\bumpeq\in\mathbb{H}$
%\end{lemma}
%\begin{proof}[Proof:]
%Notice that ``$a\bumpeq b$'' is infix notation for ``$<a,b>\in\bumpeq$''. Exercise. SFP.
%\end{proof}
%\begin{definition}

%$a\dot{\in} b\triangleq \exists x(x\bumpeq a\wedge x\in b)$
%\end{definition}

Let the following be an axiomatization of ZFC, where the identity sign of A7 abrreviates the consequent of A8:

% (Cfr. the weaker system $S$ in section \ref{A Conditional %Sober Librationist Interpretation of ZF}, and %\cite{Friedman73} p. 316.)

\medskip
\bigskip$%
\begin{array}
[c]{ll}%
A1 & (\forall a)(\forall b)(\exists x)(a\in x\wedge b\in x)\\
A2 & (\forall a)(\exists x)(\forall y)(y\in x\leftrightarrow(\exists z)(z\in a\wedge
y\in z))\\
A3 & (\exists x)(\forall y)(y\in x\rightarrow(\exists z)(z\in x\wedge y\in z))\\
A4 & (\forall a)(\exists x)(\forall y)(y\in x\leftrightarrow(y\in a\wedge A(y)))\\
A5 & (\forall x)((\forall y)(y\in x\rightarrow A(y))\rightarrow A(x))\rightarrow(\forall
x)A(x)\\
A6 & (\forall a)((\forall x)(x\in a\rightarrow(\exists y)(A(x,y))\rightarrow(\exists
z)(\forall x)(x\in a\rightarrow(\exists y)(y\in z\wedge A(x,y)))\\
A7 & (\forall a)((\forall b)(\forall c)(b\in a\wedge c\in a\rightarrow((\exists x)(x\in b\wedge x\in c)\leftrightarrow b=c) ))\rightarrow\\
~~  & \hspace{54pt}(\exists b)(\forall c)(c\in a\rightarrow(\exists x)(\forall y)(y=x\leftrightarrow y\in c\wedge y\in b)))\\
A8 & (\forall x)(x\in a\leftrightarrow x\in b)\rightarrow(\forall u)(a\in u\leftrightarrow b\in u)\\
A9 & (\forall a)(\exists y)(\forall x)(x\in y\leftrightarrow x\subset a)
\end{array}
$

\bigskip

\begin{lemma}\label{main lemma}
If A is one of A1 to A7 then $\cVvdash A^{\dot{D}}$.
\end{lemma}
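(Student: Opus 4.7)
The plan is to verify each of the seven relativized axioms in turn, leveraging the closure properties of $\dot{D}$ already established in Lemma \ref{lemma3}, the kindness results of Lemma \ref{lemma4} and its sequel, and the fact that $\cVvdash \dot{D}\in\mathbb{H}$. For A1 the witness for $a,b\in\dot{D}$ is $\{a,b\}^{\dot{D}}$, which is in $\dot{D}$ by Lemma \ref{lemma3}(v) and contains $a$ and $b$ by the definition of the relativized pair. For A2, given $a\in\dot{D}$, the witness is $\bigcup a$ from Lemma \ref{lemma3}(vi); the biconditional is immediate. For A3 take the omegation $\Omega(\emptyset,\dot{D})$ furnished by Lemma \ref{lemma3}(ii): every element of $\Omega(\emptyset,\dot{D})$ has its $\mathcal{S}(\cdot,\dot{D})$-successor still in $\Omega(\emptyset,\dot{D})$, providing the required $z$ for each $y$.

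For A4 (separation), given $a\in\dot{D}$ and a formula $A(y)$ whose only non-logical symbol is $\in$, one forms the librationist capture using the condition $\alpha(y,x)\equiv A^{\dot{D}}(y)\wedge y=x$; the resulting set is extensionally $\{y\in a\mid A^{\dot{D}}(y)\}$, and lies in $\dot{D}$ by the closure under $\mathcal{C}(\cdot,\dot{D},\cdot)$ of Lemma \ref{lemma3}(iv). The crucial point is that because $A$ contains only $\in$ as a non-logical symbol and all quantifiers are relativized to $\dot{D}$, its code satisfies $Cn(\Godelnum{A},\dot{D})$ of Definition \ref{Cnd}, so the domination/capture definitions apply. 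For A6 (replacement) one proceeds identically, now using the functional formula $A(x,y)$ to build the capture; the condition that $A$ be functional restricted to $\dot{D}$ lets librationist capture reduce, as observed in Section \ref{Aus}, to ordinary replacement on the range. For A7 (Zermelo's disjoint-family form of choice), apply Proposition \ref{capturechoice}: the set $\{x\mid\exists y(y\in a\wedge x\in y\wedge\forall z(z\in y\rightarrow x\bleq z))\}$ is a librationist capture living in $\dot{D}$ by Lemma \ref{lemma3}(iv), and it picks the $\bleq$-least element from each member of $a$; the disjointness hypothesis ensures that identity on $a$ coincides with $\overset{\dot{D}}{=}$ on the distinct representatives.

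For A5 ($\in$-induction), suppose $\cVvdash(\forall x\in\dot{D})((\forall y\in\dot{D})(y\in x\rightarrow A^{\dot{D}}(y))\rightarrow A^{\dot{D}}(x))$, and consider the good set $b=\langlebar x\in\dot{D}\mid\lnot A^{\dot{D}}(x)\ranglebar$. By Theorem \ref{Regularityrule}, if $b$ were inhabited it would contain a $\in$-minimal element, contradicting the induction hypothesis; hence $\cVvdash(\forall x\in\dot{D})A^{\dot{D}}(x)$. This appeal uses that $\dot{D}\in\mathbb{H}$ so that specification via capture indeed yields a good set and Theorem \ref{Regularityrule} applies. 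The main obstacle, I expect, is not any single axiom but the bookkeeping required for A4, A6 and A7: one must package the meta-level first-order formula $A$ as an object-level code inhabiting the right $Cn$- or $Cnd$-class, keep careful track of parameters (so that the dependence of the capture on them is through the manifestation-point parameter in $\mathcal{D}$), and verify that substitution of identicals under $\overset{\dot{D}}{=}$ preserves the captured condition, which is handled by Theorem \ref{identity}. Everything else is routine given the closure lemmas and the Skolem-Fraenkel Postulate securing $\dot{D}\in\mathbb{H}$.
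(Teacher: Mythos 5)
Your handling of A1, A2, A3, A4, A5 and A7 tracks the paper's own proof: relativized pairing, union, $\Omega(\emptyset,\dot{D})$, librationist specification/capture for separation, the Regularity Rule applied to the good set $\{x\in\dot{D}\mid\lnot A^{\dot{D}}(x)\}$ for $\in$-induction, and Proposition \ref{capturechoice} for A7. Those cases are fine and essentially identical in route to the paper.

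The gap is in A6. A6 is the \emph{collection} schema: its hypothesis is only $(\forall x)(x\in a\rightarrow(\exists y)A(x,y))$, with no functionality assumption, and Friedman's interpretation of $ZF$ in $S$ needs collection, not just replacement. Your argument ``using the functional formula $A(x,y)$ \ldots the condition that $A$ be functional restricted to $\dot{D}$ lets librationist capture reduce \ldots to ordinary replacement on the range'' assumes a hypothesis A6 does not give you, and it also inverts the point made in Section \ref{Aus} and Section \ref{definable echelon}: librationist capture is \emph{stronger} than ordinary capture precisely because its built-in $\bleq$-least-witness clause manufactures functionality where none is assumed, which is what yields collection. The correct move (the paper's) is: given $\cVvdash\forall x(x\in a\rightarrow(\exists y)A^{\dot{D}}(x,y))$, take
$w=\{w\mid\exists v(v\in a\wedge\mathrm{T}\Godelnum{A^{\dot{D}}(\dot{v},\dot{w})}\wedge\forall y(\mathrm{T}\Godelnum{A^{\dot{D}}(\dot{v},\dot{y})}\rightarrow w\bleq y))\}$,
the librationist capture of $a$ via $A^{\dot{D}}$; for each $v\in a$ it contains the $\bleq$-least witness $y$ with $A^{\dot{D}}(v,y)$, so $\cVvdash\forall x(x\in a\rightarrow(\exists y)(y\in w\wedge A^{\dot{D}}(x,y)))$, and $w$ is good by Lemma \ref{goodcapture} (under SFP), hence in $\dot{D}$, giving $A6^{\dot{D}}$ for arbitrary, not necessarily functional, $A$. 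As written, your proof establishes only the functional (replacement-style) instances of A6, which is not enough for the lemma as stated.
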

\begin{proof}[Proof:]

SFP. $A1^{\dot{D}}$ holds as $\dot{D}$ is closed under $\{a,b\}^{\dot{D}}$. $A2^{\dot{D}}$ holds as $\dot{D}$ is closed under union. $A3^{\dot{D}}$ holds as $\Omega(\overset{\dot{D}}{\emptyset},\dot{D})\in\dot{D}$. $A4^{\dot{D}}$ holds as $\dot{D}$ is closed under $\langlebar y\in a| A(y)^{\dot{D}}\ranglebar\in\dot{D}$. $A5^{D}$ holds as $\{x\in\dot{D}|\lnot A(x)^{D}\}\in\mathbb{H}$ for any first order condition $A(x)$ given the Cannon (\ref{Cannon}) and as\\ 

\indent$\cVvdash(\forall x)((\forall y)(y\in x\rightarrow y\notin\{x\in\dot{D}|\lnot A(x)^{D}\})\rightarrow x\notin\{x\in\dot{D}|\lnot A(x)^{D}\})\\
\indent\hspace{178pt}\rightarrow(\forall x)(x\notin\{x\in\dot{D}|\lnot A(x)^{D}\}$))\\ 

\noindent given the Regularity Rule (Theorem \ref{Regularityrule}). $A6^{D}$ holds as by theorem \ref{goodcapture} if $a\in\mathbb{H}$ and $\mathcal{C}(w,\dot{D},a)$ then $w\in\mathbb{H}$. For suppose $\cVvdash\forall x(x\in a\rightarrow(\exists y) A^{\dot{D}}(x,y))$ and let $w=\{w|\exists v(v \in a \wedge \mathrm{T}(\Godelnum{A^{\dot{D}}(\dot{v},\dot{w})})\wedge\forall y(\mathrm{T}(\Godelnum{A^{\dot{D}}(\dot{v},\dot{y})})\rightarrow w\bleq y))  \}$; it is then clear that $\cVvdash\forall x(x\in a\rightarrow(\exists y)(y\in w\wedge A^{\dot{D}}(x,y))$; so $\cVvdash(\forall a)(a\in\dot{D}\rightarrow(\forall x)(x\in a\rightarrow(\exists y)(y\in\dot{D}\wedge A^{\dot{D}}(x,y))))\rightarrow(\exists
z)(z\in\dot{D}\wedge(\forall x)(x\in a\rightarrow(\exists y)(y\in z\wedge A^{\dot{D}}(x,y)))$. $A7^{D}$ holds as for any given $a$ satisfying the antecedent we may (cfr. section \ref{Aus}) use librationist capture and $\{x|\exists y(y\in a \wedge x\in y \wedge (\forall z)(z\in y\rightarrow x\bleq z)))\}$ then serves as the required choice set. \end{proof}

\begin{lemma}[Dominationpotency]\label{dominationpower}~\\

\smallskip

\indent$\cVvdash\forall x,y(x\in\dot{D}\wedge y\in\dot{D}\rightarrow( x\subset y\leftrightarrow\exists z(\forall w(w\in z\leftrightarrow w\in x)\wedge z\in\mathcal{D}(y,\dot{D}))))$
\end{lemma}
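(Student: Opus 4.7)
The plan is to establish the biconditional under the working hypothesis $x,y\in\dot{D}$, with the universal closure then following by the $Z$-regulation. The backward direction falls out directly from the shape of members of $\mathcal{D}(y,\dot{D})$, while the forward direction requires producing an explicit witness.

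For the backward direction, suppose $z\in\mathcal{D}(y,\dot{D})$ and $\forall w(w\in z\leftrightarrow w\in x)$. By the manifestation-point defining clause together with the definition of the domination requirement $D(z,\cdot,y,\dot{D})$, the set $z$ has the form $\langlebar u\in y\mid\mathrm{T}\mbox{\scriptsize SUB}(s,u)\ranglebar$ for some $s$. Unfolding the librationist semantic specification shorthand, every member of such a set lies in $y$, so $z\subset y$; coupled with the coextensionality clause, this yields $x\subset y$, since any $w\in x$ is then in $z$ and hence in $y$.

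For the forward direction, assume $x\subset y$ and exhibit the witness explicitly. Take $t=\Godelnum{v_{\underline{1}}\in v_{\underline{0}}}$ and put $s=\mbox{\scriptsize SUB}(t,x)$, which codes $v_{\underline{1}}\in x$. The candidate is $z=\langlebar u\in y\mid\mathrm{T}\mbox{\scriptsize SUB}(s,u)\ranglebar$. A second substitution gives $\mbox{\scriptsize SUB}(s,u)=\Godelnum{u\in x}$, so by the truth prescription $\mathfrak{T}_{\mathrm{M}}$ together with $\pounds^{8}_{m}$ and $\pounds^{9}_{m}$ the membership condition collapses to $u\in y\wedge u\in x$. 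Under $x\subset y$ this makes $z$ coextensional with $x$. To certify $z\in\mathcal{D}(y,\dot{D})$ one reads off the defining clause $D(z,x,y,\dot{D})$ with the chosen $s,t$: the atomic code $t$ contains no abstraction sign, no $\mathrm{T}$, no \euro, no quantifier and no joint, its noemata are exactly $v_{\underline{0}}$ and $v_{\underline{1}}$, and the parameter $x\in\dot{D}$ is given.

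The main obstacle I anticipate is bookkeeping rather than conceptual: verifying that $t$ genuinely satisfies $Cn(t,\dot{D})$, that $\Godelnum{v_{\underline{0}}}$ and $\Godelnum{v_{\underline{1}}}$ lie in $\dot{D}$ (which should follow from $\dot{D}\supseteq\Omega(\emptyset,\dot{D})$ via closure under the Fraenkel operations, since the noema codes are finite ordinals), and that the iterated substitution $\mbox{\scriptsize SUB}(\mbox{\scriptsize SUB}(t,x),u)$ really delivers $\Godelnum{u\in x}$. These verifications reduce to the arithmetisation of syntax developed in Sections \ref{Coding} and \ref{substitution}.
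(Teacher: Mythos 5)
The paper itself offers no proof of this lemma: it is left as an exercise with the hint ``let $z=\langlebar u\in x| u\in y\ranglebar$'', so there is no official argument to match yours against. Your choice of witness, the trace of $x$ over $y$ rather than the hint's trace of $y$ over $x$, is in fact the shape that the definition of $\mathcal{D}(y,\dot{D})$ demands, since its members are by the manifestation-point clause sets of the form $\langlebar u\in y|\mathrm{T}\mbox{\scriptsize SUB}(s,u)\ranglebar$, i.e.\ specifications over the domain $y$; given the strongly non-extensional identity of \pounds, the hint's specification over $x$ would need an extra argument to be seen as a member of $\mathcal{D}(y,\dot{D})$ at all. So the overall strategy (backward direction by unfolding, forward direction by exhibiting the trace) is sound and the right decomposition.

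There is, however, a genuine gap in your certification that the witness lies in $\mathcal{D}(y,\dot{D})$. The defining clause is $z\in\mathcal{D}(y,\dot{D})\leftrightarrow\mathbf{TT}D(z,\mathcal{D}(y,\dot{D}),y,\dot{D})$, and in $D(x,y,h,z)$ the code substituted for the least noema of $t$ is the \emph{second} argument, which the manifestation point fixes to be $\mathcal{D}(y,\dot{D})$ itself: the clause reads $s=\mbox{\scriptsize SUB}(t,\mathcal{D}(y,\dot{D}))$, not $s=\mbox{\scriptsize SUB}(t,x)$ for a freely chosen $x$. So taking $t=\Godelnum{v_{\underline{1}}\in v_{\underline{0}}}$ and $s=\mbox{\scriptsize SUB}(t,x)$ does not instantiate the definition; with that $t$ the forced substitution would in fact produce the condition $u\in\mathcal{D}(y,\dot{D})$. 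The parameter $x$ has to enter the condition in the way the framework actually provides, namely as a noema occurring in $t$ (in the Z-regulation proof of the universal closure $x$ and $y$ are noemata, so $u\in x$ is a legitimate first-order condition with $Cn(t,\dot{D})$), and then the {\scriptsize SUB} bookkeeping must be redone: the two substitutions always hit the two least-index noemata present, so you need a harmless occurrence of a low-index noema to absorb the compulsory $\mathcal{D}(y,\dot{D})$-substitution, and you must treat the collision cases where $x$ or $y$ is itself one of those low-index noemata. Two smaller points: the collapse of $\mathrm{T}\Godelnum{\dot{u}\in\dot{x}}$ via $\pounds^{8}_{m}$, $\pounds^{9}_{m}$ is only maximal when the membership conditions involved are kind, so to conclude a $\cVvdash$-claim you should invoke the kindness/goodness facts about $\dot{D}$ rather than the minor prescriptions bare; and the librationist specification carries the $\bleq$-minimality clause, so literal coextensionality with $x$ requires a short identity-substitution argument (instantiating $a=b\triangleq\forall u(a\in u\rightarrow b\in u)$ at the witness sort), not an outright reduction of the membership condition to $u\in y\wedge u\in x$.
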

\begin{exercise}
Prove Lemma \ref{dominationpower}. Hint: let $z=\langlebar u\in x| u\in y\ranglebar$.
\end{exercise}
\begin{corollary}[Weak power]\label{weak power}~\\

$\cVvdash\forall u(u\in\dot{D}\rightarrow\exists x(x\in\dot{D}\wedge\forall y(y\in\dot{D}\rightarrow\exists z(z\in x\wedge \forall w(w\in z\leftrightarrow w\in y\wedge w\in u)))))$
\end{corollary}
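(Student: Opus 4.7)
The natural candidate for the outer witness is $x=\mathcal{D}(u,\dot{D})$, the domination of $u$ relative to $\dot{D}$. Fix $u\in\dot{D}$; by Lemma \ref{lemma3} (iii), the class $\{x\mid F(x,\dot{D})\}$ is closed under the operator $\mathcal{D}(-,\dot{D})$, and hence $x=\mathcal{D}(u,\dot{D})\in\dot{D}$. So $x$ satisfies the first conjunct of the existential.

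Now let $y\in\dot{D}$ be arbitrary. The strategy is to produce an element of $x$ coextensional with the ``intersection'' $y\cap u$. First I would form the librationist specification $x'=\langlebar w\in u\mid w\in y\ranglebar$. This is a librationist capture of $u$ from $\dot{D}$ with the parameter $y\in\dot{D}$, so by Lemma \ref{goodcapture} (invoking the Skolem--Fraenkel Postulate \ref{SFP}) together with the closure clause for librationist capture built into Definition \ref{def12}, we have $x'\in\dot{D}$ (and indeed $x'\in\mathbb{H}$). By construction every member of $x'$ lies in $u$, so $x'\subset u$, and $x'$ is coextensional with the class $\{w\mid w\in y\wedge w\in u\}$.

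Next apply Lemma \ref{dominationpower} (Dominationpotency) to $x'$ and $u$: since $x',u\in\dot{D}$ and $x'\subset u$, there is $z\in\mathcal{D}(u,\dot{D})=x$ with $\forall w(w\in z\leftrightarrow w\in x')$. Chaining this biconditional with the coextensionality of $x'$ and $y\cap u$ yields $\forall w(w\in z\leftrightarrow w\in y\wedge w\in u)$, so $z$ is the sought-after witness inside $x$. Discharging the quantifiers over $u$ and $y$, and using $R_Z$ (Z-regulation) together with modus ascendens to promote the resulting maximal theses to a universally quantified maxim, gives the corollary.

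\textbf{Main obstacle.} The only delicate point is certifying that the librationist specification $x'$ really lies in $\dot{D}$. One has to check that the formula ``$v_{\underline{0}}\in y$'' meets the coding requirement $Cn(\cdot,\dot{D})$ of Definition \ref{Cnd} (so it qualifies as a first-order condition bounded by $\dot{D}$) and that $y$ enters as an admissible parameter for the capture clause $\forall w,z(w\in y\wedge\mathcal{C}(z,u,w)\to z\in y)$ in the Skolem condition. Once that bookkeeping is absorbed, the remainder of the argument is a direct chaining of Lemma \ref{lemma3}, Lemma \ref{goodcapture} and Lemma \ref{dominationpower}.
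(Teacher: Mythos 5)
Your proposal is correct and follows exactly the route the paper intends: the corollary is left as an immediate consequence of Lemma \ref{dominationpower}, with $\mathcal{D}(u,\dot{D})$ as the weak power of $u$ and, for each $y\in\dot{D}$, a $\dot{D}$-member coextensional with $y\cap u$ (obtained by specification/capture closure of $\dot{D}$, as the paper itself does for $A4^{\dot{D}}$) fed into Dominationpotency to land the witness $z$ in $\mathcal{D}(u,\dot{D})$. The only cosmetic point is that the detour through Lemma \ref{goodcapture} and $\mathbb{H}$ is unnecessary: what is needed is just $x'\in\dot{D}$, which the capture clause of Definition \ref{def12} (Lemma \ref{lemma3} iv) already gives.
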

\begin{theorem}
\pounds \ interprets ZF.
\end{theorem}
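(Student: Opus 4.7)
The plan is to reduce the interpretation of ZF to Friedman's interpretation of ZF in a theory that lacks extensionality and has only a weak form of power set, exploiting what we have already proved about $\dot{D}$. By Lemma \ref{main lemma}, axioms A1--A7 hold when relativized to $\dot{D}$, delivering pairing, union, infinity, separation, foundation, collection (and choice) within $\dot{D}$. By Corollary \ref{weak power}, a weak power-set principle also holds in $\dot{D}$: for every $u \in \dot{D}$ there is $x \in \dot{D}$ containing, for each $y \in \dot{D}$, some $z$ coextensional with $y \cap u$. These principles are exactly the axioms of the system $S$ of \cite{Friedman73}.

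First I would recall Friedman's translation. Let $\mathrm{EQR}(a)$ abbreviate the statement that $a$ is a set of (Kuratowskian) pairs witnessing a bisimulation on its field, $a \sim b \triangleq \exists x(\mathrm{EQR}(x) \wedge \langle a,b \rangle \in x)$, and $a \in^{*} b \triangleq \exists x(x \sim a \wedge x \in b)$. For any formula $A$ of the language of ZF with $\in$ as sole non-logical symbol, let $A^{*}$ denote the result of replacing each occurrence of $\in$ with $\in^{*}$. Friedman's Theorem 1 then tells us that $ZF \vdash A$ implies $S \vdash A^{*}$, and this translation handles extensionality in the target via the bisimulation quotient $\sim$, which is precisely why only a weak power-set axiom is needed in $S$.

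Next I would verify that $\dot{D}$ satisfies each axiom of $S$ when $\in$ is read as the restriction of $\in$ to $\dot{D}$: pairing, union, infinity, separation, foundation, and collection are immediate from Lemma \ref{main lemma}, while Friedman's weak power axiom is supplied by Corollary \ref{weak power}. Theorem \ref{identity} ensures that librationist identity $\overset{\dot{D}}{=}$ substitutes freely into formulas whose only non-logical symbol is $\in$, so the proof-theoretic machinery of $S$ lifts to the relativization without hitch; here the Skolem--Fraenkel Postulate \ref{SFP} together with Cannon \ref{Cannon} is tacitly in force, as in the preceding lemmas.

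Finally, I would conclude: given $ZF \vdash A$ we obtain $S \vdash A^{*}$, and by the previous step $\cVvdash A^{* \dot{D}}$. The composite translation $A \mapsto A^{* \dot{D}}$ is then the desired interpretation of ZF in \pounds. The main obstacle is the control of extensionality: $\dot{D}$ itself does not literally satisfy the ZF extensionality axiom, so one must route through Friedman's $\sim$-quotient instead of translating $\in$ directly, and this re-routing is precisely what lets us get by with the weak power principle of Corollary \ref{weak power} rather than a full power-set operation inside $\dot{D}$.
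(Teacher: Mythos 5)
Your proposal is correct and follows essentially the same route as the paper: the paper's proof likewise combines Lemma \ref{main lemma} (A1--A7 relativized to $\dot{D}$) with Corollary \ref{weak power} to conclude that \pounds\ interprets Friedman's system $S$, and then invokes Theorem 1 of \cite{Friedman73} to pass from $S$ to ZF. Your elaboration of the $\sim$-quotient and $\in^{*}$ translation simply spells out the content of Friedman's theorem that the paper leaves implicit.
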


\begin{proof}[Proof:]
By Corollary \ref{weak power} and Lemma \ref{main lemma}, \pounds \ interprets system S of \cite{Friedman73} which by its Theorem 1 interprets ZF.
\end{proof}

\begin{theorem}
	\pounds \ interprets ZFC.
\end{theorem}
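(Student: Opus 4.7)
The plan is to mimic the proof of the preceding theorem that \pounds \ interprets ZF, extending Friedman's translation $\phi \mapsto \phi^{*}$ to cover the axiom of choice. First I would adjoin to Friedman's system $S$ the axiom A7 (call the result $S^{+}$); since Lemma \ref{main lemma} shows $\cVvdash A_i^{\dot{D}}$ for all $i = 1, \ldots, 7$ and Corollary \ref{weak power} delivers the weak power axiom of $S$, we have $\cVvdash \phi^{\dot{D}}$ whenever $\phi$ is an axiom of $S^{+}$. Hence \pounds \ interprets $S^{+}$ just as it interprets $S$.

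Second, I would adapt Friedman's proof of his Theorem 1 to show that every theorem of ZFC is, under his $\in^{*}$-translation, a theorem of $S^{+}$. Friedman already handles A1 through A6 together with A8 (extensionality) by this route; only the translated choice axiom $(\mathrm{AC})^{*}$ needs fresh verification. Because A7 as stated in the excerpt uses the Leibnizian--Russellian identity (the consequent of A8) rather than an \emph{ad hoc} equality predicate, and because the produced choice set is pinned down up to $\sim$ by the ``$\bleq$-least element'' clause, the image of $(\mathrm{AC})^{*}$ under Friedman's translation should follow from A7 by a $\sim$-closure argument parallel to Friedman's treatment of replacement. Combining the two steps, if $ZFC \vdash \phi$ then $S^{+} \vdash \phi^{*}$, whence $\cVvdash (\phi^{*})^{\dot{D}}$; so \pounds \ soberly interprets ZFC.

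The main obstacle will be verifying that $(\mathrm{AC})^{*}$ is derivable in $S^{+}$, i.e.\ that A7's choice clause is $\sim$-invariant and so survives the replacement of $\in$ by $\in^{*}$. Concretely one must check that membership in the produced choice set is a congruence with respect to Friedman's equivalence $\sim$; this is where the order $\bleq$ becomes crucial, since its definition by means of the countable enumeration \euro \ is already $\sim$-invariant via the enumeration prescripts \mbox{\euro}$^{3}_{\mathrm{M}}$--\mbox{\euro}$^{4}_{\mathrm{M}}$. Once the congruence is in hand the remainder is a routine recapitulation of Friedman's argument with A7 as an additional axiom, together with an appeal to Theorem \ref{identity} for the preservation of A8's role in the translation.
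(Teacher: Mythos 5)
Your overall shape matches the paper's: the paper also extends the Friedman-style interpretation used for ZF and reduces everything to verifying the translated axiom of choice, with the crucial point being $\sim$-invariance of the choice set. But there is a genuine gap in how you propose to discharge that point. You place the verification of $(\mathrm{AC})^{*}$ inside the formal system $S^{+}=S+A7$, and then justify the needed congruence by appeal to $\bleq$, the enumeration \mbox{\euro}\ and the prescripts $\mbox{\euro}^{3}_{\mathrm{M}}$--$\mbox{\euro}^{4}_{\mathrm{M}}$. Those resources are not in the language of $S^{+}$ at all: $S$ is a pure $\in$-theory without extensionality, without A8, and without any order or enumeration, so no ``$\bleq$-least element'' clause is available to pin the choice set down, and the substitutivity of the defined identity in contexts $u\in a$ (which is exactly what the congruence argument needs) is not derivable there. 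Whether $S^{+}\vdash(\mathrm{AC})^{*}$ at all is precisely the delicate issue with choice in extensionality-free set theories, and your sketch does not supply a derivation; it silently imports librationist machinery into a purely formal system where it does not exist.

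The paper avoids this level confusion by carrying out the verification inside \pounds\ itself, relativized to $\dot{D}$: it states choice in Zermelo's multiplicative form for $\in^{*}$, pulls the hypotheses back from $\in^{*}$ to $\in$, applies $A7^{\dot{D}}$ (Lemma \ref{main lemma}), and then transfers the conclusion forward using Friedman's Lemmas 14 and 19 together with Theorem \ref{identity}, which supplies the substitutivity of $\overset{\dot{D}}{=}$ in $\in$-formulas and itself depends on the Skolem-Fraenkel Postulate \ref{SFP}. The $\sim$-respecting witness is not an abstract choice set at all but the explicit librationist capture $\{x|\exists y(y\in c\land\mathrm{T}\Godelnum{\dot{x}\in\dot{y}}\land\forall z(\mathrm{T}\Godelnum{\dot{x}\in\dot{y}}\to z\bleq x))\}$, adapting Proposition \ref{capturechoice}; this is where $\bleq$ legitimately enters, since it is a term of \pounds, not of $S^{+}$. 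It also matters there that $\cVvdash\dot{D}\in\mathbb{H}$ upgrades the hypotheses from theses to maxims so that classical reasoning applies, and that the whole argument is flagged as SFP-dependent; your proposal omits both points, and its closing claim of a \emph{sober} interpretation is stronger than what the theorem asserts. To repair your route you would either have to prove the nontrivial metatheorem that $S^{+}$ alone derives $(\mathrm{AC})^{*}$, or relocate the choice verification into \pounds\ as the paper does, at which point the detour through $S^{+}$ adds nothing.
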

\begin{proof}[Proof:]
Our strategy will here be to extend the interpretation  invoked in the proof of the previous theorem. Whereas \cite{Friedman73} uses the identity sign to abbreviate coextensionality, use e.g. $\bumpeq$ if needed in unravelling to interpret. Adapting Definition 3 in \cite{Friedman73} let $EQR(a)$ abbreviate\smallskip\ 

$\forall x,y(\langle x,y\rangle \in a\rightarrow(\forall z\in x)(\exists w\in y)(\langle z,w\rangle \in a)$

$\qquad\wedge(\forall w\in x)(\exists z\in y)(\langle z,w\rangle \in a))\smallskip$

$a\sim b$ abbreviate $(\exists x)(x\in\dot{D}\wedge EQR(x)\wedge\langle a,b\rangle \in x)\smallskip$

$a\in^{\ast}b$ abbreviate $\exists x(x\sim a\wedge x\in b)\smallskip$.

In the following we establish the standard Zermelian version, dubbed ``the multiplicative axiom'' by Russell and others, for $\in^{\ast}$ of the axiom of choice which is equivalent with our A7 above under the other principles of $ZF$: We assume that a set c is such that $\cvdash c\in\dot{D}\land\forall a(a \in^{\ast} c \to \exists x(x \in^{\ast} a))$ and $\cvdash c\in\dot{D}\land\forall a \forall b(a \in^{\ast} c\wedge b \in^{\ast} c \land \exists x(x \in^{\ast} a \land x \in^{\ast} b) \to (\forall y)(y\in\dot{D}\rightarrow (a\in^{\ast}y \leftrightarrow b\in^{\ast}y)).$ As $\cVvdash\dot{D}\in\mathbb{H}$, then $\cVvdash c\in\dot{D}\land\forall a(a \in^{\ast} c \to \exists x(x \in^{\ast} a))$ and $\cVvdash c\in\dot{D}\land\forall a \forall b(a \in^{\ast} c\wedge b \in^{\ast} c \land \exists x(x \in^{\ast} a \land x \in^{\ast} b) \to (\forall y)(y\in\dot{D}\rightarrow (a\in^{\ast}y \leftrightarrow b\in^{\ast}y)).$ Obviously $a\in c$ entails $a\in^{\ast}c$, and obviously $\exists x(x \in^{\ast} a)$ entails $\exists x(x \in a)$, so $\cVvdash\forall a(a \in c \to \exists x(x \in a))$. Clearly $\cVvdash a \in c\wedge b \in c \land \exists x(x \in a \land x \in b)$ entails $\cVvdash a \in^{\ast} c\wedge b \in^{\ast} c \land \exists x(x \in^{\ast} a \land x \in^{\ast} b)$, and we observe that $\cVvdash(\forall y)(y\in\dot{D}\rightarrow (a\in^{\ast}y \leftrightarrow b\in^{\ast}y))$ entails $\cVvdash(\forall y)(y\in\dot{D}\rightarrow (a\in y \leftrightarrow b\in y))$. This establishes that $\cVvdash\forall a(a \in c \to \exists x(x \in a))$ and $\cVvdash\forall a \forall b(a \in c\wedge b \in c \land \exists x(x \in a \land x \in b) \to (\forall y)(y\in\dot{D}\rightarrow (a\in y \leftrightarrow b\in y)).$ By the multiplicative axiom for $\in$, i.e. also A7, $\cVvdash\exists d(d\in\dot{D}\land\forall a(a \in c \to \exists x(x \in a \land x \in d))\land
\forall a(a \in c \to \forall x \forall y (x \in a \land x \in d \land y \in a \land y \in d \to x\overset{\dot{D}}{=} y))).$ Let $e$ be a wittness and consider first $\cVvdash\forall a(a \in c \to \exists x(x \in a \land x \in e))$. Suppose $\cVvdash b\in^{\ast}c$. Then for some $a$, $a\sim b$, $\cVvdash a\in c$ and thus $\cVvdash(\exists x)(x \in a \land x \in e))$ and thus also $\cVvdash(\exists x)(x \in^{\ast} a \land x \in^{\ast} e))$. By Lemma 14 of \cite{Friedman73} $\cVvdash a\sim b\leftrightarrow(\forall x)(x\in^{\ast}a\leftrightarrow x\in^{\ast} b)$. Thence $\cVvdash(\exists x)(x \in^{\ast} b \land x \in^{\ast} e)$. So $\cVvdash\forall a(a \in^{\ast} c \to \exists x(x \in^{\ast} a \land x \in^{\ast} e))$. Consider next $\cVvdash\forall a(a \in c \to \forall x \forall y (x \in a \land x \in e \land y \in a \land y \in e \to x\overset{\dot{D}}{=} y))).$ Suppose $\cVvdash b\in^{\ast}c$.
Then for some $a$, $\cVvdash a\sim b$ and $\cVvdash a\in c$. So $\cVvdash(\forall x) (\forall y) (x \in a \land x \in e \land y \in a \land y \in e \to x\overset{\dot{D}}{=} y)).$ Suppose $\cVvdash f\in^{\ast}a\land f\in^{\ast}e\land g\in^{\ast}a\land g\in^{\ast}e$. Then for some $h$ and $i$, $\cVvdash h\sim f\land h\in a\land h\in e$ and $\cVvdash i\sim g\land i\in a\land i\in e$. Thence $\cVvdash h\overset{\dot{D}}{=}i$. Given Theorem \ref{identity}, $\cVvdash f\sim g$. By adapting Lemma 19 of \cite{Friedman73}, $\cVvdash f\overset{\dot{D}}{=} g$. So $\cVvdash\forall u(u\in\dot{D}\to (f\in^{\ast}u\leftrightarrow g\in^{\ast}u))$. Consequently, $\cVvdash\forall a(a \in^{\ast} c \to \forall x \forall y (x \in^{\ast} a \land x \in^{\ast} e \land y \in^{\ast} a \land y \in^{\ast} e \to \forall u(u\in\dot{D}\to (f\in^{\ast}u\leftrightarrow g\in^{\ast}u)))).$ As $\cVvdash c\in \dot{D}$, $\cVvdash \{x|\exists y(y\in c\land\mathrm{T}\Godelnum{\dot{x}\in\dot{y}}\land\forall z(\mathrm{T}\Godelnum{\dot{x}\in\dot{y}}\to z\bleq x))\}\in\dot{D}$. By adapting Proposition \ref{capturechoice} $\{x|\exists y(y\in c\land\mathrm{T}\Godelnum{\dot{x}\in\dot{y}}\land\forall z(\mathrm{T}\Godelnum{\dot{x}\in\dot{y}}\to z\bleq x))\}$ can stand in for $e$ so that $\cVvdash(\exists d)(d\in\dot{D}\land\forall a(a \in^{\ast} c \to \exists x(x \in^{\ast} a \land x \in^{\ast} d))\land\forall a(a \in^{\ast} c \to \forall x \forall y (x \in^{\ast} a \land x \in^{\ast} d \land y \in^{\ast} a \land y \in^{\ast} d \to \forall u(u\in\dot{D}\to (f\in^{\ast}u\leftrightarrow g\in^{\ast}u)))))$. SFP.\end{proof}

\medskip

\section{Climbing  Mahlo Cardinals }

With manifestation points we can straightforwardly ascend relatively uncountable and inaccessible cardinals to the point that all sets are members of a Grothendieck universe and to the level of the first hyper inaccessible cardinals when presupposing further postulates as the Skolem-Fraenkel Postulate. In the following we climb much higher, and in the presentation we at points are somewhat repetitive as regards notational matters.

 We will show how we may start an ascent. Recall Definition \ref{def12}. Let $B(u,v)$ be $\exists p,q(u=<p,q>\hspace{-1pt}\wedge\hspace{1pt} q=\{z|\forall y(S(\emptyset,y,v)\wedge\forall w(w\in p\rightarrow \exists r(<w,r>\in v\wedge r\in y))\rightarrow z\in y)\}$.
$IN$ be the manifestation point of $B(u,v)$ so $\forall x,y(<x,y>\in IN \leftrightarrow \mathbf{TT}B(<x,y>,IN))$. As identity is kind-preserving, we have $\cVvdash\forall x,y(<x,y>\in IN \leftrightarrow B(<x,y>,IN))$.  Let $IN(p)$ abbreviate  $\{z|\forall y(S(\emptyset,y,IN)\wedge\forall w(w\in p\rightarrow \exists r(<w,r>\in IN\wedge r\in y))\rightarrow z\in y)\}$. %Let $H(x,u)$ (\textit{u is a hyper of x}) be $\forall y(S(\emptyset,y,u)\wedge\forall z(z\in y\rightarrow IN(z)\in y)\rightarrow x\in y)$. We finally take $\dot{H}$ as the manifestation point of $H(x,u)$ and with some work we are able to show that $\cVvdash\forall x(x\in \dot{H}\leftrightarrow H(x,\dot{H})\leftrightarrow \forall y(S(\emptyset,y,\dot{H})\wedge\forall z(z\in y\rightarrow IN(z)\in y)\rightarrow x\in y))$.

%As above, let $KIND(x)\triangleq \forall y(\mathbf{T}y\in x\vee\mathbf{T}y\notin x)$ and $\mathbb{H}$ be the manifestation point of the formula $KIND(x)\wedge x\subset y$ so that we have $\vDash_{\mathrm{M}}\forall x(x\in\mathbb{H}\leftrightarrow\mathbf{TT} (KIND(x)\wedge x\subset\mathbb{H})).$ One should notice that $\nvDash_{\mathrm{M}}KIND(\mathbb{H})$. However, if $\vDash_{\mathrm{M}}a\in\mathbb{H}$ then $a$ is KIND and iterative and hereditarily KIND and iterative. \cite{Frode2012a} establishes that $\vDash_{\mathrm{M}}\omega\in\mathbb{H}$ and that $\mathbb{H}$ is closed under the Jensen rudimentary functions. Importantly, the ordinary power set operation is notoriously paradoxical in \pounds \ for sets or properties $b$ such that $\nvDash_{\mathrm{M}}\{x|x=x\}\subset b$ (see section 7 of [1]). Moreover, $\mathbb{H}$ is highly non-extensional. It is essential that $\mathbb{H}$ is closed under the notions of *domination* and *librationist capture* pointed to above, and it is these features that help us isolate sets in $\mathbb{H}$ which under supplementary postulates interpret ZFC and beyond; Skolem-relative uncountability is induced for domination in the context of $\mathbb{H}$.

\begin{theorem}[Transfinite Recursion on $\mathbb{H}$]

If $F:\mathbb{H}\to \mathbb{H}$ and $A$ is a kind subset of $\mathbb{H}$ then there is a function $f$ such that $\cVvdash\forall x,y(<x,y>\in f\leftrightarrow \mathbf{TT}(x\in A\wedge y=F(\{<u,v>|<u,v>\in f\wedge u\in x\}))$.
\end{theorem}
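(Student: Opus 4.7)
The plan is to construct $f$ as a manifestation point of its own defining condition, and then verify functionality by an $\in$-induction on $A$ licensed by the Regularity Rule for $\mathbb{H}$ (Theorem \ref{Regularityrule}).

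First I would, taking $A$ (and the defining predicate of $F$) as parameters, let $\phi(w,g)$ abbreviate $\exists x,y\,(w=\langle x,y\rangle\wedge x\in A\wedge y=F(\{\langle u,v\rangle\mid\langle u,v\rangle\in g\wedge u\in x\}))$. By the manifestation-point construction of Section \ref{manifestation point}, adapted to allow parameter vectors as explicitly permitted there, I obtain a term $f$ such that $\cVvdash\forall w\,(w\in f\leftrightarrow\mathbf{TT}\phi(w,f))$.

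Next I would unpack the existential: using uniqueness of Kuratowskian pair projections together with the Leibnizian-Russellian identity theory of Section \ref{The semantics of pounds}, the instance at $w=\langle x,y\rangle$ yields $\cVvdash\forall x,y\,(\langle x,y\rangle\in f\leftrightarrow\mathbf{TT}(x\in A\wedge y=F(\{\langle u,v\rangle\mid\langle u,v\rangle\in f\wedge u\in x\})))$, which is precisely the biconditional asserted by the theorem. The forward direction collapses $\mathbf{TT}\exists x',y'(\langle x,y\rangle=\langle x',y'\rangle\wedge\cdots)$ at $x,y$ via pair uniqueness, while the reverse direction supplies witnesses directly to the existential.

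Finally, to justify calling $f$ a function, I would perform $\in$-induction on $A$, legitimate since $A\subset\mathbb{H}$ is kind and the Regularity Rule holds on $\mathbb{H}$. Assuming inductively that for every $u\in x$ with $u\in A$ there is a unique $v$ with $\langle u,v\rangle\in f$, the set $\{\langle u,v\rangle\mid\langle u,v\rangle\in f\wedge u\in x\}$ is uniquely determined up to coextensionality as a kind subset of $\mathbb{H}$, and since $F$ sends kind members of $\mathbb{H}$ to kind members of $\mathbb{H}$, $y=F(\ldots)$ is uniquely fixed, giving the sole $v$ paired with $x$.

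The main obstacle will be controlling the double $\mathbf{T}$ produced by the manifestation-point construction: one must verify that these do not obstruct the inductive step. Because the arguments and values involved are hereditarily kind members of $\mathbb{H}$, $\pounds^{9}_{m}$ and $\pounds^{8}_{m}$ together with the Regularity Rule let me absorb the outer $\mathbf{TT}$ into plain assertions in the relevant kind scope, producing the functional equation $\langle x,F(\{\langle u,v\rangle\mid\langle u,v\rangle\in f\wedge u\in x\})\rangle\in f$ as a maxim for each $x\in A$.
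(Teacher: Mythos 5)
Your construction of $f$ as a manifestation point (with parameters, as licensed in the section on manifestation points) of the defining condition is exactly the paper's proof, which consists solely of the instruction to take the appropriate manifestation point. Your further unpacking of the Kuratowskian pairs and the $\in$-induction argument for functionality go beyond what the paper records (and what the stated biconditional strictly requires), but the core approach is the same.
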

\begin{proof}[Proof:]
Take the appropriate manifestation point.
\end{proof}
\begin{corollary}
With such an $f$: $\cVvdash\hspace{2pt} <a,b>\hspace{1pt}\in\hspace{-2pt} f\Leftrightarrow  \ \cVvdash a\in A\wedge b=F(f\upharpoonright a)$.
\end{corollary}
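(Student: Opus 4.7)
The plan is to instantiate the universally quantified biconditional supplied by the Transfinite Recursion Theorem and then unpack the iterated alethizor $\mathbf{TT}$ using the librationist regulations governing maxims.

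First, the Transfinite Recursion Theorem delivers
$$\cVvdash \forall x,y\bigl(\langle x,y\rangle\in f \leftrightarrow \mathbf{TT}\bigl(x\in A \wedge y=F(\{\langle u,v\rangle \mid \langle u,v\rangle\in f \wedge u\in x\})\bigr)\bigr).$$
Instantiating via $\pounds^{vi}_\mathrm{M}$ and applying modus maximus ($R_1$) at $x:=a$, $y:=b$, and identifying $f\upharpoonright a$ with $\{\langle u,v\rangle \mid \langle u,v\rangle\in f \wedge u\in a\}$ by definition of restriction, I obtain the maxim
$$\cVvdash \langle a,b\rangle\in f \leftrightarrow \mathbf{TT}\bigl(a\in A \wedge b=F(f\upharpoonright a)\bigr).$$
Since the biconditional is a maxim, both its implicational directions are maxims as well.

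For the forward direction ($\Rightarrow$), assume $\cVvdash \langle a,b\rangle\in f$. Modus maximus on the left-to-right direction yields $\cVvdash \mathbf{TT}(a\in A \wedge b=F(f\upharpoonright a))$. Applying $R_6$ (modus descendens maximus) twice strips the two occurrences of the alethizor and gives $\cVvdash a\in A \wedge b=F(f\upharpoonright a)$. For the reverse direction ($\Leftarrow$), assume $\cVvdash a\in A \wedge b=F(f\upharpoonright a)$. By $R_4$ (modus ascendens maximus) applied twice, $\cVvdash \mathbf{TT}(a\in A \wedge b=F(f\upharpoonright a))$. Modus maximus on the right-to-left direction of the biconditional then delivers $\cVvdash \langle a,b\rangle\in f$.

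The only delicate point is the treatment of $\mathbf{TT}$: one must use $R_4$ and $R_6$ rather than the minor principles $\pounds^8_m$ and $\pounds^9_m$ alone, since only the former guarantee preservation of maxim-status (recall that $\mathbb{T}A\leftrightarrow A$ need not itself be a maxim for arbitrary $A$). Both applications are legal here because modus ascendens maximus and modus descendens maximus apply uniformly to any maxim, so the double application needed to traverse $\mathbf{TT}$ poses no difficulty.
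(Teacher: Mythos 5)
Your proposal is correct, and since the paper states this corollary without an explicit proof, your derivation is precisely the routine argument it intends: instantiate the maxim biconditional from the Transfinite Recursion Theorem via $\pounds^{vi}_\mathrm{M}$ and $R_1$, then traverse the two occurrences of the truth operator with modus ascendens maximus ($R_4$) and modus descendens maximus ($R_6$). Your closing remark that the maximal regulations, rather than the minor prescriptions $\pounds^8_m$ and $\pounds^9_m$, are what guarantee preservation of maxim-status is exactly the right point of care.
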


We now define relative Mahlo-cardinals. Let $Tr^{2}(x)$ signify that $x$ is a transitive set and that all members of x are transitive and let $ORD=\{x|x\in \mathbb{H}\wedge Tr^{2}(x)\}$. Let $\mathbf{IN}=\{x|\exists y(y\in ORD\wedge x=\{z|z\in IN(y)\wedge Tr^{2}(z)\}\}$. Let $\texttt{F}(f)$ express that $f$ is a function, and let $\texttt{D}(f)$ denote the domain of $f$. $\texttt{NF}(x,f)$ stands for `f is a normal function on the ordinals in x', i.e more precisely: $$f\in\mathcal{D}(x^{2})\wedge \texttt{F}(f)\wedge \texttt{D}(f)=x\wedge(\delta\prec\eta\prec x\rightarrow f(\delta)\prec f(\eta)\prec x)\wedge (Lim(\gamma)\rightarrow f(\gamma)=\bigcup_{\xi\prec\gamma}f(\xi)$$

\begin{theorem}

 With manifestation point we define  function $C$:\\

 \noindent $\cVvdash<\alpha,\beta, \Xi>\in C\leftrightarrow\mathbf{TT}(<\alpha,\beta>\in Ord^{2}\wedge((\lnot\exists x(x\in\alpha)\wedge\lnot\exists x(x\in\beta)\rightarrow\Xi=\mathbf{IN})\wedge\\ 
\forall \kappa(\beta=\kappa +1\rightarrow  \Xi=\{\lambda|\exists\Psi(<\alpha,\kappa,\Psi>\in C\wedge\lambda\in\Psi\wedge\forall f(\texttt{NF}(\lambda,f)\rightarrow \exists\gamma(\gamma\in\Psi\wedge \gamma=f(\gamma))))\})\wedge \\  {\footnotesize \forall\kappa(\alpha=\kappa+1\rightarrow\Xi=\{\lambda|\exists\Psi(<\kappa,\beta,\Psi>\in C\wedge\lambda\in\Psi\wedge\forall\delta(\delta\prec\lambda\rightarrow \exists\Upsilon(<\kappa,\beta+\delta,\Upsilon>\in C\wedge\lambda\in\Upsilon)))\})\wedge}\\ (Lim(\alpha)\rightarrow\Xi=\{\lambda|\exists\delta\exists\Delta(\delta\prec\alpha\wedge<\delta,\beta,\Delta>\in C\wedge\lambda\in\Delta)\}) \wedge \\ (Lim(\beta)\rightarrow\Xi=\{\lambda|\exists\delta\exists\Delta(\delta\prec\beta\wedge<\alpha,\delta,\Delta>\in C\wedge\lambda\in\Delta)\})))$

\end{theorem}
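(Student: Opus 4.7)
The strategy is to read the statement as an instance of the manifestation-point construction that produced $IN$ just above and that is justified by the Theorem of Section \ref{manifestation point}. Let $\Phi(u,v)$ be the formula in noemata $u$ and $v$ obtained by writing
$$\Phi(u,v) \triangleq \exists \alpha,\beta,\Xi(u= <\alpha,\beta,\Xi>\wedge <\alpha,\beta>\in Ord^{2}\wedge \Theta(\alpha,\beta,\Xi,v)),$$
where $\Theta(\alpha,\beta,\Xi,v)$ is the conjunction of the five case-clauses appearing in the theorem, with every occurrence of $C$ replaced by the noema $v$. Each clause is first order in $v$ (the set-abstracts on the right-hand sides refer only to the bound variables, to $v$, and to the components of the outer triple), so $\Phi$ is a bona fide formula with $\{u,v\}$ as its noemata. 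Applying the manifestation-point theorem then yields a term $C$ with
$$\cVvdash\forall u(u\in C\leftrightarrow \mathbf{TT}\Phi(u,C)).$$

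Specialising $u$ to the triple $<\alpha,\beta,\Xi>$ and pulling the case structure back out of the existential binder gives precisely the claimed biconditional, since kindness of identity (used already in obtaining $\forall x,y(<x,y>\in IN\leftrightarrow B(<x,y>,IN))$ for $IN$) lets us match the outer pair component-by-component. The redistribution of $\mathbf{TT}$ across the inner conjunctions and implications is the same routine one already used to state the defining biconditional of $IN$ without the double truth prefix, and it is supported by $\pounds^{1}_{\mathrm{M}}$--$\pounds^{7}_{\mathrm{M}}$ together with modus ascendens and descendens; I would, however, leave the $\mathbf{TT}$ explicit as in the statement, since the collapse to a plain biconditional is not needed for the uses to which $C$ is put later.

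The main obstacle is not logical but formal: one must make sure $\Theta$ really does have $v$ as its only ``recursive'' noema, so that the manifestation-point theorem applies, and that no hidden parameters sneak in through the clauses $\lambda=f(\gamma)$ or $\beta+\delta$ (the ordinal arithmetic and the normal-function predicate $\texttt{NF}(\lambda,f)$ must be read as terms or formulas in the ambient language of $\mathcal{D}(x^{2})$, not as fresh schemata). Once that bookkeeping is in place, the construction produces $C$ as a fixed point of the operator $v\mapsto\{u\mid\Phi(u,v)\}$, and the fact that the $\alpha$-successor clause references $C$ at indices $<\kappa,\beta+\delta,\Upsilon>$ with $\beta+\delta$ possibly exceeding $\beta$ poses no difficulty because we are only claiming the biconditional, not well-foundedness of the induced lookup relation; any justification that $C$ in fact climbs the Mahlo hierarchy in the intended way is deferred, as with $IN$, to the analyses that follow.
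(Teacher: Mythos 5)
Your proposal is correct and matches the paper's own (largely tacit) justification: the theorem is obtained exactly as for $IN$ and for Transfinite Recursion on $\mathbb{H}$, namely by taking the appropriate manifestation point of the formula with the triple pattern-matched by an existential and the recursion noema in place of $C$, then instantiating and using the kindness of identity for the bookkeeping. Your added remarks on keeping $\mathbf{TT}$ explicit and on not needing well-foundedness of the lookup relation are consistent with how the paper uses $C$ afterwards.
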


\begin{corollary}~\\

 \noindent $\cVvdash<\alpha,\beta, \Xi>\in C\Leftrightarrow \ \cVvdash<\alpha,\beta>\in Ord^{2}\wedge((\lnot\exists x(x\in\alpha)\wedge\lnot\exists x(x\in\beta)\rightarrow\Xi=\mathbf{IN})\wedge\\ 
\forall \kappa(\beta=\kappa +1\rightarrow  \Xi=\{\lambda|\exists\Psi(<\alpha,\kappa,\Psi>\in C\wedge\lambda\in\Psi\wedge\forall f(\texttt{NF}(\lambda,f)\rightarrow \exists\gamma(\gamma\in\Psi\wedge \gamma=f(\gamma))))\})\wedge \\ {\small   \forall\kappa(\alpha=\kappa+1\rightarrow\Xi=\{\lambda|\exists\Psi(<\kappa,\beta,\Psi>\in C\wedge\lambda\in\Psi\wedge\forall\delta(\delta\prec\lambda\rightarrow \exists\Upsilon(<\kappa,\beta+\delta,\Upsilon>\in C\wedge\lambda\in\Upsilon)))\})\wedge} \\ (Lim(\alpha)\rightarrow\Xi=\{\lambda|\exists\delta\exists\Delta(\delta\prec\alpha\wedge<\delta,\beta,\Delta>\in C\wedge\lambda\in\Delta)\}) \wedge \\ (Lim(\beta)\rightarrow\Xi=\{\lambda|\exists\delta\exists\Delta(\delta\prec\beta\wedge<\alpha,\delta,\Delta>\in C\wedge\lambda\in\Delta)\})))$
\end{corollary}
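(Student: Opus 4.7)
The approach will mirror the argument used earlier to pass from the manifestation-point definition $\cVvdash\forall x(x \in \mathcal{D}^{-}(h) \leftrightarrow \mathbf{TT}D^{-}(x,\mathcal{D}^{-}(h),h))$ to its corollary $\cVvdash\forall x(x \in \mathcal{D}^{-}(h) \leftrightarrow D^{-}(x,\mathcal{D}^{-}(h),h))$. The engine in both cases is that for maxims we have $\cVvdash A \Leftrightarrow \hspace{2pt}\cVvdash \mathbb{T}A$, via modus ascendens maximus $R_{4}$ and modus descendens maximus $R_{6}$, so in the maxim regime the double alethic operator $\mathbf{TT}$ can be stripped from both sides of the object-level biconditional. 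Consequently the corollary is really just a maxim-level reading of the preceding theorem, and no new structural insight about $C$ is required.

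Concretely, I would argue as follows. Assume first that $\cVvdash <\alpha,\beta,\Xi>\in C$. By the preceding theorem we then have $\cVvdash \mathbf{TT}(\Phi)$, where $\Phi$ abbreviates the long conjunction on the right-hand side of the theorem. Applying $R_{6}$ twice yields $\cVvdash \Phi$. For the reverse direction assume $\cVvdash \Phi$; applying $R_{4}$ twice gives $\cVvdash \mathbf{TT}(\Phi)$, and the theorem then delivers $\cVvdash <\alpha,\beta,\Xi>\in C$. Thus $\cVvdash <\alpha,\beta,\Xi>\in C \Leftrightarrow \hspace{2pt} \cVvdash \Phi$, which is precisely the content of the corollary.

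A methodological remark that would accompany the argument: for the passage via $R_{4}$ and $R_{6}$ to be legitimate in the way used, one needs to know that the relevant instances of membership in $C$ behave as kind statements, i.e.\ that $C$ is itself kind on the triples over $\mathrm{Ord}^{2}$. This is analogous to the lemma $\cVvdash\forall x (KIND(\mathcal{D}^{-}(x)))$ which justified the earlier corollary, and it follows here from the fact that \pounds \ soberly extends classical identity theory and Peano arithmetic and from the arithmetical/ordinal-recursive shape of the defining clauses of $C$ (each clause is expressible using only kind predicates on ordinals, such as $\mathrm{Ord}$, successor, $\mathrm{Lim}$, and previously obtained values of $C$).

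The main obstacle I anticipate is not the $\mathbf{TT}$-stripping itself — which is essentially bookkeeping with $R_{4}$ and $R_{6}$ — but rather confirming that the recursive manifestation-point definition of $C$ is well-posed enough that one is entitled to treat its clauses as kind at each triple. Because the definiens mentions $C$ itself inside $\exists \Psi,\Upsilon,\Delta$ quantifiers ranging over previous stages, one should check that the induction on $\fat{<}\alpha,\beta\fat{>}$ in $\mathrm{Ord}^{2}$ stratifies the definition so that at each $<\alpha,\beta>$ only already kind earlier values are invoked; the Regularity Rule for $\mathbb{H}$ (Theorem \ref{Regularityrule}) together with Lemma \ref{lemma2}-style ordinal induction then transports kindness up through successor and limit cases, securing the hypothesis needed to apply $R_{4}$/$R_{6}$ uniformly.
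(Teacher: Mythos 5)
Your proposal is correct and is essentially the paper's own (implicit) route: the corollary is stated as an immediate consequence of the preceding theorem, the intended argument being exactly your $\mathbf{TT}$-stripping — from $\cVvdash <\alpha,\beta,\Xi>\in C$ the theorem's maxim biconditional together with \textit{modus maximus} ($R_{1}$) yields $\cVvdash\mathbf{TT}\Phi$, then \textit{modus descendens maximus} ($R_{6}$) applied twice yields $\cVvdash\Phi$, and conversely $R_{4}$ twice plus $R_{1}$ recovers $\cVvdash <\alpha,\beta,\Xi>\in C$. One clarification: your kindness caveat is superfluous for this corollary, since $R_{4}$ and $R_{6}$ apply to arbitrary maxims with no kindness hypothesis; kindness of the defining condition is only needed when one wants the object-language biconditional with the $\mathbf{TT}$ erased (as in the paper's remark that identity is kind-preserving in the treatment of $IN$), not for the present metalinguistic equivalence between two $\cVvdash$-claims.
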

\vspace{5pt}

We state the Mahlo-Postulate:

$$\cVvdash<\alpha,\beta>\in Ord^{2}\Rightarrow \ \cVvdash\exists x\exists\Psi(<\alpha,\beta,\Psi>\in C\wedge x\in\Psi)$$

With \pounds \ plus the Mahlo-Postulate we can define a manifestation point analogous to $\dot{D}$ above, and with appropriate postulates we achieve that the manifestation point is KIND and hereditarily KIND.

Some places in the literature it is suggested that somewhere around Mahlo is the limit for how far we can build up inaccessible cardinals from below. One may ask how far further one can press on with manifestation points such as here. Suggestions are welcome, but it seems clear to the author that we at least cannot go beyond indescribable cardinals.

\section{Conclusion}

We suggest that \pounds \ justifies useful set theoretic and mathematical principles more appropriately than alternative accounts, and the definable echelon $\dot{D}$ may serve as an attractive arena for definable mathematical analysis. We are somewhat interested in how far \pounds \ can account for higher infinities in a relative sense complying with that which was revealed by my compatriot Thoralf Skolem, but the author wants to underline that he is not convinced that venturing into the higher reaches of set theory as traditionally conceived in the end will turn out as edifying. The rationale for this doubt is the fact that \pounds \ is negjunction complete so that it is far from clear that questions concerning definable real numbers which we can ask at the quite low low and uncontroversially countable end of the definable hierarchy needs such higher reaches to be settled. It bears repetition that \pounds \ by itself does not even commit to the consistency of ZFC, and the lower reaches of the definable hierarchy needs little strength beyond \pounds. 

We speculate on whether and if so how how category theory may best be thought of librationistically. We are also concerned with a more general quest as to how we best can think of concrete crowds (properties), queues (instances of relations), crowds of queues (relations) and individuals with an extension of \pounds \ that lets ut think about them in a type free, adicity liberal and order eased manner. Such an extended librationist \textit{theory of properties} should integrate with modal logics by identifying the evaluations of the evaluation semantics set out in \cite{Frode2012b} with alternative veridicality predicates from the actual truth predicate $\mathrm{T}$, and the accessibility relations between such veridicality predicates should themselves as the veridicality predicates be members of the domain of the theory. We envision that such a librationist account of modal logics may overcome such limitations on modalities taken syntactically as pointed out by Richard Montague in \cite{Montague1963} and by others. There are, I think, other important fundamental questions that may be addressed advantageously if librationistically.

\medskip
\medskip
\medskip

\noindent
Universitetet i Oslo \ \ \ \ \ \ \ \ \ \ \ \ \ \ \ \ \ \ \ \ \ \ \ \ \ \ \ Universidade Federal do Rio Grande do Norte

\newpage 
\bibliographystyle{apalike}
\bibliography{ref}

\newpage

\end{document}